\renewcommand{\hat}{\widehat}
\newcommand{\R}{\mathbf{R}}
\newcommand{\C}{\mathbf{C}}
\newcommand{\Q}{\mathbf{Q}}
\newcommand{\Z}{\mathbf{Z}}
\newcommand{\N}{\mathbf{N}}
\newcommand{\A}{\mathbf{A}}
\newcommand{\K}{\mathbf{K}}
\renewcommand{\P}{\mathbf{P}}
\newcommand{\m}{\mathfrak{m}}
\newcommand{\geom}{\hspace{.3em}\dot{\bigwedge}\hspace{.3em}}
\newcommand{\G}{\mathbb G}
\newcommand{\T}{\mathbb T}
\newcommand{\D}{\mathbb D}
\newcommand{\HH}{\mathbb H}
\newcommand{\CS}{\mathbb S}
\renewcommand{\div}{\operatorname{div}}
\DeclareMathOperator{\Isom}{Isom}
\DeclareMathOperator{\Div}{Div}
\DeclareMathOperator{\Gal}{Gal}
\DeclareMathOperator{\ord}{ord}
\newcommand{\OO}{\mathcal O}
\DeclareMathOperator{\id}{id}
\DeclareMathOperator{\Tr}{Tr}
\DeclareMathOperator{\Sing}{Sing}
\DeclareMathOperator{\fin}{f}
\DeclareMathOperator{\Supp}{Supp}
\DeclareMathOperator{\dist}{dist}
\renewcommand{\mod}{\text{mod}}
\DeclareMathOperator{\NS}{NS}
\DeclareMathOperator{\Nef}{Nef}
\DeclareMathOperator{\Per}{Per}
\DeclareMathOperator{\spec}{Spec}
\DeclareMathOperator{\Proj}{Proj}
\DeclareMathOperator{\Aut}{Aut}
\DeclareMathOperator{\SL}{SL}
\DeclareMathOperator{\GL}{GL}
\DeclareMathOperator{\PSL}{PSL}
\DeclareMathOperator{\PGL}{PGL}
\DeclareMathOperator{\Out}{Out}
\DeclareMathOperator{\Teich}{Teich}
\DeclareMathOperator{\Mod}{Mod}
\DeclareMathOperator{\DF}{DF}
\DeclareMathOperator{\QF}{QF}
\DeclareMathOperator{\Bers}{Bers}
\newcommand{\DivInf}{\Div_\infty}
\newcommand{\hatDivInf}{\hat{\Div}_\infty}
\DeclareMathOperator{\an}{an}
\newcommand{\dd}{\mathop{\mathrm{{}d}}\mathopen{}}
\newtheorem{thm}{Theorem}[section]
\newtheorem{thm*}{Theorem}
\newtheorem{bigthm}{Theorem}
\newtheorem{prop}[thm]{Proposition}
\newtheorem{cor}[thm]{Corollary}
\newtheorem{lemme}[thm]{Lemma}
\newtheorem{claim}[thm]{Claim}
\theoremstyle{definition}
\newtheorem{dfn}[thm]{Definition}
\newtheorem{rmq}[thm]{Remark}
  \setlist[enumerate]{label={(\roman*)}}
  \setlist[enumerate,1]{label={(\arabic*)}}
  \setlist[enumerate,1]{label={(\arabic*)}}
  \setlist[enumerate,1]{label={(\arabic*)}}
\begin{document}
\author{Marc Abboud \\ Université de Neuchâtel}
\address{Marc Abboud, Institut de mathématiques, Université de Neuchâtel
\\ Rue Emile-Argand 11 CH-2000 Neuchâtel}
\email{marc.abboud@normalesup.org}
\title{Unlikely intersections problem for automorphisms of Markov surfaces}
\subjclass[2000]{37F10, 37P05, 37P55, 32H50, 57M50}
\keywords{Unlikely intersections, Character varieties, Markov surfaces, Adelic divisors}
\thanks{The author acknowledge support by the Swiss National Science Foundation Grant “Birational transformations of higher dimensional varieties” 200020-214999.}
\maketitle

\begin{abstract}
  We study a problem of unlikely intersections for automorphisms of Markov surfaces of positive entropy. We show for certain
  parameters that two automorphisms with positive entropy share a Zariski dense set of periodic points if and only if
  they share a common iterate. Our proof uses arithmetic equidistribution for adelic line bundles over quasiprojective
  varieties, the theory of laminar currents and quasi-Fuchsian representation theory.
\end{abstract}
\tableofcontents
\section{Introduction}
The Markov surface $\cM_D$ of parameter $D \in \C$ is the affine subvariety of $\C^3$ defined by the equation
\begin{equation}
  x^2 + y^2 + z^2 = xyz + D.
  \label{<+label+>}
\end{equation}

This family of surfaces has been heavily studied as they appear in different areas of mathematics (see
\cite{cantatBersHenonPainleve2009} or \cite[\S2]{rebeloDynamicsGroupsAutomorphisms2022}). We study the dynamics
of polynomial automorphisms of $\cM_D$, that is polynomial transformations of the ambient space $\C^3$ that preserve
$\cM_D$ and are invertible there. A loxodromic
automorphism $f$ is an automorphism with first dynamical degree $\lambda_1 > 1$. Here the first dynamical degree is defined
as follows. Let $X$ be a \emph{compactification} of $\cM_D$, that is a projective surface with an embedding $\cM_D \hookrightarrow
X$ as an open dense subset and let $H$ be an ample Cartier divisor on $X$, then
\begin{equation}
  \lambda_1 (f) := \lim_{n \rightarrow \infty} \left( (f^n)^* H \cdot H \right)^{1/n}
  \label{<+label+>}
\end{equation}
where $\cdot$ is the intersection product on divisors and $(f^n)^*$ is the pull back operator induced by $f^n$ on the
Néron-Severi group of $X$. One shows that this definition does not depend on the choice of $X$ nor $H$. Alternatively,
it is known (see Theorem \ref{ThmAutMarkov}) that the topological entropy of $f$ is equal to $h_{\text{top}}(f) = \log
\lambda_1 (f)$. Therefore, loxodromic automorphisms are the one with positive entropy.

\subsection{Unlikely intersections problem for special parameters}
The main result of this paper is as follows.
\begin{bigthm}\label{BigThmRigidity}
  Let $D = 0$ or $D = 2 - 2 \cos (\frac{\pi}{q})$ with $q \in \Z_{\geq 2}$. If $f$ and $g$ are two loxodromic
  automorphisms of $\cM_D$, then the following statements are equivalent.
  \begin{enumerate}
    \item \label{item:per-zar-dense} $\Per(f) \cap \Per(g)$ is Zariski dense.
    \item \label{item:per-equal} $\Per(f) = \Per (g)$.
    \item \label{item:common-iterates} $\exists N,M \in \Z \setminus \left\{ 0 \right\}, f^N = g^M$.
  \end{enumerate}
\end{bigthm}
Notice that the implication $\eqref{item:common-iterates} \Rightarrow \eqref{item:per-zar-dense}$ follows from
\cite{xiePeriodicPointsBirational2015} which states in
particular that any loxodromic automorphisms has a Zariski dense set of periodic points.
This type of results are called \emph{unlikely intersection} problems in the literature. The equivalence
\eqref{item:per-zar-dense} $\Leftrightarrow$ \eqref{item:per-equal} of
Theorem \ref{BigThmRigidity} was first established by
Baker and DeMarco for endomorphisms of $\P^1$ in \cite{bakerPreperiodicPointsUnlikely2011} and for polarized endomorphisms
of projective varieties in characteristic zero by Yuan and Zhang in \cite{yuanArithmeticHodgeIndex2017} and
\cite{yuanArithmeticHodgeIndex2021}. The
first instance of this result for non-projective varieties is due to Dujardin and
Favre who showed Theorem \ref{BigThmRigidity} for Hénon maps over a number field in
\cite{dujardinDynamicalManinMumford2017}. It is believed that one could lower
the hypothesis of the theorem requiring only $\Per (f) \cap \Per (g)$ to be infinite (see
\cite[Theorem D and Conjecture 3]{dujardinDynamicalManinMumford2017}). In
\cite{cantatFiniteOrbitsLarge2020}, Cantat and Dujardin showed a similar result for subgroups of
automorphisms of projective surfaces: If $\Gamma \subset \Aut(X)$ is a large subgroup of automorphisms of a projective
surface, then $\Gamma$ cannot have a Zariski dense set of finite orbits unless $X$ is a Kummer example, that is the
quotient of an abelian surface by a finite group.

Our proof of Theorem \ref{BigThmRigidity} follows the ideas of the proof in \cite{dujardinDynamicalManinMumford2017}.
However, the existence of the surface $\cM_4$ shows that Theorem \ref{BigThmRigidity} cannot hold for any
parameter $D$ (see \S \ref{SubSecParam4} and \S \ref{SubSecGeneralParameter}). Hence, the proof of
\cite{dujardinDynamicalManinMumford2017} cannot
work for the surfaces $\cM_D$ without a new kind of argument. To finish the proof, we use the interpretation of the
family of surfaces $\cM_D$ given by the Character varieties of the punctured torus and results from hyperbolic geometry
(see \S \ref{SecRepresentationTheory}).

\subsection{Green functions}
Let $f \in \Aut (\cM_D)$ be a loxodromic automorphism. One crucial step in the proof of Theorem \ref{BigThmRigidity} is
the construction of the \emph{Green functions} of $f$ (this is done in \S \ref{SecInvariantAdelicDivisor}). In fact we
need to construct them over $\C$ but also over
non-archimedean fields. We stick to $\C$ for this introduction.

We have that $\cM_D (\C) \subset \C^3$, the Green functions $G^+_f, G^-_f$ are defined as follows
\begin{equation}
  G_f^\pm (p) = \lim_n \frac{1}{\lambda_1(f)^n} \log^+ \left(\left\|  f^{\pm n} (p)  \right\|\right).
  \label{EqDefGreenFunctions}
\end{equation}
We have the following properties (see \cite{bedfordPolynomialDiffeomorphismsC21991}).
\begin{enumerate}
  \item $G_f^+$ is well defined, continuous, nonnegative and plurisubharmonic over $\cM_D(\C)$,
  \item $G_f^+ \circ f = \lambda_1(f) G_f^+$,
  \item $G_f^+ (p) = 0$ if and only if the forward orbit $(f^N(p))_{N \geq 0}$ is bounded.
\end{enumerate}
The function $G_f^-$ satisfies similar properties. We define the Green currents $T_f^+ = dd^c G_f^+$ and $T_f^- =
dd^c G_f^-$. These are positive closed $(1,1)$-currents over $\cM_D(\C)$ and the measure
\begin{equation}
  \mu_f := T_f^+ \wedge T_f^-
\end{equation}
is well defined. It is of finite mass, thus we can suppose
that it is a probability measure. We call it the \emph{equilibrium measure} of $f$. It is $f$-invariant and its support
is called the \emph{Julia set} of $f$. It is contained in the \emph{generalised} Julia set of $f$ which is the compact
$f$-invariant subset $\left\{ G^+_f = G^-_f = 0 \right\}$. The construction is explained for example in
\cite{cantatBersHenonPainleve2009} and \cite{girandDynamicalGreenFunctions2014}.

\subsection{Arithmetic equidistribution}
If $D$ is algebraic, the construction of the Green functions of $f$ can be done over any complete
field $\K_v$ such that $\Q(D) \hookrightarrow \K_v$. This defines an \emph{adelic divisor} over
$\cM_D$ (see \S \ref{SectionAdelicDivisor}) in the sense of \cite{yuanAdelicLineBundles2023}. Yuan and Zhang's
arithmetic equidistribution theorem from \emph{loc.cit} states that the Galois orbits of any generic sequence
of periodic points equidistributes with respect to $\mu_{v}$. This will imply that if $f$ and $g$ share a Zariski dense
set of periodic points, then they have the same equilibrium measure over any complete algebraically closed field.

The last part of the proof uses Representation theory to show that over $\C$ every saddle periodic point is in the
support of $\mu_{f}$ and one can construct a specific saddle fixed point $q(f) \in \cM_D (\C)$ that must have a
non-compact orbit under $g$ if $f$ and $g$ do not share a commmon iterate.

\subsection{The Picard parameter $D = 4$}\label{SubSecParam4}
The parameter $D=4$ is very special because of the following. There is a $2:1$ cover of $\cM_4$ by the
algebraic torus $\G_m^2 = \C^\times \times \C^\times$ given by
\begin{equation}
 \eta: (u,v) \in \G_m^2 \mapsto \left( u + \frac{1}{u}, v + \frac{1}{v}, uv + \frac{1}{uv} \right) \in \cM_4.
  \label{<+label+>}
\end{equation}
If $\sigma$ is the involution on $\G_m^2$ given by $\sigma(u,v) = \left( u^{-1}, v^{-1} \right)$,
then $\cM_4$ is the quotient $\eta$ is the quotient map. The action of the automorphism group is very explicit for the
Picard parameter $D=4$. Indeed, $\GL_2 (\Z)$ acts on $\G_m^2$ by monomial transformations:
\begin{equation}
  \begin{pmatrix}a & b\\
  c & d\end{pmatrix} \cdot (u,v) = \left( u^a v^b, u^c v^d \right).
  \label{<+label+>}
\end{equation}
We have that $\sigma$ corresponds to the action of the matrix $-\id$ and $\GL_2 (\Z) / \langle \sigma \rangle = \PGL_2
(\Z)$ acts on $\cM_4 = \G_m^2 / \langle \sigma
\rangle$. Up to finite index, all the automorphisms of $\cM_4$ are of this form (see Theorem \ref{ThmAutMarkov}). Thus,
all dynamical problems on $\cM_4$ can be lifted to $\G_m^2$. The parameter $D=4$ is the only one where
$\cM_D$ is a finite equivariant quotient of $\G_m^2$.
Theorem \ref{BigThmRigidity} cannot hold for the parameter $D=4$. Indeed, for every monomial transformation $M$ of $\G_m^2$, the periodic points are given
by $(\mu, \omega)$ where $\mu, \omega$ are roots of unity, the Julia set of $M$ is $\CS^1 \times \CS^1$ and the
equilibrium measure is the Lebesgue measure on the Julia set. Thus, when looking at the quotient, we have that every
loxodromic monomial automorphism of $\cM_4$ have the same equilibrium measure, the same Julia set and the same periodic
points.

\subsection{For a general parameter}\label{SubSecGeneralParameter}
 We conjecture that for every $D \neq 4$ Theorem \ref{BigThmRigidity} holds. This is the
affine counterpart to the Kummer example appearing in the result of Cantat and Dujardin. Using a specialization
argument, we show the following result that goes in the direction of this conjecture.

\begin{bigthm}\label{BigThmTranscendentalParameter}
  Let $D \in \C$ be transcendental and let $f,g \in \Aut(\cM_D)$ be loxodromic automorphisms. The
  following assertions are equivalent:
  \begin{enumerate}
    \item $\Per(f) = \Per(g)$.
    \item $\exists N,M \in \Z, f^N = g^M$.
  \end{enumerate}
\end{bigthm}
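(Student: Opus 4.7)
The converse (2) $\Rightarrow$ (1) is immediate, so I focus on the forward direction. The plan is to specialize the transcendental parameter $D$ to $D_0 = 0$, apply Theorem \ref{BigThmRigidity}, and lift the resulting identity back to the original parameter. Since $\Gamma^*$ has finite index in $\Aut(\cM_D)$ (at most $8$), I would first replace $f, g$ by bounded iterates so that $f = f_\Phi$ and $g = f_\Psi$ for some $\Phi, \Psi \in \Gamma^*$; this replacement preserves both the hypothesis $\Per(f) = \Per(g)$ (since $\Per(f^a) = \Per(f)$) and the conclusion. The polynomial formulas defining $f_\Phi, f_\Psi$ on $\cM_D$ have coefficients in $\Z[D]$, so they glue into relative automorphisms $F, G$ of the family $\pi \colon \cM \to \A^1_\Q$.

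For each $n \geq 1$, let $\Per_n(F) \subset \cM$ be the closed subscheme cut out by $F^n = \id$, and similarly $\Per_n(G)$. On every fiber, $f_{D'}$ is loxodromic (its first dynamical degree equals the spectral radius of $\Phi$, independently of $D'$), so its periodic points of bounded period are isolated; consequently $\Per_n(F) \to \A^1_\Q$ is quasi-finite and every irreducible component of $\Per_n(F)$ dominates the base. The hypothesis $\Per(f) = \Per(g)$ at the transcendental $D$ is a statement at the generic point of $\A^1_\Q$: since $\Per_n(f)$ is finite and contained in $\bigcup_m \Per_m(g)$, there exists an integer $M_n$ with $\Per_n(f) \subseteq \Per_{M_n}(g)$. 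Because every component of $\Per_n(F)$ dominates $\A^1_\Q$ and has its generic point landing in $\Per_{M_n}(G)$, taking closures yields the set-theoretic inclusion $\Per_n(F) \subseteq \Per_{M_n}(G)$ over all of $\A^1_\Q$. Specializing to $D_0 = 0$ gives $\Per_n(f_0) \subseteq \Per_{M_n}(g_0)$, and taking the union over $n$ together with the symmetric argument yields $\Per(f_0) = \Per(g_0)$.

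Since $\Per(f_0)$ is Zariski dense in $\cM_0$ (a standard feature of loxodromic surface automorphisms) and equals $\Per(g_0)$, Theorem \ref{BigThmRigidity} applies at $D_0 = 0$ and produces $N, M \in \Z$ with $f_0^N = g_0^M$. The injectivity of $\Gamma^* \to \Aut(\cM_0)$ from Theorem \ref{ThmAutMarkov} forces $\Phi^N = \Psi^M$ in the abstract group $\Gamma^*$, and evaluating at $D$ delivers $f^N = g^M$ in $\Aut(\cM_D)$. The most delicate step will be the descent of the generic-fiber inclusion of period-$n$ loci to the closed fiber at $D_0 = 0$, which amounts to the claim that $\Per_n(F)$ has no components supported over a single closed point of $\A^1_\Q$. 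This is where the relative loxodromicity of the family is crucial: because the group-theoretic data $\Phi, \Psi$ is independent of $D'$, loxodromicity and hence isolatedness of periodic points persist on every fiber, ruling out vertical components and making the specialization robust.
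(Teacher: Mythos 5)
Your overall strategy is sound and, interestingly, takes a genuinely different route from the paper's. You specialize the period-$n$ loci from the generic (transcendental) parameter \emph{down} to $D_0 = 0$ and invoke Theorem~\ref{BigThmRigidity} directly. The paper goes the other way: starting from the saddle fixed point $q(f)\in\cM_0(\C)$ constructed in Proposition~\ref{PropSaddlePeriodicPointBOundedOrbit}, it uses the analytic implicit function theorem to produce a local curve $c_\epsilon(w)$ of fixed points, moves this to nearby transcendental parameters, and then uses a Galois automorphism of $\C/\overline\Q$ to transport the conclusion to an arbitrary transcendental $D$. Your approach is more algebraic and self-contained in the sense that it reduces cleanly to the headline rigidity theorem; the paper's avoids any appeal to Zariski density of $\Per(f_0)$ and instead exploits only the single special point $q(f)$, at the cost of the implicit-function/Galois machinery.

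There is, however, a real gap in the step you yourself flag as delicate. You assert that ``loxodromicity and hence isolatedness of periodic points persist on every fiber, ruling out vertical components,'' and earlier that quasi-finiteness of $\Per_n(F)\to\A^1_\Q$ implies every component dominates the base. Neither implication holds as stated: quasi-finiteness and fiberwise isolatedness rule out \emph{positive-dimensional} vertical components, but are perfectly compatible with a zero-dimensional component supported over a closed point. The correct argument is a dimension count. Near any point of $\cM$ where the projection $\pi\colon\cM\to\A^1_\Q$ is smooth, one can choose local coordinates $(u,v,D)$ with $D$ the fiber parameter; since $F^n$ preserves $D$, the fixed locus is locally cut out by two equations, so every component has dimension $\geq 1$, and combined with quasi-finiteness this forces domination. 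But $\pi$ is \emph{not} smooth at the singular point $(0,0,0)\in\cM_0$ (there $dD$ vanishes on $\cM$), so the Krull bound degenerates there and an isolated vertical component of $\Per_n(F)$ over $D_0=0$ supported at $(0,0,0)$ is not a priori excluded. The argument is saved by the observation (made in the paper) that $\Aut(\cM_0)$ fixes $(0,0,0)$; hence this point lies in $\Per_{M_n}(g_0)$ for every $M_n$, and the inclusion $\Per_n(f_0)\subseteq\Per_{M_n}(g_0)$ survives. You should add both the dimension count (and the smoothness hypothesis it needs) and this patch at the singular point; as written, the sentence purporting to rule out vertical components gives a reason that does not, in fact, rule them out.

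One further small omission: you invoke Zariski density of $\Per(f_0)$ as ``standard,'' which it is (periodic points equidistribute with respect to $\mu_{f_0}$, whose support is not contained in any algebraic curve since $f_0$ preserves none), but it is worth a citation or a sentence since Theorem~\ref{BigThmRigidity} is stated with the hypothesis that $\Per(f)\cap\Per(g)$ is Zariski dense, and you need to actually verify it after the specialization.
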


\subsection{Plan of the paper}
The proof of Theorem \ref{BigThmRigidity} is split into three parts. In the first part, we construct the Green functions,
Green currents and the equilibrium measure of any loxodromic automorphism of $\cM_D$ with $D$ algebraic at both
archimedean and non-archimedean places. We then apply Yuan-Zhang arithmetic equidistribution theorem from
\cite{yuanAdelicLineBundles2023} to show that two loxodromic automorphisms of $\cM_D$ sharing a Zariski dense set of
periodic points must have the same equilibrium measure at every place.

The second part is to apply the method of
Bedford, Lyubich and Smillie in \cite{bedfordPolynomialDiffeomorphismsOfC1993} to show that in $\cM_D (\C)$ every saddle
periodic point of a loxodromic automorphism is in the support of the equilibrium measure. We use the theory of laminar
and strongly approximable currents from \cite{dujardinStructurePropertiesLaminar2005} and apply techniques from
\cite{dujardinLaminarCurrentsBirational2004}.

The third part is to construct a "special" saddle periodic point $q(f)$
which has the following property: the orbit of $q(f)$ under any loxodromic automorphism $g$ that does not share a common
iterate with $f$ is unbounded. To construct $q(f)$ we use the theory of quasi-Fuchsian representation, the simultaneous
uniformization theorem of Bers and Thurston's hyperbolisation theorem for 3-fold fibering over a circle (see
\cite{mcmullenRenormalization3ManifoldsWhich1996}). This third part is where the hypothesis on the parameter $D$ is
used. The specific values of $D$ give an interpretation of $\cM_D$ as representation of the fundamental group of the
punctured torus ($D=0$) or of an orbifold obtained from a genus 1 torus with a singular point of index $q$ ($D = 2 -2
\cos(\frac{\pi}{q})$).

\subsection*{Acknowledgments}
This work was done during my PhD thesis. I would like to thank my PhD advisors Serge Cantat and Junyi Xie for their
guidance. I also thank Juan Souto for answering questions about quasi-Fuchsian representation theory and Xinyi Yuan for
our discussions on adelic divisors. I thank Seung uk Jang for pointing out typos and small mistakes in an earlier version
of this paper. Part of this paper was
written during my visit at Beijing International Center for Mathematical Research which I thank for
its welcome. Finally, I thank the France 2030 framework programme Centre Henri
Lebesgue ANR-11-LABX-0020-01 and
European Research Council (ERCGOAT101053021) for creating an attractive mathematical environment. I also thank the
referee for his/her comments and suggestions.

\section{Adelic divisors over quasiprojective varieties}\label{SectionAdelicDivisor}
\subsection{Weil and Cartier divisors}\label{sec:terminology}
Let $R$ be a Noetherian integral domain, a \emph{variety} over $R$ is a normal flat integral scheme of finite type over
$\spec R$.

Let $X$ be a variety over $R$, a \emph{Cartier divisor} is a global section of the sheaf $\mathcal
K_X^\times / \OO_X^\times$ where $\mathcal K_X$ is the sheaf of rational functions over $X$ and $\OO_X$ the sheaf of
regular functions. Let $\A = \Z, \Q, \R$, an $\A$-Cartier divisor is an element of the form $D = \sum_i a_i D_i$ where
$a_i \in A$ and $D_i$ is a Cartier divisor. An $\A$-\emph{Weil} divisor over $X$ is a formal sum of irreducible
codimension 1 subvarieties with coefficients in $\A$. An $\A$-Weil divisor $D = \sum_i a_i E_i$ is said to be effective
if $a_i \geq 0$ for every $i$. When $\A = \Z$ we will drop the notation $\Z$-Cartier or Weil divisors and just call them
Cartier and Weil divisors.

If $X$ is a variety over $R$, then since $X$ is normal by definition, it is regular in codimension 1 since $R$ is
Noetherian. Thus, any Cartier divisor induces a unique Weil divisor over $X$, see for example \cite[Corollary
II.6.14]{hartshorneAlgebraicGeometry1977}. If $X$ is regular, then the converse is true, every Weil divisor induces
a unique Cartier divisor (\cite[Proposition II.6.11]{hartshorneAlgebraicGeometry1977}). Recall also that since our
varieties are normal, they are regular outside a subset of codimension 2 (\cite[Theorem
I.6.2A]{hartshorneAlgebraicGeometry1977}).
\subsection{Berkovich spaces} For a general reference on Berkovich spaces, we refer to
\cite{berkovichSpectralTheoryAnalytic2012}.
Let $\K_v$ be a complete field with respect to an absolute value $| \cdot |_v$. Let $X_{\K_v} = \spec A$
be an affine $\K_v$-variety, the \emph{Berkovich analytification} $X_{\K_v}^{\an}$ of $X_{\K_v}$ is the set of multiplicative
seminorms over $A$ that extends $| \cdot |_v$. It is a locally ringed space with a contraction map
\begin{equation}
  c : X_{\K_v}^{\an} \rightarrow X_{\K_v}
  \label{<+label+>}
\end{equation}
defined as follows. If $x \in X_{\K_v}^{\an}$, then
\begin{equation}
  c(x) = \ker (x) = \left\{ a \in A : |a|_x = 0 \right\}
  \label{<+label+>}
\end{equation}
where $\left| \cdot \right|_x$ is the seminorm associated to $x$. The topology on $X_{\K_v}^{\an}$ is the coarsest
topology such that for every $\phi \in A$ the evaluation map $x \mapsto \left| \phi \right|_x \in \R$ is continuous.
If $X_{\K_v}$ is a $\K_v$-variety then $X_{\K_v}^{\an}$ is defined by a gluing process using affine charts and we have
the contraction map $c: X_{\K_v}^{\an} \rightarrow X_{\K_v}$. In
particular, if $\phi \in \K_v (X_{\K_v})$ is a rational function with Weil divisor $\div (\phi)$, for $x \in X^{\an}_{\K_v}
\setminus \left( \Supp \div \phi \right)_{\K_v}^{\an}$, we define $ \left| \phi (x) \right| := \left| c^* \phi \right|_x$. If
$X_{\K_v}$ is proper (e.g projective), then $X_{\K_v}^{\an}$ is compact.
If $p \in X_{\K_v}(\K_v)$ is a closed point, then the fiber $c^{-1} (p)$ consists of a single point $| \cdot |_p$ defined by
\begin{equation}
  | a |_p = | a(p) |_v
  \label{<+label+>}
\end{equation}
where $a(p) = a \phantom{.} \mod \phantom{.} p$. This uses the fact that the local field at $p$ is $\K_v$. We thus have
an embedding
\begin{equation}
  \iota_0 = c^{-1} : X_{\K_v} (\K_v) \hookrightarrow X_{\K_v}^{\an}
  \label{<+label+>}
\end{equation}
and we also write $X_{\K_v} (\K_v)$ for its image. With an analog construction, one can show that there is a map
$X_{\K_v}(\overline \K_v) \rightarrow X_{\K_v}^{\an}$ which is not injective: two points have the same image if they are
in the same Galois orbit. We still write $X_{\K_v}(\overline \K_v)$ for its image in $X_{\K_v}^{\an}$.
If $\phi : X_{\K_v} \rightarrow Y_{\K_v}$ is a morphism of varieties, then there exists a unique morphism
\begin{equation}
  \phi^{\an} :
  X_{\K_v}^{\an} \rightarrow Y_{\K_v}^{\an}
\end{equation}
such that the diagram
\begin{equation}
\begin{tikzcd}
  X_{\K_v}^{\an} \ar[r, "\phi^{\an}"] \ar[d] & Y_{\K_v}^{\an} \ar[d] \\
  X_{\K_v} \ar[r, "\phi"] & Y_{\K_v}
  \label{<+label+>}
\end{tikzcd}
\end{equation}
commutes. In particular, if $X_{\K_v} \subset Y_{\K_v}$, then $X_{\K_v}^{\an}$ is isomorphic to $c_Y^{-1}(X_{\K_v})
\subset Y_{\K_v}^{\an}$.

\subsection{Places and restricted analytic spaces}
Let $\K$ be a number field. A \emph{place} $v$ of $\K$ is an equivalence class of absolute values over $\K$. If $v$ is
archimedean then there is an embedding $\sigma : \K \hookrightarrow \C$ such that any absolute value representing $v$ is
of the form $|x| = \left|\sigma(x) \right|^t_\C$ with $0 < t \leq 1$. In that case we will write $|\cdot|_v$ for the
absolute value with $t=1$ and we write $\C_v = \C$. If $v$ is non-archimedean (we also say that $v$ is \emph{finite}),
then it lies over a prime $p$ and we write $|\cdot|_v$ for the
absolute value of $\K$ representing $v$ such that $|p|_v = \frac{1}{p}$. Every finite place $v$ is of the form
\begin{equation}
  v(P) = \# \left( \OO_\K / \m \right)^{-\ord_\m (P)}
  \label{<+label+>}
\end{equation}
for $P \in \K$ where $\m$ is a maximal ideal of $\OO_\K$, the ring of integers of $\K$. We write $\K_v$ be the completion of
$\K$ with respect to $|\cdot|_v$. We write $M(\K)$ for the set of places of $\K$. If $V \subset M(\K)$, we write
$V[\fin]$ for the subset of finite places in $V$ and $V[\infty]$ for the archimedean ones.

If $v$ is archimedean, then define $\OO_v = \C_v$. If $v$ is non-archimedean, we define $\OO_v$ as the ring of elements of
absolute values $\leq 1$ and $\kappa_v$ as the residue field
\begin{equation}
  \kappa_v := \OO_v / \m_v,
  \label{<+label+>}
\end{equation}
where $\m_v$ is the maximal ideal of elements of absolute value $< 1$. In our setting, the non-archimedean absolute
values are always discretly valued and therefore $\OO_v$ is Noetherian.

Let $X$ be a variety over $\K$. For every place $v$ of $\K$, define $X_v := X \times_\K \spec \K_v$. Similarly, if $D$ is
an $\R$-Cartier divisor over $X$ then we denote by $D_v$ its image under the base change. We write $X_v^{\an}$ for the
Berkovich analytification of $X_v$. We also define the global Berkovich analytification of $X$ as
\begin{equation}
  X^{\an} := \bigsqcup_v X_v^{\an}.
  \label{<+label+>}
\end{equation}
Comparing to \cite{yuanAdelicLineBundles2023}, this space is called the \emph{restricted analytic space} of $X$ by Yuan
and Zhang. If $V$ is a set of places, we also define
\begin{equation}
  X_V^{\an} := \bigsqcup_{v \in V} X_v^{\an}.
  \label{<+label+>}
\end{equation}
In particular, we define
\begin{equation}
  X^{\an} [\fin] := \bigsqcup_{v \in M(\K)[\fin]} X^{\an}_v, \quad X^{\an}[\infty] := \bigsqcup_{v \in M(\K)[\infty]}
  X^{\an}_v.
  \label{<+label+>}
\end{equation}
If $\sX$ is a variety over $\OO_\K$, we write $\sX_v$ for the base change
\begin{equation}
  \sX_v = \sX \times_{\OO_\K} \spec \OO_v.
  \label{<+label+>}
\end{equation}
Similarly, if $\sD$ is an $\R$-Cartier divisor over $\sX$, we denote by $\sD_v$ its image under the base change.

\subsection{Adelic divisors over a projective variety}
Recall the notions of $\A$-Cartier and Weil divisors from \S \ref{sec:terminology}.
\subsection*{Models, horizontal and vertical divisors} If $D = \sum_i a_i D_i$ is an $\R$-Cartier divisor on $X$, a \emph{model} of
$(X,D)$ is the data of $(\sX, \sD)$ where $\sX$ is a normal projective variety over $\OO_\K$ with generic fiber $X$ and
$\sD = \sum_i a_i \sD_i$ is an $\R$-Cartier divisor on $\sX$ such that ${\sD_i}_{|X} = D_i$.

There are two types of $\R$-\emph{Weil} divisors on a projective variety $\sX$
over $\OO_\K$: \emph{horizontal divisors} whose irreducible components are the closure of prime divisors over the
generic fiber and \emph{vertical divisors} whose irreducible components do not intersect the generic fiber. Every
$\R$-Weil divisor $\sW$ over $\sX$ can be uniquely split into a sum $\sW = W_{\text{hor}} + \sW_{\text{vert}}$ of a horizontal divisor and a
vertical one. In particular, $W_{\text{hor}} := \sW_{|X}$ is the restriction of $\sW$ to the generic fiber.

Let $V \subset \OO_\K$ we say that $\sD$ is \emph{horizontal} over $V$ if the Weil divisor over $\sX_V$ defined by
$\sD_V$ is horizontal. In that case we will write $\sD_V = D_V$ for the Cartier divisor $\sD_V$ over $\sX_V$.

\subsection*{Green functions} A \emph{Green function} of an $\R$-Cartier divisor $D$ is a continuous function $g :
X^{\an} \setminus (\Supp D)^{\an} \rightarrow \R$ such that for any point $p \in (\Supp D)^{\an}$, if $z_i$ is a local
equation of $D_i$ at $p$ then the function
\begin{equation}
  g + \sum_i a_i \log |z_i|
  \label{<+label+>}
\end{equation}
extends locally to a continuous function at $p$. For any place $v \in M (\K)$, we write $g_v$ for $g_{|X^{\an}_v}$,
$g[\fin] = g_{|X^{\an}[\fin]}$ and $g[\infty] = g_{X^{\an}[\infty]}$. For the archimedean places $v$ we add
the extra condition that $g_v$ must be invariant under complex conjugation. That is if $\overline v$ is the conjugate
place of $v$, then $g_{\overline v} (z) = g_v (\overline z)$ for $z \in X(\C)$. Notice that compared to
\cite{moriwakiAdelicDivisorsArithmetic2016}, our definition of Green functions differs by a factor 2.

If $(\sX, \sD)$ is a model of $(X,D)$, then for every finite place $v$, $(\sX_v, \sD_v)$ induces a Green function of
$D_v$ over $X^{\an}_v$ as follows. We have the anticontinuous reduction map (see \cite{berkovichSpectralTheoryAnalytic2012})
\begin{equation}
  r_{\sX_v} : X^{\an}_v \rightarrow (\sX_v)_{\kappa_v}.
  \label{<+label+>}
\end{equation}
Let $x \in X^{\an}_v \setminus (\Supp D)_v^{\an}$, and let $z_i$ be a local equation of $\sD_{i,v}$ at $r_{\sX_v}
(x)$, we define
\begin{equation}
  g_{(\sX_v, \sD_v)} (x) := - \sum_i a_i \log |z_i(x)|.
  \label{<+label+>}
\end{equation}
It does not depend on the choice of the local equations $z_i$ because an invertible regular function $\phi$ at
$r_{\sX_v}(x)$ satisfies $\left| \phi(x) \right|= 1$ (recall that here $v$ is non-archimedean).

\subsection*{Divisorial points}\label{subsec:divisorial-points}
Let $v$ be a finite place of $\K$. Let $\sX_v$ be a projective variety over $\OO_v$ with $X_v = \sX_v \times_{\OO_v} \K_v$. The
special fiber $\sX_v \times_{\OO_v} \kappa(v)$ has finitely many irreducible components $E_1, \dots, E_r$. There exists a unique
point $x_i \in X_v^{\an}$ such that $r_{\sX_v} (x_i)$ is the generic point of $E_i$. Any point in $X_v^{\an}$ that
arises like this is called a \emph{divisorial} point.
\begin{prop}[{\cite[2.4.9]{mustataWeightFunctionsNonArchimedean2015}}]\label{prop:density-divisorial-points}
  The set of divisorial points is dense in $X^{\an}_v$.
\end{prop}
If $v$ is a finite place of $\K$, then it is discretely valued and $\OO_v$ is Noetherian. In that case, the local ring
of the generic point of an irreducible component $E_i$ is a Noetherian regular local ring of dimension 1, hence a
discrete valuation ring. Let $\ord_{E_i}$ be the associated valuation, then the point $x_i$ corresponds the seminorm
given by $e^{-\ord_{E_i}}$. The proof of Proposition \ref{prop:density-divisorial-points} implies the following.
\begin{cor}\label{cor:dense-sequence}
  Let $x \in X^{\an}_v$ and let $\sX_v$ be a model of $X_v$ over $\OO_v$. Define the following sequence of models:
  $\sX_0 = \sX_v$ and $\sX_{n+1}$ is the normalised blow up of $\sX_n$ along $\overline{r_{\sX_n} (x)}$ and
  $E_{n+1}$ is the exceptional divisor (the one that is sent dominantly to $\overline{r_{\sX_n} (x)}$). Let $x_n$ be the
  divisorial point associated to $E_n$, then the sequence $(x_n)$ converges to $x$.
\end{cor}

\subsection*{Adelic divisors} An \emph{adelic} divisor $\overline D$ on $X$ is the data of $\overline D = (D, g)$ where
$D$ is an $\R$-divisor on $X$
and $g$ is a continuous
Green function of $D$ over $X^{\an}$ such that
\begin{enumerate}[label=(\roman*)]
    \item There exists an open subset $V \subset \spec \OO_\K$ and a
      model $(\sX_V, \sD_V)$ of $(X,D)$ over $V$ such that for every finite place $v $ lying over $V$,
\begin{equation}
  g_v = g_{(\sX_v, \sD_v)}.
  \label{<+label+>}
\end{equation}
\item For any finite place not lying over $V$ and all the archimedean ones, the Green function $g_v$ is the uniform
  limit of model Green functions of $D$ over $X^{\an}_v$.

  \end{enumerate}
A \emph{model} adelic divisor on $X$ is an adelic divisor $(D,g)$ on $X$ such that $g[\fin]$ is induced by a model
$(\sX, \sD)$. It is \emph{vertical} if $\sD$ is vertical, in particular $D = 0$.

\begin{dfn}\label{dfn:positivity-adelic-divisors-projective-variety}
  Following \cite{zhangSmallPointsAdelic1993},
  \begin{enumerate}
    \item A model adelic divisor $(\sX, \sD)$ is \emph{semipositive} if $g[\infty]$ is plurisubharmonic
      (psh) and $\sD$ is nef over $\sX$.
    \item An adelic divisor $\overline D = (D, g)$ is \emph{semipositive} if $g$ is the uniform
  limit of semipositive models of $(X, D)$.
  \item An adelic divisor is \emph{integrable} if it is the difference of two
semipositive adelic divisors.
\item An adelic divisor $\overline D = (D,g)$ is \emph{effective} if $g \geq 0$. In particular this implies that
$D$ is an effective divisor.
 \end{enumerate}
\end{dfn}
We write $\overline D \geq \overline D'$ if $\overline D - \overline D'$ is effective. We
say that $\overline D$ is \emph{strictly} effective if $\overline D \geq 0$ and $g[\infty] > 0$.

\begin{lemme}\label{lemme:effective-semipositive-model}
Every ample effective Cartier divisor $H$ on $X$ admits a strictly effective semipositive model.
\end{lemme}
\begin{proof}
 Let $\iota: X \hookrightarrow \P^N_\K$ be a projective embedding such that $H = \left\{ T_0 = 0 \right\} \cap \iota
 (X)$ where $T_0, \cdots, T_N$ are the homogeneous coordinates of $\P^N_\K$. We can find a model $\sX$ such that $\iota$
 extends to a regular map
\begin{equation}
  \overline \iota : \sX \rightarrow \P^N_{\OO_\K}
  \label{<+label+>}
\end{equation}
then the Cartier divisor $\sD = \overline \iota^* \left\{ T_0 = 0 \right\}$ is a model of $H$ and a nef Cartier divisor over $\sX$
(Here $T_0$ is a homogeneous coordinate of $\P^N_{\OO_\K}$). For every archimedean place $v$, $g_v$ is given by
\begin{equation}
  g_v (p) = 1 + \log^+ \max \left( | t_1 (p)|, \cdots, | t_N (p)|  \right)
  \label{<+label+>}
\end{equation}
where $t_i := \frac{T_i}{T_0}$. This is (one plus) the pull back of the \emph{Weil} metric over $\P^N$ (see e.g
\cite{chambert-loirHeightsMeasuresAnalytic2011}).
\end{proof}

\begin{prop}[Lemma 3.3.3 of \cite{yuanAdelicLineBundles2023}]\label{prop:effectivity-model-divisors}
  If $\overline \sD = (D,g)$ is a model adelic divisor, then $\overline \sD$ is effective
  if and only if $g[\infty] \geq 0$ and $\sD$ is effective as an $\R$-Weil divisor.
\end{prop}
\begin{proof}
  This is shown in \cite{yuanAdelicLineBundles2023} only when $\sD$ is a $\Q$-Cartier divisor. Let $\sX$ be a model of
  $X$ over $\OO_\K$ where $\sD$ is defined. First of all if
  $g[\infty] \geq 0$, then it is clear that $D = \sD_{\text{hor}}$ is an effective $\R$-Weil divisor. Indeed, if $D = \sum_i
  a_i E_i$ with $a_i \in \R$ and $E_i$ are distinct irreducible codimension 1 subvarieties of $X$, then at a general
  point of $E_i$ we have that $g[\infty] = - a_i \log \left| z_i \right| + h$ where $z_i$ is a local equation of $E_i$
  (since our varieties are normal, they are regular outside a set of codimension 2 so $E_i$ defines a Cartier divisor at
  a general point) and $h$ is continous. Since $g[\infty] \geq 0 $ this shows that $a_i \geq 0$.

  Assume that $\overline \sD$ is effective. We have that $g[\infty] \geq 0$ and that $\sD_{\text{hor}}$ is an effective
  $\R$-Weil divisor. We now show that $\sD_{\text{vert}}$ is an effective $\R$-Weil divisor. Let $E \subset \sX$ be a vertical irreducible
  component of codimension 1 and let $v \in \spec \OO_\K$ be the place such that $E$ lies over $v$. Then, by \S
  \ref{subsec:divisorial-points}, there exists $x_E \in X_v^{\an}$ such that $r_{\sX_v}(x_E) = \eta_E$ the generic point
  of $E$ and $x_E$ is given by the seminorm $e^{-\ord_E}$. Write $\sD = \sum_i a_i \sD_i$ where $a_i \in \R$
  and $\sD_i$ are Cartier divisors with local equations $z_i$ at $\eta_E$. We have,
  \begin{equation}
    0 \leq g(x_E) = - \sum_i a_i \log \left| z_i (x_E) \right| = \sum_i a_i \ord_E (\sD_i) = \ord_E (\sD).
    \label{eq:<+label+>}
  \end{equation}
  This shows one implication.

  Conversely, if $g[\infty] \geq 0$ and $\sD$ is an effective $\R$-Weil divisor, then this implies that $\sD_{\text{vert}}$ is
  a vertical effective $\R$-Weil divisor. This implies that for any finite place $v$ and for any divisorial point
  $x_E \in X^{\an}_v$ we have $g(x_E) = \ord_E (\sD) \geq 0$. Since the divisorial points are dense by Proposition
  \ref{prop:density-divisorial-points} we get that $g[\fin] \geq 0$.
\end{proof}
A corollary of the proof is the following.
\begin{cor}\label{cor:Green-function-zero-if-no-support}
  Let $\overline \sD$ be a model adelic divisor and $\sX$ a model where $\sD$ is defined. Let $v$ be a finite place of
  $\K$ and let $x \in X^{\an}_v$. If $r_{\sX_v}(x)$ is not in the support of the $\R$-Weil divisor $\sD_v$, then
  $g_{(\sX, \sD)} (x) = 0$.
\end{cor}
\begin{proof}
  Let $(x_n)$ be the sequence associated to $x$ defined in Corollary \ref{cor:dense-sequence}. Since $r_{\sX}(x)$ is not
  in the support of $\sD$ we have that for every $n, g_{(\sX, \sD)} (x_n) = 0$ since for every vertical irreducible
  divisor $E_n$, $\sD$ has no support over $E_n$. Since $x_n \rightarrow x$ we have the result.
\end{proof}
\begin{rmq}\label{rmq:max-not-possible}
  In \cite{dujardinDynamicalManinMumford2017}, the authors used the theory of adelic divisors over $\P^2$. Their
  approach was two construct Green functions $G^+, G^-$ which are Green functions of the line at infinity except at one
  point and to use the following trick: The Green function $G = \max(G^+, G^-)$ is a Green function of the line at
  infinity over $\P^2$. This works because $\P^2 \setminus \A^2$ has only one irreducible component.
In our setting, using adelic divisors over a projective variety will not be enough because the compactifications of $\cM_D$ have
several components at infinity. In general if $\overline D_1, \overline D_2$ are adelic divisors with Green functions
$g_1, g_2$ respectively, then $\max(g_1, g_2)$ is not the Green function of any $\R$-Cartier divisor. Indeed, suppose for
example that $D_1 = E$ and $D_2 = F$ are irreducible divisors that intersect simply and normally. That means that for
any point $p \in E \cap F$, $E \cup F$ is locally given by the equation $xy = 0$ where $x$ is a local equation of $E$
and $y$ is the local equation of $F$. Then, $g = \max (g_1, g_2)$ should be the Green function of a Cartier divisor $D$
supported on
$E \cup F$. But if we're approaching $p$ by staying very close to the axis $x = 0$, then we get that $g \simeq \log
\left| x \right| $ at $p$ so $D = E$. But the same argument with respect to the axis $y = 0$ shows that $D = F$ and this
is a contradiction. This explains why we have to use the theory of Yuan and Zhang over quasiprojective varieties.
\end{rmq}

\subsection{Over quasiprojective varieties}
The main reference for this section is \cite{yuanAdelicLineBundles2023}.
Let $U$ be a normal quasiprojective variety over a number field $\K$. A \emph{quasiprojective model} $\sU$ of $U$ is a
quasiprojective variety $\sU$ over $\spec \OO_\K$ with generic fiber isomorphic to $U$. A \emph{projective model} of $\sU$ is a
projective variety $\sX$ over $\spec \OO_\K$ with an open embedding $\sU \hookrightarrow \sX$.

A model adelic divisor on $\sU$ is a model adelic divisor defined over a projective model of $\sU$. If $\overline
\sD$ is a model adelic divisor on some projective model $\sX$ of $\sU$. We say that $\overline \sD$ is \emph{supported at
infinity} if $(\sD_{\text{hor}})_{|U} = 0$.  We write $\hat
\DivInf (\sX, \sU)$ for the set of model adelic divisor \emph{supported at infinity} induced by a fixed projective model $\sX$ of
$\sU$. Since the system of projective models of $U$ is a projective system, we can define the direct limit
\begin{equation}
  \hatDivInf (\sU)_\mod := \varinjlim_{\sX} \hatDivInf(\sX, \sU).
  \label{<+label+>}
\end{equation}
Our definition differs from \cite{yuanAdelicLineBundles2023} because we only take divisors supported outside $U$. This
makes sense for our dynamical setting.

A \emph{boundary divisor} on $\sU$ is a strictly effective model adelic divisor $\overline \sD_0 = (\sD_0, g_0)$ over a
projective model $\sX_0$ of $\sU$ such that $\Supp \sD_0 = \sX_0 \setminus \sU$. It defines a norm on
$\hatDivInf(\sU)_\mod$ given by
\begin{equation}
  \parallel \overline \sD \parallel = \inf \left\{ \epsilon > 0 : - \epsilon \overline \sD_0 \leq \overline \sD \leq \epsilon
  \overline \sD_0 \right\}.
  \label{<+label+>}
\end{equation}
An adelic divisor $\overline D$ on $\sU$ is an element of the compactification of $\hatDivInf(\sU)_\mod$ with respect to this
norm. More
precisely, an adelic divisor on $\sU$ is a sequence of model adelic divisors $(\sX_i, \overline \sD_i)$ such that there
exists a sequence $\epsilon_i > 0, \epsilon_i \rightarrow 0$ such that
\begin{equation}
  \forall j \geq i, \quad - \epsilon_i \overline \sD_0 \leq \overline \sD_j - \overline \sD_i \leq \epsilon_i \overline
  \sD_0.
  \label{<+label+>}
\end{equation}
If we denote by $g_i$ the Green function of the model adelic divisor $\osD_i$, then this is equivalent to asking that
\begin{equation}
  -\epsilon_i g_0 \leq g_j - g_i \leq \epsilon_i g_0.
  \label{<+label+>}
\end{equation}
In particular, $(g_i)$ converges uniformly locally to a continuous function $g$ over $U^{\an}$. We call it the
\emph{Green} function of $\overline D$.
We write $\hatDivInf(\sU)$ for the compactification of $\hatDivInf(\sU)$ with respect to this norm.
An adelic divisor on $U$ is an element of
\begin{equation}
  \hatDivInf(U / \OO_\K) := \varinjlim_{\sU} \hatDivInf(\sU)
  \label{<+label+>}
\end{equation}

\begin{rmq}
  In \cite{yuanAdelicLineBundles2023} Yuan and Zhang use only $\Q$-model divisors for the definition of $\DivInf(\sX, \sU)$
  and $\hatDivInf(\sU)$ whereas here we also allow $\R$-model adelic divisors. If one wants to use adelic line bundles
  instead of adelic divisors and in particular the global section of line
  bundles then it make sense to use only $\Q$-line bundles in the limit process defining adelic line bundles with the
  boundary topology but we do not need that here. Anyway, for adelic divisors, using $\R$-model divisors provides the
  same final space thanks to the following lemma.
\end{rmq}

\begin{lemme}
  Let $\osD$ be a model $\R$-adelic divisor supported at infinity and let $\osD_0$ be a boundary divisor. Then for any
  $\epsilon > 0$, there exists a $\Q$-model divisor $\osD_\epsilon$ such that
  \begin{equation}
    - \epsilon \osD_0 \leq \osD - \osD_\epsilon \leq \epsilon \osD_0
    \label{<+label+>}
  \end{equation}
\end{lemme}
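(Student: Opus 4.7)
The plan is to approximate the real coefficients of $\sD$ by rationals after moving to a common projective model. The argument is essentially a density statement: $\Q$ is dense in $\R$, and the boundary divisor $\osD_0$ controls the relevant topology because its divisor part dominates (up to scaling) every boundary component.

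First I would choose a common projective model. Let $\sX$ be the projective model underlying $\osD$ and $\sX_0$ the one underlying $\osD_0$. Pick a projective model $\sY$ of $\sU$ dominating both, via birational morphisms $\pi:\sY\to\sX$ and $\pi_0:\sY\to\sX_0$ that are isomorphisms above $\sU$. Model adelic divisors pull back to equivalent model data on $\sY$, so I replace $\osD$ and $\osD_0$ by their pullbacks to $\sY$. Let $E_1,\dots,E_n$ denote the irreducible components of $\sY\setminus\sU$. Since $\pi$ is an isomorphism over $\sU$ and $\sD$ is supported at infinity, I can write $\pi^*\sD=\sum_i a_i E_i$ with $a_i\in\R$. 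Crucially, because $\Supp\sD_0=\sX_0\setminus\sU$ and $\pi_0$ is an isomorphism above $\sU$, the preimage $\pi_0^{-1}(\sX_0\setminus\sU)$ equals $\sY\setminus\sU$, so $\pi_0^*\sD_0=\sum_i c_i E_i$ with every $c_i>0$.

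Next I would do the rational approximation. For the given $\epsilon>0$, pick $a_i^\epsilon\in\Q$ with $|a_i-a_i^\epsilon|\leq \epsilon c_i$ for every $i$ (which is possible since each $c_i>0$), and set $\sD_\epsilon:=\sum_i a_i^\epsilon E_i$ on $\sY$. I define the $\Q$-model adelic divisor $\osD_\epsilon$ to be induced by $(\sY,\sD_\epsilon)$, taking the archimedean Green function of $\osD_\epsilon$ equal to $g[\infty]$, the archimedean Green function of $\osD$ itself. Then, as $\R$-divisors on $\sY$,
\begin{equation*}
-\epsilon\,\pi_0^*\sD_0\;\leq\;\pi^*\sD-\sD_\epsilon\;=\;\sum_i(a_i-a_i^\epsilon)E_i\;\leq\;\epsilon\,\pi_0^*\sD_0.
\end{equation*}
Applying the formula for model Green functions at finite places, this divisor inequality translates directly to $-\epsilon\,g_0[\fin]\leq g[\fin]-g^{(\epsilon)}[\fin]\leq\epsilon\,g_0[\fin]$ on $U^{\an}[\fin]$. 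At the archimedean places the difference $g[\infty]-g^{(\epsilon)}[\infty]$ vanishes by construction, which is trivially $\leq\epsilon\,g_0[\infty]$ in absolute value because $g_0[\infty]\geq 0$ (as $\osD_0$ is effective). Invoking the characterization of effectivity for model adelic divisors recalled just before the lemma, I conclude that $-\epsilon\osD_0\leq \osD-\osD_\epsilon\leq\epsilon\osD_0$.

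The main technical point, and the only place the hypothesis that $\osD_0$ is a \emph{boundary} divisor is essential, is the positivity assertion $c_i>0$ for every component of $\sY\setminus\sU$ on the common model. Without this, rational approximation of the coefficient $a_i$ of a component $E_i$ not dominated by $\sD_0$ could not be controlled by $\epsilon\osD_0$. Any vertical contribution in $\pi^*\sD$ over finitely many primes can be treated by the same approximation scheme at each such place, using that $g_0[\fin]$ is strictly positive on the reduction of the relevant special fibers; the archimedean ingredient of the proof is trivial because we simply reuse $g[\infty]$.
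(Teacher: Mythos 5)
Your proof has a genuine gap at the archimedean places, and this is exactly the part that the paper treats as nontrivial. You define $\osD_\epsilon$ to be induced by $(\sY,\sD_\epsilon)$ with $\sD_\epsilon=\sum_i a_i^\epsilon E_i$ and claim you can take its archimedean Green function to be $g[\infty]$, the archimedean Green function of $\osD$. But $g[\infty]$ is a Green function of $D=\sum_i a_i E_i$: near a boundary point it behaves like $-\sum_i a_i\log|z_i|$ up to a continuous function. A Green function of $D_\epsilon=\sum_i a_i^\epsilon E_i$ must instead behave like $-\sum_i a_i^\epsilon\log|z_i|$. Since the $a_i^\epsilon$ differ from the $a_i$, the pair $(\sD_\epsilon, g[\infty])$ is not an adelic divisor at all, so your conclusion that $g[\infty]-g^{(\epsilon)}[\infty]=0$ vanishes is vacuous: you have not produced a valid $\osD_\epsilon$. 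Far from being the trivial part, the archimedean places are where all the work of the lemma lives, precisely because the logarithmic singularity type is locked to the divisor's real coefficients.

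The paper's proof repairs exactly this point. After noting that the finite places are handled by approximating the coefficients of $\sD$ by rationals on the model $\sX$ (which you also do), it fixes an archimedean place $v$ and decomposes the Green function linearly in terms of Green functions of the individual boundary components: choosing Green functions $g_i$ of each $E_i$ and absorbing the bounded remainder $h=g-\sum_i a_ig_i$ into one of the $g_i$, one gets $g=\sum_i a_ig_i$ exactly. Then $\sum_i a_{i,n}g_i$ is a genuine Green function of $\sum_i a_{i,n}E_i$ for rational $a_{i,n}$, and the difference $g-\sum_i a_{i,n}g_i=\sum_i(a_i-a_{i,n})g_i$ is controlled by $\epsilon g_0$ because each $g_i/g_0$ is bounded. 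Your proof could be repaired the same way: after picking $a_i^\epsilon$, you must build a Green function of $D_\epsilon$ that is $O(\epsilon g_0)$-close to $g[\infty]$, and this requires decomposing $g[\infty]$ into boundary-component Green functions as in the paper.
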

\begin{proof}
  Let $\sX$ be a model over $\spec \OO_\K$ such that $\sD, \sD_0$ are defined. Write $\sD = \sum_i a_i \sD_i$ where
  $a_i \in \R$ and $\sD_i$ are Cartier divisors over $\sX$. By Proposition \ref{prop:effectivity-model-divisors}, if
  $a_{i,n}$ is a sequence of rational numbers converging to $a_i$, then the Green function of the model divisor $\sD_n
  := \sum_i a_{i,n} \sD_i$ converges towards the Green function of $\sD$ with respect to the boundary topology over
  every non-archimedean place. We just need to construct the Green function of $\sD_n$ at the archimedean places. Fix an
  archimedean place $v$, and let $g : U(\C) \rightarrow \R$ be the Green function of $\osD$
  over $v$. Let $D_1, \cdots, D_r$ be the restrictions of $\sD_1, \dots, \sD_r$ to $X$ and let $g_i$ be a Green function
  of $D_i$ over $U(\C)$ then $g - \sum_i a_i g_i = h$ is a continuous bounded function over
  $U(\C)$. We can suppose without loss of generality that $a_1 \neq 0$ and we replace $g_1$ by $g_1 + \frac{1}{a_1} h$ such that
  \begin{equation}
    g = \sum_i a_i g_i.
    \label{<+label+>}
  \end{equation}
  Now, let $A > 0$ such that for all $i, \sup_{U(\C)} \left|\frac{g_i}{g_0}\right| \leq A$ and let $a_{i,n}$ be a sequence of
  rational numbers converging towards $a_i$, then
  \begin{equation}
    \left|\frac{g - \sum_i a_{i,n} g_i}{g_0}\right| \leq r A \max_i \left( \left|a_i - a_{i,n}\right| \right)
    \label{<+label+>}
  \end{equation}
  and we have the result.
\end{proof}

\begin{dfn}
An adelic divisor $\overline D$ over $U$ is
\begin{enumerate}
  \item \emph{strongly nef} if for the Cauchy sequence $(\overline \sD_i)$ defining
it we can take for every $\overline \sD_i$ a semipositive model adelic divisor.
\item \emph{nef} if there exists a strongly nef adelic divisor $\overline A$ such that for all $m \geq 1, \overline D +
  m \overline A$ is strongly nef.
\item \emph{integrable} if it is the difference of two strongly nef adelic divisors.
\end{enumerate}
\end{dfn}

If $\overline D$ is an adelic divisor over $U$, then $\overline D$ has an associated \emph{height function}
\begin{equation}
  h_{\overline D} : U(\overline \K) \rightarrow \R
  \label{<+label+>}
\end{equation}
which is computed as follows if $D_{|U} = 0$ which is our use case in this paper:
\begin{equation}
  \forall p \in U (\overline K), \quad h_{\overline D} (p) = \frac{1}{\deg p} \sum_{v \in
  M(\K)} \sum_{q \in \Gal(\overline \K / \K)\cdot p} n_v g_{\overline D, v} (q)
  \label{<+label+>}
\end{equation}
where $n_v = [\K_v : \Q_v]$. Moreover, for any closed $\overline \K$-subvariety $Z$ of $U$, we define the height of $Z$
to be
\begin{equation}
  h_{\overline D} (Z) := \frac{\overline D_{|Z}^{\dim Z + 1}}{(1 + \dim Z)D_{|Z}^{\dim Z}}
  \label{<+label+>}
\end{equation}
where $\overline D_{|Z}^{\dim Z + 1}$ represents the intersection number of adelic divisors. See
\cite{yuanAdelicLineBundles2023} for more details.

\begin{prop}[{\cite[\S2.5.5]{yuanAdelicLineBundles2023}}]\label{PropFunctoriality}
  If $f : X \rightarrow Y$ is a morphism between quasiprojective varieties over $\K$, then there is a pullback operator
  \begin{equation}
    f^* : \hatDivInf( Y / \OO_\K) \rightarrow \hatDivInf (X/ \OO_\K)
    \label{<+label+>}
  \end{equation}
  that preserves model, strongly nef, nef and integrable adelic divisors. If $g$ is the Green function of $\overline D
  \in \hatDivInf(Y / \OO_\K)$, then the Green function of $f^* \overline D$ is $g \circ f^{\an}$.
\end{prop}

\section{Picard-Manin space at infinity}
\subsection{Compactifications}
A compactification of $\cM_D$ is a projective surface $X$ with an open embedding $\iota_X : \cM_D \hookrightarrow X$
such that $X$ is smooth in a neighbourhood of $X \setminus \cM_D$. We call
$X \setminus \iota_X (\cM_D)$ the \emph{boundary} of $\cM_D$ in $X$. By \cite[Proposition
1]{goodmanAffineOpenSubsets1969}, it is a connected curve. We will also refer to it at the part "at infinity" in $X$.
For any compactification $X$ of $\cM_D$ we define $\DivInf (X)_\A = \oplus \A E_i$ where $\A = \Z, \Q, \R$ and $X
\setminus \cM_D = \bigcup E_i$, the space of $\A$-divisors at infinity. Notice that since $X$ is smooth at infinity
every element of $\DivInf(X)_\A$ is both an $\A$-Cartier and Weil divisor. For any two
compactifications $X,Y$ we have a
birational map $\pi_{XY} = \iota_Y \circ \iota_X^{-1} : X \dashrightarrow Y$. If this map is regular, we say that
$\pi_{XY}$ is a \emph{morphism of compactifications} and that $X$ is \emph{above} $Y$. For any compactification
$X,Y$ there exists a compactification $Z$ above $X$ and $Y$. Indeed, take $Z$ to be a resolution of indeterminacies of
$\pi_{XY} : X \dashrightarrow Y$. A morphism of compactifications defines a pullback and a pushforward operator
$\pi_{XY}^*, (\pi_{XY})_*$ on divisors and Néron-Severi classes. We have the projection formula,
\begin{equation}
  \forall \alpha \in \NS (X), \beta \in \NS (Y), \quad \alpha \cdot \pi_{XY}^* \beta = (\pi_{XY})_* \alpha \cdot \beta.
  \label{<+label+>}
\end{equation}

Let $\overline \cM_D \subset \P^3$ be the closure of $\cM_D$ in $\P^3$. We have that $\overline \cM_D \setminus \cM_D$
is a triangle of lines all of self intersection $-1$.  The matrix of the intersection form on
$\DivInf(\overline \cM_D)$ is
\begin{equation}
  \begin{pmatrix}
    -1 & 1 & 1 \\
    1 & -1 & 1 \\
    1 & 1 & -1
  \end{pmatrix}
  \label{<+label+>}
\end{equation}
Therefore, the intersection form is non-degenerate over $\DivInf(\overline \cM_D)_\A$ and we have the embedding
\begin{equation}
  \DivInf(\overline \cM_D)_\A \hookrightarrow \NS (\overline \cM_D)_\A.
  \label{<+label+>}
\end{equation}
And this holds for every compactification $X$ of $\cM_D$.

\subsection{Weil and Cartier classes}
If $\pi_{YX} : Y \rightarrow X$ are two compactifications of $\cM_D$ then we have the embedding defined by the
pullback operator
\begin{equation}
  \pi_{YX}^* : \DivInf(X)_\A \hookrightarrow \DivInf(Y)_\A.
  \label{<+label+>}
\end{equation}
We define the space of Cartier divisors at infinity of $\cM_D$ to be the direct limit
\begin{equation}
  \cC (\cM_D) := \varinjlim_{X} \DivInf(X)_\R.
  \label{<+label+>}
\end{equation}
In the same way we define the space of Cartier classes of $\cM_D$
\begin{equation}
  \cZ (\cM_D) := \varinjlim_X \NS(X)_\R.
  \label{<+label+>}
\end{equation}
An element of $\cZ (\cM_D)$ is an equivalence class of pairs $(X, \alpha)$ where $X$ is a compactification of $\cM_D$ and
$\alpha \in \NS (X)_\R$ such that $(X, \alpha) \simeq (Y, \beta)$ if and only if there exists a compactification $Z$ above
$X,Y$ such that $\pi_{ZX}^* \alpha = \pi^*_{ZY} \beta$. We say that $\alpha \in \cZ (\cM_D)$ is \emph{defined} in $X$ if it
is represented by $(X, \alpha)$.
We have a natural embedding $\cC (\cM_D) \hookrightarrow \cZ(\cM_D)$, we still write $\cC (\cM_D)$ for its image in $\cZ
(\cM_D)$. We also define the space of Weil classes
\begin{equation}
  \hat \cZ (\cM_D) := \varprojlim_{X} \NS(X)_\R
  \label{<+label+>}
\end{equation}
where the compatibility morphisms are given by the pushforward morphisms $(\pi_{YX})_* : \NS (Y) \rightarrow \NS(X)$ for
a morphism of
compactifications $\pi_{YX}: Y \rightarrow X$.
An element of this inverse limit is a family $\alpha = (\alpha_X)_X$ such that if $X,Y$ are two compactifications of $\cM_D$
with $Y$ above $X$, then $(\pi_{YX})_* \alpha_Y = \alpha_X$. We call $\alpha_X$ the \emph{incarnation} of
$\alpha$ in $X$. We have a natural embedding $\cZ (\cM_D) \hookrightarrow \hat \cZ (\cM_D)$. We also define the space of Weil
divisors at infinity
\begin{equation}
  \cW (\cM_D) := \varprojlim_{X} \DivInf(X)_\R
  \label{<+label+>}
\end{equation}
and we have the commutating diagram
\begin{equation}
  \begin{tikzcd}
    \cC(\cM_D) \ar[r, hook] \ar[d, hook] & \cZ (\cM_D) \ar[d, hook] \\
    \cW(\cM_D) \ar[r, hook] & \hat \cZ(\cM_D)
  \end{tikzcd}
  \label{<+label+>}
\end{equation}
Thanks to the projection formula, the intersection form defines a perfect pairing
\begin{equation}
  \cZ (\cM_D) \times \hat \cZ (\cM_D) \rightarrow \R
  \label{<+label+>}
\end{equation}
defined as follows. If $\alpha \in \cZ (\cM_D)$ is defined in $X$ and $\beta \in \hat \cZ (\cM_D)$, then
\begin{equation}
  \alpha \cdot \beta = \alpha_X \cdot \beta_X
  \label{<+label+>}
\end{equation}
An element $\alpha \in \cW(\cM_D)$ is \emph{effective} if for every compactification $X$, $\alpha_X$ is an effective
divisor. We write $\alpha \geq \beta$ if $\alpha - \beta$ is effective. An element $\beta \in \hat \cZ (\cM_D)$ is
\emph{nef} if for every compactification $X, \beta_X$ is nef.

\subsection{The Picard-Manin space of $\cM_D$}
We provide $\hat \cZ (\cM_D)$ with the topology of the inverse limit, we call it the weak topology, $\cZ(\cM_D)$ is
dense in $\hat \cZ (\cM_D)$ for this
topology. Analogously, $\cC (\cM_D)$ is dense in $\cW(\cM_D)$.

We define $\cD_\infty$ for the set of prime divisors at infinity. An element of $\cD_\infty$ is an equivalence class of
pairs $(X,E)$ where $X$ is a compactification of $\cM_D$ and $E$ is a prime divisor at infinity. Two pairs $(X,E), (Y, E')$
are equivalent if the birational map $\pi_{XY}$ sends $E$ to $E'$. We will just write $E \in \cD_\infty$ instead of
$(X,E)$. We define the function $\ord_E : \cW (\cM_D) \rightarrow \R$ as follows. Let $\alpha \in \cW(\cM_D)$, if $X$ is any
compactification where $E$ is defined (in particular $(X,E)$ represents $E \in \cD_\infty$), then $\alpha_X$ is of the form
\begin{equation}
  \alpha_X = a_E E + \sum_{F \neq E} a_F F
  \label{<+label+>}
\end{equation}
and we set $\ord_E (\alpha_X) = a_E$. This does not depend on the choice of $(X,E)$.
\begin{lemme}[\cite{boucksomDegreeGrowthMeromorphic2008} Lemma 1.5]\label{LemmeWeakTopology}
  The map
  \begin{equation}
    \alpha \in \cW (\cM_D) \mapsto (\ord_E (\alpha))_{E \in \cD_\infty} \in \R^{\cD_\infty}
    \label{<+label+>}
  \end{equation}
  is a homeomorphism for the product topology.
\end{lemme}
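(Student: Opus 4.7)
The plan is to check that the map $\Phi : \alpha \mapsto (\ord_E(\alpha))_{E \in \cD_\infty}$ is a continuous bijection with continuous inverse, working one property at a time. The key observation guiding the whole argument is that for any single completion $X$ of $\cM_D$, the boundary $X \setminus \cM_D$ is a curve with finitely many irreducible components (invoking \cite{goodmanAffineOpenSubsets1969} Proposition 1 as already quoted), so each $\DivInf(X)_\R$ is finite dimensional and its natural basis is indexed by those prime divisors at infinity that appear on $X$.

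For \emph{injectivity}, if $\ord_E(\alpha) = 0$ for every $E \in \cD_\infty$, then for every completion $X$ the incarnation $\alpha_X \in \DivInf(X)_\R$ has every coefficient equal to zero, hence $\alpha_X = 0$, hence $\alpha = 0$. For \emph{surjectivity}, given $(a_E)_{E\in \cD_\infty} \in \R^{\cD_\infty}$, I define, for each completion $X$, the finite sum
\begin{equation}
  \alpha_X := \sum_{E \text{ prime in } X \setminus \cM_D} a_E \, E \ \in \ \DivInf(X)_\R.
\end{equation}
The only thing to verify is compatibility under pushforward along any morphism of completions $\pi_{YX} : Y \to X$. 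Since $\pi_{YX}$ is birational between projective surfaces, each prime divisor at infinity of $Y$ is either the strict transform of a prime divisor at infinity of $X$ (in which case the two represent the same class in $\cD_\infty$, so carry the same coefficient $a_E$) or is contracted by $\pi_{YX}$ (in which case it contributes $0$ to the pushforward). Thus $(\pi_{YX})_* \alpha_Y = \alpha_X$, which is exactly the compatibility condition defining an element of $\cW(\cM_D)$; and by construction $\ord_E(\alpha) = a_E$.

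For the \emph{topological} part, both directions are immediate from the finite dimensionality of each $\DivInf(X)_\R$. Continuity of $\Phi$ holds because each coordinate projection $\alpha \mapsto \ord_E(\alpha)$ factors through the projection $\alpha \mapsto \alpha_X$ for some completion $X$ where $E$ is defined, which is continuous by definition of the inverse limit topology, followed by a coordinate on the finite dimensional space $\DivInf(X)_\R$. Conversely, continuity of $\Phi^{-1}$ reduces to checking that for each completion $X$, the assignment $(a_E)_{E \in \cD_\infty} \mapsto \alpha_X$ is continuous from the product topology to the finite dimensional space $\DivInf(X)_\R$; this is clear because $\alpha_X$ depends only on finitely many of the coordinates $a_E$, namely those indexed by the prime divisors at infinity on $X$.

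The only step that requires any care is the compatibility check for surjectivity, and that is where I would be most explicit: it hinges only on the fact that pushforward by a birational morphism of smooth surfaces sends a prime divisor either to its image (with multiplicity one) or to zero, and on the definition of the equivalence relation on pairs $(X,E)$ defining $\cD_\infty$. Everything else is formal.
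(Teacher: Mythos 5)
Your proof is correct. The paper itself offers no argument for this lemma---it simply cites \cite{boucksomDegreeGrowthMeromorphic2008} Lemma 1.5---so there is nothing to compare against beyond the original reference, whose argument for the general Picard--Manin space is essentially the same formal check of injectivity, surjectivity, and continuity in both directions that you carry out. Your verification of the pushforward compatibility in the surjectivity step is the one place where something substantive happens, and you identify it correctly: a morphism of completions $\pi_{YX}\colon Y\to X$ is an isomorphism over $\cM_D$ (since both embeddings of $\cM_D$ are open and $\pi$ is proper birational, Zariski's main theorem forces $\pi_{YX}^{-1}(\cM_D)=\cM_D$), so every prime divisor at infinity of $Y$ is either contracted or is the strict transform of one on $X$, carrying the same index in $\cD_\infty$ and hence the same coefficient $a_E$. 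The continuity arguments follow from finite-dimensionality of each $\DivInf(X)_\R$ exactly as you say. One small remark: you invoke smoothness when discussing pushforward of prime divisors, but the argument really only needs that pushforward of a prime divisor under a proper birational map is the image (when the image is a divisor) or zero (when contracted), which holds for normal completions without a smoothness hypothesis.
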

We can define a stronger topology on $\cZ (\cM_D)$ as follows. The intersection product defines a non-degenerate bilinear form
\begin{equation}
  \cZ (\cM_D) \times \cZ (\cM_D) \rightarrow \R
  \label{<+label+>}
\end{equation}
of signature $(1, \infty)$ by the Hodge Index Theorem.
 Take an ample class $\omega$ on some compactification $X$ of $\cM_D$ such that $\omega^2 =1$. Every Cartier class $\alpha \in
 \cZ (\cM_D)$ can be decomposed with respect to $\omega$ and $\omega^\perp$
\begin{equation}
  \alpha = (\alpha \cdot \omega) \omega + (\alpha - (\alpha \cdot \omega)\omega).
  \label{<+label+>}
\end{equation}
By the Hodge index theorem, the intersection form is negative definite on $\omega^\perp$ and we define the norm
\begin{equation}
  \left\|  \alpha  \right\|_\omega^2 = (\alpha \cdot \omega)^2 - (\alpha - (\alpha \cdot \omega)\omega)^2.
  \label{<+label+>}
\end{equation}
This defines a topology on $\cZ (\cM_D)$ which is independent of the choice of $\omega$. We call it the strong topology. We
define $\overline \cZ (\cM_D)$ to be the compactification of $\cZ (\cM_D)$ with respect to this topology. As this topology is
stronger than the weak topology, $\overline \cZ (\cM_D)$ is a subspace of $\hat \cZ (\cM_D)$.
This is a Hilbert space and
the intersection product extends to a continuous non-degenerate bilinear form
\begin{equation}
  \overline \cZ (\cM_D) \times \overline \cZ (\cM_D) \rightarrow \R.
  \label{<+label+>}
\end{equation}
We call $\overline \cZ (\cM_D)$ the \emph{Picard-Manin} space of $\cM_D$. We also write $\overline \cC (\cM_D)$ for the closure
of $\cC (\cM_D)$ for the strong topology.
We have in particular that every nef class in $\hat \cZ (\cM_D)$ belongs to $\overline \cZ (\cM_D)$ (see
\cite{boucksomDegreeGrowthMeromorphic2008} Proposition 1.4).

\begin{rmq}
  In \cite{boucksomDegreeGrowthMeromorphic2008} or \cite{cantatNormalSubgroupsCremona2013}, the Picard-Manin space is
  defined by allowing blow up with arbitrary centers not only at infinity. Since we study dynamics of automorphism of
  $\cM_D$ the indeterminacy points are only at infinity. This justifies our restricted definition of the Picard-Manin
  space. A similar construction is used in \cite{favreDynamicalCompactificationsMathbf2011} for the affine plane.
\end{rmq}

\subsection{Spectral property of the dynamical degree}
If $f \in \Aut (\cM_D)$ we define the operator $f^*$ on $\cZ(\cM_D)$ as follows. Let $\alpha \in \cZ(\cM_D)$
defined in a compactification $X$. Let $Y$ be a compactification of $\cM_D$ such that the lift $F: Y \rightarrow X$ of $f$ is
regular. We define $f^* \alpha$ as the Cartier class defined by $F^* \alpha$. This does not depend on the choice of $X$
or $Y$. We write $f_*$ for $(f^{-1})^*$. If $X$ is a compactification of $\cM_D$, we write $f_X^* : \DivInf(\cM_D) \rightarrow
\DivInf(\cM_D)$ for the following operator:
\begin{equation}
  f_X^* (D) = (f^* D)_X
  \label{<+label+>}
\end{equation}
where we consider the class of $D$ and $f^*D$ in $\cC (\cM_D)$. We also define the operator $f_X^* : \NS(X) \rightarrow
\NS(X)$ in a similar way.
\begin{prop}[Proposition 2.3 and Theorem 3.2 of \cite{boucksomDegreeGrowthMeromorphic2008}]
  The operator $f^*$ extends to a continuous bounded operator $f^* : \overline \cZ (\cM_D) \rightarrow
  \overline \cZ (\cM_D)$ that satisfy the following conditions.
  \begin{enumerate}
    \item $f^* \alpha \cdot \beta = \alpha \cdot f_* \beta$.
    \item $f^* \alpha \cdot f^* \beta = \alpha \cdot \beta$.
      \item $\lambda_1 (f)$ is the spectral radius and an eigenvalue of $f^*$.
  \end{enumerate}
\end{prop}
If $\lambda_1 (f) > 1$, then $\lambda_1$ is simple and there is a spectral gap property.
\begin{thm}[Theorem 3.5 of \cite{boucksomDegreeGrowthMeromorphic2008}]\label{ThmDynamicAutomorphismPicardManin}
  Let $f$ be a loxodromic automorphism of $\cM_D$, there exist nef elements $\theta_f^+, \theta_f^- \in \overline \cC
  (\cM_D)$ unique up to multiplication by a positive constant such that the following hold.
  \begin{enumerate}
  \item $\theta_f^+$ and $\theta_f^-$ are effective.
  \item $(\theta_f^+)^2 = (\theta_f^-)^2 = 0$, $ \theta_f^+ \cdot \theta_f^- = 1$.
  \item $f^* \theta_f^+ = \lambda_1 \theta_f^+$, $(f^{-1})^* \theta_f^- = \lambda_1 \theta_f^-$.
  \item For any $\alpha \in \overline \cZ (\cM_D)$,
    \begin{equation}
      \frac{1}{\lambda_1^N} (f^{\pm N})^* \alpha = \left(\theta_f^\mp \cdot \alpha\right) \theta_f^\pm + O_\alpha
      \left(\frac{1}{\lambda_1^N}\right).
      \label{<+label+>}
    \end{equation}
    \label{<+label+>}
  \end{enumerate}
\end{thm}
\begin{proof}
  The only assertion not following from \cite{boucksomDegreeGrowthMeromorphic2008} is that $\theta_f^+$ and
  $\theta_f^-$ are effective and belong to $\overline \cC$. We show the result for $\theta_f^+$. Following the proof of
  \cite{boucksomDegreeGrowthMeromorphic2008} Theorem 3.2, we have that $\theta_f^+$ is obtained as a limit of a
  sequence of Cartier divisors $(X_n, \theta_n)$ on compactifications of $\cM_D$ such that $f_{X_n}^* \theta_n = \rho_n
  \theta_n$ where $\rho_n$ is the spectral radius of $f_{X_n}^*$. The existence of $\theta_f^+$ follows from the fact that
  $\rho_n \rightarrow \lambda_1$. We show that $\theta_n$ can be chosen nef, effective and in $\DivInf(X_n)_\R$. Fix a
  compactification $X$ of $\cM_D$, $f_X^*$ preserves the nef cone $\Nef(X)$ of $\NS (X)$. It also preserves the subcone of
  $\Nef (X)$ consisting of effective divisors supported at infinity. This subcone is nonempty because for example in the
  compactification $\overline \cM_D \subset \P^3$, the Cartier divisor
  \begin{equation}
    H = \left\{ X = T = 0 \right\} + \left\{ Y = T = 0 \right\} + \left\{ Z = T = 0 \right\}
    \label{<+label+>}
  \end{equation}
  is very ample as it is equal to $\overline \cM_D \cap \left\{ T = 0 \right\}$. By Perron-Frobenius theorem, there exists
  $\theta_X$ in this subcone such that $f_X^* \theta_X = \rho_X \theta_X$ with $\rho_X$ the spectral radius of $f_X^*$.
\end{proof}

\subsection{Compatibility with adelic divisors}\label{SubSecCompatibilityAdelicDivisor}
We have a forgetful group homomorphism
\begin{equation}
  w: \hatDivInf(\cM_D / \OO_\K) \rightarrow \cW(\cM_D)
  \label{<+label+>}
\end{equation}
defined as follows. Let $\cU$ be a quasiprojective model of $\cM_D$ over $\OO_\K$ and let $\overline \sD$ be a model adelic
divisor on $\cU$. Then, $w (\overline \sD) = \sD_{\K}$ is the horizontal part of $\sD$, this is an element of $\cC
(\cM_D)$ because $\overline \sD$ is supported at infinity.

\begin{prop}
  The group homomorphism $w$ extends to a continuous group homomorphism
  \begin{equation}
    w : \hatDivInf(\cM_D / \OO_\K) \rightarrow \cW (\cM_D)
    \label{<+label+>}
  \end{equation}
  such that if $\overline D$ is integrable then $w (\overline D) \in \overline \cC (\cM_D)$ and if $f \in \Aut(\cM_D)$,
  then
  \begin{equation}
    w(f^* \overline D) = f^* w (\overline D).
    \label{eq:functioriality}
  \end{equation}
\end{prop}
\begin{proof}
  Let $\overline D \in \hatDivInf (\cM_D / \OO_\K)$ be given by a Cauchy sequence of model adelic divisors $(\overline
  \sD_i)$. Let $X$ be a compactification of $\cM_D$. There exists a sequence $\epsilon_i$ converging to zero such that
  \begin{equation}
  - \epsilon_i \overline \sD_0 \leq \overline \sD_j - \overline \sD_i \leq \epsilon_i \overline \sD_0
    \label{<+label+>}
  \end{equation}
  Applying $w$, we get (write $D_j = w(\overline \sD_j)$)
  \begin{equation}
    -\epsilon_i D_0 \leq D_j - D_i \leq \epsilon_i D_0
    \label{<+label+>}
  \end{equation}
  Thus, for every $E \in \sD_\infty, \ord_E (D_i)$ is a Cauchy sequence and converges to a number $\ord_E (D)$. By Lemma
  \ref{LemmeWeakTopology} this defines a Weil divisor $w (D) \in \cW (\cM_D)$. It is clear that $w$ is continuous, again
  using Lemma \ref{LemmeWeakTopology}.

  If $\overline D$ is integrable, then it is the difference of two strongly nef adelic divisors and nef classes in $\cW
  (\cM_D)$ belong to $\cC (\cM_D)$.

  If $f \in \Aut (\cM_D)$, it suffices to show \eqref{eq:functioriality} for model adelic divisors. Let $\overline \sD$
  be a model adelic divisor and let $\sX$ be a projective model of $\cM_D$ over $\OO_\K$ where $\sD$ is defined. There
  exists a projective model above $\sY$ of $\cM_d$ over $\OO_\K$ such that the lift of $f$ extends to a regular map
  $F : \sY \rightarrow \sX$ and $f^* \overline \sD = \overline{F^* \sD}$. Looking at the generic fiber we get
  \eqref{eq:functioriality}.
\end{proof}
We will drop the notation $w(\overline D)$ and just write $D = w(\overline D)$.

\section{Representation theory}\label{SecRepresentationTheory}
\subsection{Character varieties and the Markov surfaces}\label{SubsecCharactervarieties}
Let $\T_1$ be the once punctured torus. The fundamental group $\pi_1 (\T_1)$ is a free group generated by
two elements $a$ and $b$. The commutator $[a,b] := ab  a^{-1} b^{-1}$ is represented by a simple loop
around the puncture that follows the orientation of the surface. One can study the representation of
$\pi_1 (\T_1)$ into the affine variety
$\SL_2 (\C)$. It is clear that
\begin{equation}
  \hom (\pi_1 (\T_1), \SL_2 (\C)) \simeq \SL_2 (\C) \times \SL_2 (\C)
  \label{<+label+>}
\end{equation}
as $\pi_1 (\T_1)$ is a free group on two generators, therefore it is an affine variety. Define the character variety,
\begin{equation}
  \cX := \hom (\pi_1 (\T_1), \SL_2 (\C)) / / \SL_2 (\C)
  \label{<+label+>}
\end{equation}
where the action of $\SL_2 (\C)$ is diagonal and given by conjugation and $/ /$ is the Geometric
Invariant Theory (GIT) quotient. This is also an affine
variety and we have the following result of Fricke and Klein.

\begin{thm}[Fricke, Klein, \cite{goldmanTraceCoordinatesFricke2009}]
  The algebraic variety $\cX$ is isomorphic to $\C^3$. The isomorphism is given by
  \begin{equation}
    [\rho] \in \cX \mapsto (\Tr (\rho (a)), \Tr (\rho (b)), \Tr (\rho (ab))).
    \label{<+label+>}
  \end{equation}
\end{thm}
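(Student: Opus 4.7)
The plan is to identify the coordinate ring of $\cX$, which by construction is the ring of $\SL_2(\C)$-invariants on $\hom(\pi_1(\T_1),\SL_2(\C))\simeq\SL_2(\C)^2$, with the polynomial algebra $\C[x,y,z]$, where $x,y,z$ denote the three trace functions $\Tr(\rho(a)),\Tr(\rho(b)),\Tr(\rho(ab))$. Since these three functions are regular on $\SL_2(\C)^2$ and invariant under simultaneous conjugation, they define a morphism $\Phi:\cX\to\C^3$, and it suffices to show that the pullback $\Phi^*:\C[x,y,z]\to\C[\cX]$ is both surjective (every invariant is a polynomial in $x,y,z$) and injective (no nonzero polynomial in $x,y,z$ vanishes on $\cX$).

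For the surjectivity of $\Phi^*$, I would invoke (or briefly recall) Procesi's theorem, which states that the invariant ring of $\SL_2(\C)$ acting by simultaneous conjugation on tuples of matrices in $\SL_2(\C)$ is generated by traces of monomials in the generators and their inverses. Combined with the Cayley--Hamilton identity $M+M^{-1}=\Tr(M)\cdot I$ for $M\in\SL_2(\C)$, which yields the fundamental trace identity
\begin{equation*}
  \Tr(MN)+\Tr(M^{-1}N)=\Tr(M)\,\Tr(N),
\end{equation*}
an induction on word length in the free group $\pi_1(\T_1)=\langle a,b\rangle$ expresses the trace of any word $w(a,b)$ as a polynomial in $x=\Tr(\rho(a))$, $y=\Tr(\rho(b))$, and $z=\Tr(\rho(ab))$. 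This is the algebraic heart of the statement and is the step that will take the most care: one reduces a general word to a sum of words involving only $a$, $b$, and $ab$ by repeated application of the identity above, and must check that the reduction terminates.

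For injectivity of $\Phi^*$, I would show that $\Phi$ is surjective on $\C$-points, since an invariant polynomial in $x,y,z$ that vanishes on all of $\cX$ would then vanish identically on $\C^3$. Given arbitrary $(x_0,y_0,z_0)\in\C^3$, one writes down explicit matrices realizing these traces, for instance
\begin{equation*}
  A=\begin{pmatrix} x_0 & -1 \\ 1 & 0 \end{pmatrix},\qquad
  B=\begin{pmatrix} 0 & \zeta \\ -\zeta^{-1} & y_0 \end{pmatrix}
\end{equation*}
for an appropriate $\zeta\in\C^\times$, and checks that one can solve for $\zeta$ so that $\Tr(AB)=z_0$; the computation reduces to a linear equation in $\zeta+\zeta^{-1}$.

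Combining the two steps, $\Phi^*$ is an isomorphism of $\C$-algebras, so $\Phi$ is an isomorphism of affine varieties $\cX\xrightarrow{\sim}\C^3$. The main obstacle is the generation/reduction step via the trace identity; the surjectivity part, while unavoidable, is a concrete matrix computation, and the invariant-theoretic input from Procesi is classical. One should also note that injectivity of $\Phi$ on GIT-equivalence classes is \emph{a posteriori} forced by the isomorphism of coordinate rings, so no separate argument that ``equal traces imply conjugacy'' (which requires distinguishing the reducible locus) is needed.
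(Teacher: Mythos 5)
The paper does not prove this theorem; it is stated and attributed to Fricke--Klein via the citation to Goldman's survey, so there is no internal proof to compare your proposal against. Your sketch follows the standard, correct line of argument, and the details check out.

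A few remarks on the mechanics. Your choice of matrices is sound: with $A=\bigl(\begin{smallmatrix} x_0 & -1 \\ 1 & 0\end{smallmatrix}\bigr)$ and $B=\bigl(\begin{smallmatrix} 0 & \zeta \\ -\zeta^{-1} & y_0\end{smallmatrix}\bigr)$ one computes $AB=\bigl(\begin{smallmatrix} \zeta^{-1} & x_0\zeta - y_0 \\ 0 & \zeta\end{smallmatrix}\bigr)$, so $\Tr(AB)=\zeta+\zeta^{-1}$, and $\zeta^2-z_0\zeta+1=0$ always has a solution in $\C^\times$; thus $(A,B)\mapsto(\Tr A,\Tr B,\Tr AB)$ is surjective onto $\C^3$, which gives the injectivity of $\Phi^*$ since $\cX$ is reduced. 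Your final observation that no separate ``equal traces implies $\SL_2$-conjugacy'' argument is needed is also correct and worth keeping: the GIT quotient is by definition $\spec$ of the invariant ring, so separation of orbit closures is automatic once the ring is identified with $\C[x,y,z]$. One small stylistic point: invoking Procesi's theorem for $2\times 2$ matrices is stronger than needed and slightly anachronistic; the generation of the invariant ring by $\Tr(a)$, $\Tr(b)$, $\Tr(ab)$ on the rank-two free group is the classical Vogt--Fricke result and can be proved self-containedly by the word-length induction you outline using the single trace identity $\Tr(MN)+\Tr(M^{-1}N)=\Tr(M)\Tr(N)$, together with the basic identities $\Tr(M^{-1})=\Tr(M)$ and cyclic invariance. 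Either route is valid.
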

We will denote by $(x,y,z) = (\Tr (\rho (a)), \Tr (\rho (b)), \Tr (\rho (ab)))$ these are the
\emph{Frick-Klein} coordinates.
Let $\kappa : \cX \rightarrow \C$ be the
regular function
\begin{equation}
  \kappa (\rho) = \Tr (\rho([a,b]))
  \label{<+label+>}
\end{equation}
where $[a,b] = ab a^{-1} b^{-1}$.
One can show that
\begin{equation}
  \kappa = x^2 + y^2 + z^2 - xyz - 2
  \label{<+label+>}
\end{equation}

Therefore, if $\cX_t = \kappa^{-1} (t)$ is the relative character variety, we have
\begin{equation}
  \cX_t = \cM_{t+2}
  \label{<+label+>}
\end{equation}
where $\cM_{t+2}$ is the Markov surface of parameter $t+2$. In particular, the parameter $D = 4$ corresponds to
$t = 2$ and every points in $\cM_4$ corresponds to a reducible representation.
\subsection{Automorphism group of the Markov surfaces}\label{SubSecIntroAutGroupOfMarkovSurface}

The generalized mapping class group $\Mod^* (\T_1)$ is the group of homotopy class of homeomorphism
of $T_1$ (not necessarily orientation preserving). It contains $\Mod(\T_1)$ as an index 2 subgroup and
it acts on $\pi_1 (\T_1)$, we have the following
isomorphism:
\begin{equation}
  \Mod^*(\T_1) \simeq \Out (\pi_1 (\T_1))
  \label{<+label+>}
\end{equation}

Furthermore,
\begin{equation}
  \Out (\pi_1 (\T_1)) \simeq \GL_2 (\Z).
  \label{<+label+>}
\end{equation}
For any element $\Phi \in \Out (\pi_1 \left( \T_1 \right)), \Phi ([a,b])$ is conjugated to $
[a,b]^\pm$. This implies, that the action of $\Mod^* (\T_1)$ on $\cX$ preserves every $\cX_t$. Now,
the matrix $-\id$ acts trivially, because in $\SL_2 (\C)$ we have that $\Tr A
= \Tr A^{-1}$, so for all $D \in \C$ we get a group homomorphism
\begin{equation}
  \PGL_2 (\Z) \rightarrow \Aut (\cM_D)
  \label{EqGroupHomoGL2AutMarkov}
\end{equation}

\begin{thm}[\cite{cantatHolomorphicDynamicsPainleve2007} Theorem A and B,
  \cite{el-hutiCubicSurfacesMarkov1974}]\label{ThmAutMarkov}
  Let $\Gamma^* \subset \PGL_2 (\Z)$ be the subgroup of element congruent to the identity modulo 2, then for
  any $D \in \C$,
  \begin{equation}
    \Gamma^* \rightarrow \Aut (\cM_D)
    \label{<+label+>}
  \end{equation}
  is injective and its image is of index at most 8. Furthermore if $\Phi \in \Gamma^*$ and $\rho$ is its spectral
  radius, then $\rho = \lambda_1 (f_\Phi)$ where $f_\Phi$ is the automorphism of $\cM_D$ induced by $\Phi$.
  Furthermore, the topological entropy of $f_\Phi$ is equal to $\log \lambda_1 (f_\Phi)$.
\end{thm}
We can describe the group homomorphism. Let $\sigma_x \in \Aut (\cM_D)$ be the automorphism
\begin{equation}
  \sigma_x (x,y,z) = (yz -x, y, z),
  \label{<+label+>}
\end{equation}
If we fix the coordinates $y,z$, then the equation defining $\cM_D$ becomes a polynomial equation of
degree 2 with respect to $x$, $\sigma_x$ permutes the 2 roots of this equation. We can define
$\sigma_y, \sigma_z$ in the same way. Then, $\sigma_x, \sigma_y, \sigma_z$ generate a free group
isomorphic to $(\Z /2 \Z) * (\Z /2 \Z) * (\Z /2 \Z)$ which is of finite index in $\Aut (\cM_D)$
(see \cite{el-hutiCubicSurfacesMarkov1974}). The subgroup $\Gamma^*$ is the free group on the three
generators
\begin{equation}
  \begin{pmatrix}-1 & -2 \\ 0 & 1 \end{pmatrix}, \quad \begin{pmatrix}
    1 & 0 \\ -2 & -1
  \end{pmatrix}, \quad \begin{pmatrix}
    1 & 0 \\ 0 & -1
  \end{pmatrix}
  \label{EqMatrixOfGenerators}
\end{equation}
which correspond respectively to $\sigma_x, \sigma_y, \sigma_z$. For a more detailed description of the action of
$\GL_2 (\Z)$ on the character variety, see the appendix of \cite{goldmanModularGroupAction2003}.
\subsection{Fuchsian and Quasi-Fuchsian representation} The general reference for this part is
\cite{mcmullenRenormalization3ManifoldsWhich1996}.
Let $\HH^n$ be the hyperbolic space of dimension $n$. Recall that $\Isom(\HH^2) = \PSL_2 (\R)$ and $\Isom (\HH^3) =
\PSL_2(\C)$. The boundary of $\HH^2$ is naturally homeomorphic to $\CS^1 = \P^1 (\R)$ and the boundary of $\HH^3$ is naturally
homeomorphic to $\P^1 (\C) := \hat \C$.
A \emph{Fuchsian} group is a discrete subgroup $\Gamma$ of $\PSL_2 (\R)$. A \emph{Quasi-Fuchsian}
group is a discrete subgroup $\Gamma$ of $\PSL_2 (\C)$ such that its limit set \footnote{The limit set of $\Gamma$ is
the set of accumulation points in $\hat \C$ of $\Gamma \cdot x$ where $x \in \HH^3$ is any point.} in $\hat \C$ is a Jordan curve. Let $S$
be an oriented compact (real) surface of negative Euler characteristic. We say that
a representation $\rho: \pi_1 (S) \rightarrow \SL_2 (\C)$ is Fuchsian (resp, Quasi-Fuchsian) if
$\overline \rho (S) \subset \PSL_2 (\C)$ is Fuchsian (resp. Quasi-Fuchsian). We will denote by $\QF(S)$ the set of
Quasi-Fuchsian representation of $\pi_1(S)$ in $\PSL_2 (\C)$.

The \emph{Character variety} of $S$ is the algebraic variety $\cX(S) := \hom (\pi_1 (S) , \SL_2 (\C)) // \SL_2
(\C)$ where $/ /$ is the GIT quotient by the action of $\SL_2(\C)$ by conjugation.
The \emph{mapping class group} $\Mod (S)$ of $S$
is the group of orientation preserving homeomorphism of $S$ modulo the ones homotopic to the identity. It acts on the
Character variety.

Let $\Teich (S)$ be the Teichmuller space of $S$, that is the set of complete finite
hyperbolic metrics over $S$. The mapping class group $\Mod(S)$ acts on it. Every point of $\Teich(S)$ induces a Fuchsian
representation of $S$. We can actually parametrize the set of Quasi-Fuchsian representations $\QF(S)$ of $S$ using $\Teich(S)$ by
the simultaneous uniformization theorem of Bers.

\begin{thm}[\cite{bersSimultaneousUniformization1960}]
  There is a biholomorphic map
  \begin{equation}
    \Bers: \Teich(S) \times {\Teich(\overline S)} \rightarrow \QF(S)
    \label{<+label+>}
  \end{equation}
where $\overline S$ is the surface $S$ with its reversed orientation.
\end{thm}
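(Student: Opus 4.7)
\medskip

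\noindent\textbf{Proof proposal.} The plan is to construct the map $\Bers$ and its inverse by combining the complex structure on the two components of $\hat\C \setminus \Lambda(\Gamma)$ (the limit set) with the Ahlfors--Bers measurable Riemann mapping theorem.

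First I would define the map $\QF(S) \to \Teich(S) \times \Teich(\overline S)$. Let $\rho \in \QF(S)$ and let $\Gamma = \overline{\rho}(\pi_1(S)) \subset \PSL_2(\C)$. By hypothesis the limit set $\Lambda(\Gamma)$ is a Jordan curve in $\hat\C$, so its complement is a disjoint union of two Jordan domains $\Omega^+ \sqcup \Omega^-$. The group $\Gamma$ acts properly discontinuously and freely on each $\Omega^\pm$ by Möbius transformations (hence by biholomorphisms), and the quotients $\Omega^+/\Gamma$ and $\Omega^-/\Gamma$ inherit natural Riemann surface structures, hence, by the uniformization theorem on each of them, complete finite hyperbolic metrics. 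Using that $\Gamma$ is quasiconformally conjugate to a Fuchsian group, one checks that as marked surfaces the quotients are homeomorphic to $S$ and $\overline S$ respectively, so they define points of $\Teich(S)$ and $\Teich(\overline S)$. This gives a well-defined map $\Psi : \QF(S) \to \Teich(S) \times \Teich(\overline S)$.

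Next I would construct the inverse $\Bers$. Fix a basepoint $X_0 \in \Teich(S)$ represented by a Fuchsian group $\Gamma_0 \subset \PSL_2(\R)$ acting on the upper half-plane $\HH$ with quotient $X_0$ and on the lower half-plane $\HH^-$ with quotient $\overline{X_0}$. Given $(X,Y) \in \Teich(S) \times \Teich(\overline S)$, choose quasiconformal maps realising the markings; their Beltrami differentials pull back to $\Gamma_0$-invariant Beltrami coefficients $\mu^+$ on $\HH$ (for $X$) and $\mu^-$ on $\HH^-$ (for $Y$). Define
\begin{equation}
  \mu(z) = \begin{cases} \mu^+(z) & z \in \HH, \\ \mu^-(z) & z \in \HH^-. \end{cases}
\end{equation}
By the measurable Riemann mapping theorem there is a unique quasiconformal homeomorphism $w^\mu : \hat\C \to \hat\C$ with Beltrami coefficient $\mu$, normalised to fix $0,1,\infty$. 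Since $\mu$ is $\Gamma_0$-invariant, the group $\Gamma_\mu := w^\mu \Gamma_0 (w^\mu)^{-1}$ consists of Möbius transformations, and its limit set is the Jordan curve $w^\mu(\R \cup \{\infty\})$. Therefore $\Gamma_\mu$ is quasi-Fuchsian, and by construction the marked quotients of its two invariant discs recover $(X,Y)$. Setting $\Bers(X,Y) := [\Gamma_\mu]$ gives the desired map, and $\Psi \circ \Bers = \id$ by construction; injectivity on the $\QF$ side follows from Mostow-type rigidity of the normalised solution to the Beltrami equation.

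Finally I would upgrade this set-theoretic bijection to a biholomorphism. Holomorphic dependence is the standard Ahlfors--Bers statement: the map $\mu \mapsto w^\mu$ depends holomorphically on $\mu$ viewed as a point of the unit ball of $L^\infty$, and the Teichmüller spaces $\Teich(S), \Teich(\overline S)$ are exactly the quotients of such balls of $\Gamma_0$-invariant Beltrami coefficients by the equivalence relation induced by post-composition with Möbius transformations of the two boundary circles. Pushing this through yields holomorphy of $\Bers$, and holomorphy of $\Psi$ follows from the smooth dependence of the two quotient complex structures on the generators of $\Gamma$.

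The main obstacle in this approach is the measurable Riemann mapping theorem together with its parameter version: one must show both the existence of $w^\mu$ for arbitrary $\Gamma_0$-invariant Beltrami differentials in the unit ball of $L^\infty(\hat\C)$ and that $w^\mu$ depends holomorphically on $\mu$ in the appropriate Banach-analytic sense. Since this is a deep analytic input available from Ahlfors--Bers, the rest of the argument is essentially formal bookkeeping about quotient Riemann surfaces and equivariance.
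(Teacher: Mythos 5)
The paper does not actually prove this theorem; it is a black-box citation of Bers's 1960 simultaneous uniformization paper, so the right comparison is with the standard proof rather than with anything in the text. Your sketch is precisely that standard argument: read off the two quotient Riemann surfaces from the two sides of the Jordan-curve limit set to get a map $\Psi:\QF(S)\to\Teich(S)\times\Teich(\overline S)$, and build the inverse by gluing $\Gamma_0$-invariant Beltrami coefficients on $\HH$ and $\HH^-$, solving the Beltrami equation, and conjugating the base Fuchsian group. This is the correct route and the right hard analytic input (Ahlfors--Bers measurable Riemann mapping theorem with holomorphic parameter dependence) is named.

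Two small caveats. First, with the definition of quasi-Fuchsian used in this paper (discrete group whose limit set is a Jordan curve), the statement you invoke in passing --- that $\Gamma$ is quasiconformally conjugate to a Fuchsian group, so that the two quotients are marked surfaces of the correct topological type and $\Bers\circ\Psi=\id$ holds --- is itself a nontrivial theorem (Bers/Maskit), not a formality; you should flag it as input rather than fold it into ``one checks.'' Second, the phrase ``Mostow-type rigidity of the normalised solution to the Beltrami equation'' is a misattribution: what is used for injectivity is the uniqueness of the normalised quasiconformal solution together with the fact that a conformal self-homeomorphism of $\hat\C$ is Möbius; Mostow rigidity concerns lattices in rank-one Lie groups and is not the relevant tool here. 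Neither issue affects the overall correctness of the approach.
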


Using this theorem, one can apply an iterative process to find a fixed point in the character
variety of $S$.

\begin{thm}[\cite{mcmullenRenormalization3ManifoldsWhich1996}] \label{ThmHyperbolisation}
  Let $S$ be an oriented compact surface of negative Euler characteristic.
  Let $(X,Y) \in \Teich (S) \times {\Teich (\overline S)}$, let $\Phi \in \Mod(S)$ be pseudo-Anosov,
  then the sequence
  \begin{equation}
    \Bers(\Phi^n(X), \Phi^{-n}(Y))
    \label{<+label+>}
  \end{equation}
  has an accumulation point $\rho_\infty: \pi_1 (S) \rightarrow \PSL_2 (\C)$. Furthermore,
  \begin{enumerate}
    \item $\rho_\infty$ is discrete and faithful.
    \item The limit set of $\rho_\infty (\pi_1 (S))$ is the whole boundary $\hat \C$ of $\HH^3$.
    \item $\rho_\infty$ is a fixed point of $\Phi$ and $\Phi$ is conjugated to an isometry $\alpha$
      of $\tilde M_\Phi = \HH^3 / \rho_\infty (\pi_1 (S))$.
    \item The group of isometries of $M_\infty$ is discrete and $\alpha$ is of infinite order.
    \item The mapping torus
      \begin{equation}
        M_\Phi := S \times [0,1] / (x,0) \sim (\phi(x), 1)
      \end{equation}
      is isomorphic as an hyperbolic manifold
      to $\tilde M_\Phi / < \alpha >$.
    \item The subgroup generated by $\alpha$ of the group of isometries of $\tilde M_\Phi$ is of
      finite index.
    \item The fixed point $\rho_\infty$ of $\psi$ is hyperbolic, meaning that no eigenvalues of the differential of
      $\psi$
  \end{enumerate}
\end{thm}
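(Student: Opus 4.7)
The plan is to follow Thurston's original iterative proof of hyperbolization of $3$-manifolds fibering over the circle, as reorganized in the cited work of McMullen. Writing $\rho_n := \Bers(\Phi^n X, \Phi^{-n} Y) \in \QF(S)$, each $\rho_n$ is a discrete, faithful representation of $\pi_1(S)$ into $\PSL_2(\C)$ whose limit set is a Jordan curve. The key equivariance property of the Bers map under $\Mod(S)$ implies that, up to conjugation in $\PSL_2(\C)$, the sequence $(\rho_n)$ is related to its shift by an application of $\Phi_*$ on $\pi_1(S)$; in particular, any accumulation point $\rho_\infty$ (modulo conjugation) will automatically be a fixed point of the induced action of $\Phi$ on the character variety, furnishing the element $\alpha \in \PSL_2(\C)$ in item (3).

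The central analytic step is Thurston's \emph{double limit theorem}. Because $\Phi$ is pseudo-Anosov, the sequences $\Phi^n X$ and $\Phi^{-n} Y$ diverge in $\Teich(S)$ and $\Teich(\overline S)$, converging in Thurston's projective compactification to the stable and unstable measured laminations $\lambda^\pm$ of $\Phi$, which jointly fill the surface. Under this filling hypothesis, the double limit theorem asserts that $(\rho_n)$ is relatively compact in the character variety modulo conjugation. Extracting a convergent subsequence $\rho_{n_k} \to \rho_\infty$, item (1) then follows from Jorgensen's inequality (or Chuckrow's theorem), which guarantees that an algebraic limit of discrete and faithful representations into $\PSL_2(\C)$ remains discrete and faithful.

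Combining $\rho_\infty$ with the conjugating element $\alpha$ produced above, one obtains a representation $\tilde\rho : \pi_1(M_\Phi) \to \PSL_2(\C)$ of the mapping torus, whose fundamental group is the semidirect product $\pi_1(S) \rtimes_{\Phi_*} \Z$. The heart of Thurston's proof consists in upgrading this to show that $\tilde\rho$ is discrete and faithful with cocompact image, so that $M_\Phi$ acquires a complete hyperbolic structure of finite volume and the identification $\tilde M_\Phi / \langle \alpha \rangle \simeq M_\Phi$ of item (5) holds. Once this is established, item (2) follows because the limit set of the cocompact group $\tilde\rho(\pi_1(M_\Phi))$ is all of $\CS^2$, and \emph{a fortiori} so is the limit set of its normal subgroup $\rho_\infty(\pi_1(S))$. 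Items (4) and (6) then follow from Mostow--Prasad rigidity applied to $M_\Phi$: the isometry group of $M_\Phi$ is finite, so the isometry group of the cover $\tilde M_\Phi$ is discrete and contains $\langle \alpha \rangle$ as a finite-index subgroup, with $\alpha$ of infinite order because $M_\Phi$ is a genuine $3$-manifold.

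The main obstacle is the double limit theorem together with the promotion of $\tilde\rho$ to a cocompact representation: both rest on delicate compactness arguments in Teichmüller theory and in the theory of geodesic laminations, controlling how hyperbolic structures on $S$ can degenerate while the associated Kleinian groups $\rho_n(\pi_1(S))$ remain discrete and no accidental parabolics appear in the limit. Neither step would be attempted afresh here; both are invoked through the cited reference of McMullen.
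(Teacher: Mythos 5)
The paper does not prove this theorem: it is cited verbatim from McMullen's book (reference \cite{mcmullenRenormalization3ManifoldsWhich1996}), and the statement is used as a black box in \S 8. Your outline is a faithful compression of the actual proof in that reference, namely Thurston's iterative scheme: the double limit theorem (using that the pseudo-Anosov invariant laminations $\lambda^{\pm}$ fill $S$) yields relative compactness of $\Bers(\Phi^n X, \Phi^{-n} Y)$; J{\o}rgensen--Chuckrow gives discreteness and faithfulness of any algebraic limit; the extension to a discrete faithful cocompact representation of $\pi_1(M_\Phi) \simeq \pi_1(S) \rtimes_{\Phi_*} \Z$ gives the hyperbolic structure on the mapping torus; and items (2), (4), (5), (6) follow from the fact that a nontrivial normal subgroup of a cocompact Kleinian group has full limit set, that the normalizer of a Kleinian group with limit set $\CS^2$ is discrete, and that a discrete group containing a cocompact lattice contains it with finite index.

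One place where your sketch is too quick is the claim that ``any accumulation point $\rho_\infty$ will automatically be a fixed point of the induced action of $\Phi$.'' The Bers equivariance only shows that $\Phi$ permutes the set of accumulation points; it does not by itself fix any one of them. What makes the limit $\Phi$-invariant is the \emph{uniqueness} of the limit: the whole sequence actually converges, either via Thurston's rigidity for doubly degenerate representations with ending laminations $(\lambda^+,\lambda^-)$, or via McMullen's contraction of the skinning map, which is precisely the mechanism his book develops. Since you explicitly defer the double limit theorem and the promotion to a cocompact representation to the cited reference, this is a defect of phrasing rather than of substance, but the wording ``automatically'' should not be left to stand.
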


\subsection{The surface $\cM_0$ and a Theorem of Minsky}
We are interested in this section with the Markov surface $\cM_0$ that is when $\kappa = -2$,
therefore $\rho (aba^{-1}b^{-1})$ is a parabolic Möbius transformation. We recall the results from
\cite[\S 4.1]{cantatBersHenonPainleve2009}. The real points $\cM_0 (\R)$ consist of an
isolated point $(0, 0, 0)$ and four diffeomorphic connected components that are given by the signs
of $x$ and $y$. We will denote by $\cM_0 (\R)^+$ the connected component such that $x, y > 0 $.
of area $2 \pi$. This is equivalent to asking that there is a cusp at
the puncture. It is known that $\Teich (\T_1)$ ($\T_1$ being the punctured torus) is isomorphic to the
upper half-plane $\HH^+$ and we make this identification from now on. The action of $\Mod (\T_1)$ on
$\Teich(\T_1)$ is conjugated to the usual action of $\PSL_2 (\Z)$ by isometries on $\HH^+$. The point $(0,0,0)$ is the only
singular point of $\cM_0$ and it corresponds to the conjugacy class of the representation
\begin{equation}
  \rho (a) = \begin{pmatrix}
    0 & i \\
    i & 0
  \end{pmatrix}, \quad \rho (b) = \begin{pmatrix}
    0 & -1 \\
    1 & 0
  \end{pmatrix}.
  \label{<+label+>}
\end{equation}
Its image is the quaternionic group of order eight. The automorphism group $\Aut (\cM_0)$ fixes $(0,0,0)$, preserves
$\cM_0 (\R)^+$ and permutes the three remaining connected components of $\cM_0 (\R) \setminus (0,0,0)$.

Any point in $\Teich (\T_1)$ gives rise to a representation $\overline \rho: \pi_1 (\T_1) \rightarrow
\PSL_2 (\R)$ which can be lifted to four distinct representations $\rho: \pi_1 (\T_1) \rightarrow
\SL_2 (\R)$. The cusp condition gives the condition $\Tr (\rho (a,b)) = -2$ (because $\Tr = 2$
corresponds to reducible representations). Therefore, we get an
embedding of $\Teich (\T_1)$ into the 4 different connected component of $\cM_0 (\R) \setminus
(0,0,0)$. We will restrict our attention to the embedding $\Teich (\T_1) \hookrightarrow \cM_0
(\R)^+$.

The set $\cM_0 (\R)^+$ is made of (conjugacy class of) Fuchsian representations. Let $\DF_0 \subset
\cM_0 (\C)$ be the subset of discrete and faithful representation of $\pi_1 (\T_1)$. Then $\DF_0$ has
four different connected components, one of them contains $\cM_0(\R)^+$. We denote it by $\DF_0^+$ and
we denote by $\QF_0^+$ the set of Quasi-Fuchsian representation inside $\DF_0^+$. In fact, $\QF_0^+$
is the interior of $\DF_0^+$ (see \cite{minskyEndInvariantsClassification2002}).
We can identify $\Teich(\T_1)$ with the upper half plane $\HH^+$ and ${\Teich (\overline \T_1)}$ with
the lower half plane $\HH^-$. The group $\PSL_2 (\Z)$ acts on $\P^1 (\C)$ via Möbius transformation.
It preserves $\HH^+, \HH^-$ and $\P^1 (\R)$. In particular, the mapping class group $\Mod (\T_1) =
\SL_2 (\Z)$ acts on $\P^1 (\C)$ and we can conjugate this action to the action on $\cM_0 (\R)^+$ via
the Bers mapping. Namely, let $\Phi \in \Mod (\T_1)$ and let $f_\Phi \in \Aut (\cM_0)$ induced by
the map from Equation \eqref{EqGroupHomoGL2AutMarkov}. We have for every $(s,t) \in \HH^+ \times
\HH^-$,
\begin{equation}
  \Bers (\Phi (s,t)) = f_\Phi (\Bers (s,t))
  \label{}
\end{equation}

Theorem \ref{ThmHyperbolisation} is not applicable directly as $\T_1$ is not compact. However,
Minsky showed that the Bers mapping can be extended to almost all the boundary of
$\Teich(\T_1) \times \Teich (\overline \T_1)$. The boundary of $\HH^+$ is $\P^1 (\R)$. We denote by
$\Delta$ the diagonal in $\partial \Teich(\T_1) \times \partial \Teich(\overline \T_1)$.

\begin{thm}[\cite{minskyClassificationPuncturedTorusGroups1999}]
  The Bers mapping extends to a continuous bijection
  \begin{equation}
    Bers: \overline{\Teich (\T_1) \times \Teich (\overline \T_1)} \setminus \Delta \rightarrow \DF^+
    \label{<+label+>}
  \end{equation}
\end{thm}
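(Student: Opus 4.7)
The plan is to follow Minsky's original proof of the ending lamination conjecture, specialized to once-punctured torus groups.

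First, I would identify the relevant boundary. Since $\Teich(\T_1)\simeq \HH^+$, Thurston's compactification attaches $\partial\Teich(\T_1) = \P^1(\R)$, naturally identified with the space of projective measured laminations on $\T_1$: rational slopes correspond to simple closed curves on the torus and irrational slopes to irrational measured foliations. A pair in $\overline{\Teich(\T_1)\times\Teich(\overline\T_1)}\setminus \Delta$ is thus a pair of laminations that are not both equal to the same projective class.

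Second, given such $(\xi,\eta)$ off the diagonal, pick approximating sequences $X_n\to \xi$ and $Y_n\to \eta$ in the interior and consider $\rho_n=\Bers(X_n,Y_n) \in \QF^+$. Combining J\o rgensen's theorem (algebraic limits of discrete faithful representations remain discrete and faithful) with a priori length bounds on the simple closed curves appearing in the continued fraction expansions of $\xi$ and $\eta$, I would extract an algebraically convergent subsequence with limit $\rho_\infty \in \DF^+$. By the Bonahon--Thurston theory of geometrically tame ends, the quotient $\HH^3/\rho_\infty(\pi_1(\T_1))$ has two geometric ends whose end invariants I would identify as $\xi$ and $\eta$.

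The main content is then the rigidity statement: two representations in $\DF^+$ with the same pair of end invariants are conjugate. I would prove this by constructing a model manifold directly from the combinatorics of the Farey graph (the curve complex of $\T_1$) together with the pair $(\xi,\eta)$, and showing via pleated surfaces and Margulis-tube analysis that this combinatorial model is bi-Lipschitz equivalent to $\HH^3/\rho_\infty(\pi_1(\T_1))$. Rigidity yields both injectivity of the extended $\Bers$ and the fact that the limit $\rho_\infty$ depends only on $(\xi,\eta)$. Surjectivity then follows because every $\rho\in \DF^+$ carries a well-defined pair of end invariants, and continuity of $\Bers$ and its inverse follows from the bi-Lipschitz control furnished by the model.

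The main obstacle is the construction of the bi-Lipschitz model manifold, which is Minsky's fundamental technical theorem. It requires a delicate analysis of how Margulis tubes and pleated surfaces are stacked along the quotient manifold, finely controlled by the continued-fraction expansions of $\xi$ and $\eta$. Excluding the diagonal $\Delta$ is essential: a boundary pair $(\xi,\xi)$ would force both geometric ends to degenerate along the same lamination, producing a collapse incompatible with discreteness and faithfulness, so no representation in $\DF^+$ lies over $\Delta$.
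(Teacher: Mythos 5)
This statement is imported into the paper as a citation: the paper gives no proof, it simply quotes Minsky's classification theorem for once-punctured torus groups \cite{minskyClassificationPuncturedTorusGroups1999} as a black box, alongside Bers' simultaneous uniformization and McMullen's hyperbolization theorem, to produce the saddle periodic point $q(f)$. There is therefore no ``paper's own proof'' to compare your sketch against; what matters is whether your outline is a faithful account of Minsky's argument, since the paper is relying on it wholesale.

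As a high-level summary of Minsky's proof, your outline is accurate: identifying $\partial\Teich(\T_1)\simeq\P^1(\R)$ with projective measured laminations; producing the limiting representation via J\o rgensen/Chuckrow and a priori continued-fraction length bounds; identifying the end invariants via Bonahon--Thurston tameness; and then proving rigidity (the ending lamination theorem in this case) by the bi-Lipschitz model manifold built from the Farey graph combinatorics, with pleated surfaces and Margulis-tube control. Two small cautions. First, your phrase ``continuity of $\Bers$ and its inverse follows from the bi-Lipschitz control'' glosses over the fact that the extended $\Bers$ is \emph{not} a homeomorphism: the statement is that it is a continuous bijection, and indeed Minsky proves that the inverse is \emph{discontinuous} at rational boundary points (this is the Anderson--Canary--McCullough non-continuity phenomenon, visible already in the punctured-torus case). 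Your argument, taken at face value, would prove too much. Second, your explanation of why $\Delta$ is excluded should be split into cases: for irrational $\xi$, a pair $(\xi,\xi)$ is forbidden by the covering theorem (a doubly degenerate group cannot have both ending laminations equal); for rational $\xi$, the excluded pair $(\xi,\xi)$ would require the same simple closed curve to be an accidental parabolic as the end invariant of \emph{both} ends, which no discrete faithful once-punctured torus group realizes. The single phrase ``collapse incompatible with discreteness'' does not distinguish these, and the rational case is not really a degeneration argument.
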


In particular, let $\Phi \in \SL_2 (\Z) = \Mod (\T_1)$ be a loxodromic element and let $f_\Phi$ be
its associated automorphism over $\cM_0$. The isometry $\Phi$ has a repulsive fixed point
$\alpha(\Phi)$ on $\P^1 (\R)$ and an attractive one $\omega(\Phi)$. By Minsky's theorem, this gives two
unique fixed point $p(\Phi) = \Bers ( (\alpha(\Phi), \omega(\Phi)))$ and $q (\Phi) = \Bers ( (\omega(\Phi),
\alpha(\Phi)))$ of $f_\Phi$ in $\DF^+ \setminus \QF^+$. This two fixed points are also hyperbolic because any point in
$D_F^+$ converges exponentially fast to $q(\phi)$ in forward time and to $p(\phi)$ in backward time so we can apply
Corollary 3.19 of \cite{mcmullenRenormalization3ManifoldsWhich1996}.

\subsection{The surface $\cM_D$ for $D = 2 - 2 \cos (\frac{\pi}{q})$}\label{subsec:orbifold}
The main reference for this part is \cite[\S3.7]{mcmullenRenormalization3ManifoldsWhich1996}.
Let $q \geq 2$ be an integer. Consider the orbifold real compact surface $S_q$ which is a genus 1 torus with a singular
point of index $q$. The modular class group of $S$ is still $\SL_2 (\Z)$. Let $\psi \in \SL_2 (\Z)$ be pseudo-Anosov,
i.e such that $\left|\Tr \psi \right| > 2$. We can recover Theorem \ref{ThmHyperbolisation} for $S_q$ as follows. There
exists a real closed compact surface $\tilde S_q$ with a finite characteristic cover $\tilde S_q \rightarrow S_q$. In
particular, $\Mod(S_q) = \Mod (\tilde S_q)$ so we can apply Theorem \ref{ThmHyperbolisation} for $\tilde \psi$, the lift
of $\psi$ to $\tilde S_q$. We get a hyperbolic 3-fold $M_{\tilde \psi}$ which is topologically the mapping torus of
$\tilde \psi$. We have a finite cover $M_{\tilde \psi} \rightarrow M_{\psi}$ and by Mostow's rigidity theorem this
induces a hyperbolic metric on $M_\psi$. 

Now the orbifold fundamental group of $S_q$ is 
\begin{equation}
  \pi_1 (S_q) = \left< a,b | [a,b]^q = 1 \right>.
  \label{<+label+>}
\end{equation}
Every irreducible representation $\rho : \pi_1 (S) \rightarrow \PSL_2 (\C)$ lifts to a representation 
\begin{equation}
  \overline \rho : G \rightarrow \SL_2 (\C)
  \label{<+label+>}
\end{equation}
where $G = \left< a,b | [a,b]^{2q} = 1 \right>$ such that 
\begin{equation}
  \overline \rho ([a,b]^q) = - \id.
  \label{<+label+>}
\end{equation}
In particular, $\rho ([a,b])$ is an elliptic element of order $2q$ thus 
\begin{equation}
  \Tr (\rho ([a,b])) = -2 \cos \left( \frac{2 \pi}{2q} \right) = - 2 \cos \left(\frac{\pi}{q}\right).
  \label{<+label+>}
\end{equation}
We get that $\overline \rho \in \cM_{2 -2 \cos \left(\frac{\pi}{q}\right)}$. In particular the representation induced by
$M_\psi$ yields a hyperbolic fixed point of $\psi$ over $\cM_D$.

\begin{rmq}\label{rmq:typo-mcmullen}
  In \cite[\S3.7]{mcmullenRenormalization3ManifoldsWhich1996}, it is stated that every irreducible representation
  $\overline \rho$ is a point of $M_D$ for $D = 2 - 2 \cos(2 \pi /q)$ but we believe this is a typo as for $q=2$ we would
  get $D=4$ and there are no irreducible representations in $\cM_4$. The fixed point $M_\psi$ constructed in
  \cite[\S3.7]{mcmullenRenormalization3ManifoldsWhich1996} in the example for $q=2$ is actually a point in $\cM_0$ and
  not $\cM_4$. See also \cite[Theorem 1.1]{cantatBersHenonPainleve2009}.
\end{rmq}

\section{Dynamics of loxodromic automorphisms of the Markov surface}
\subsection{Cyclic compactifications and circle at infinity}
Let $\overline \cM_D \subset \P^3$ be the closure of $\cM_D$ in $\P^3$. We have that $\overline \cM_D \setminus \cM_D$ is a triangle
of lines. We use the following result.

\begin{prop}[\cite{el-hutiCubicSurfacesMarkov1974,
  cantatCommensuratingActionsBirational2019}]\label{PropCycleCaseIndeterminacyPoints}
  Let $X$ be projective surface and $U$ an open subset of $X$ such that $X \setminus U$ is a cycle
  of rational curves. Assume that $X \setminus U$ is not an irreducible curve with one nodal
  singularity. Let $g$ be an automorphism of $U$, then the indeterminacy points of $g$ can only be
  intersection points of two components of the cycle.
\end{prop}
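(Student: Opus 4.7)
The plan is to argue by contradiction: suppose $p \in X \setminus U$ is an indeterminacy point of $g$ lying at a smooth point of the cycle $B := X \setminus U$, so that $p$ lies on exactly one irreducible component $E_1$ of $B$. The goal is to derive a contradiction from the combinatorial rigidity of the cycle structure combined with the birationality of $g$ as a map $X \dashrightarrow X$.

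I would start by resolving $g$ through a minimal sequence of blow-ups $\pi : Y \to X$ with $\sigma := g \circ \pi : Y \to X$ regular, so that $\pi^{-1}(B) = \sigma^{-1}(B)$ since $g$ is an automorphism of $U$. The exceptional fiber $T := \pi^{-1}(p)$ is a connected tree of rational curves in $Y$, and by Zariski's main theorem its image $A := \sigma(T) \subset B$ is connected. Because $p \in \Ind(g)$, the set $A$ contains an irreducible curve; hence $A$ is a connected non-empty sub-arc $A = E_{i_0} \cup \cdots \cup E_{i_1}$ of the cycle. Let $c_- := E_{i_0-1} \cap E_{i_0}$ and $c_+ := E_{i_1} \cap E_{i_1+1}$ be the two corners at the endpoints of $A$. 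If $c_-$ is not an indeterminacy point of $g^{-1}$, then $g^{-1}$ is defined at $c_-$ and takes the value $p$ there (since $c_- \in E_{i_0} \subset A$ is contracted to $p$); since also $c_- \in E_{i_0-1}$, and $E_{i_0-1}$ is not in $A$ hence not contracted to $p$, the image $g^{-1}(E_{i_0-1})$ must be an irreducible component of $B$ containing $p$, which can only be $E_1$. The symmetric implication holds at $c_+$.

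The birational inverse $g^{-1}$ induces a bijection between the non-contracted components of $B$ on each side, so two distinct non-contracted components cannot share an image. If both $c_{\pm}$ are regular points of $g^{-1}$ and $E_{i_0-1} \ne E_{i_1+1}$, then two distinct components both map to $E_1$, a contradiction. The equality $E_{i_0-1} = E_{i_1+1}$ can occur only when $A$ consists of $n-1$ components of the cycle, in which case $g^{-1}(B) = \{p\} \cup E_1 = E_1$, contradicting $g^{-1}(B) = B$ whenever $n \ge 2$; similarly $A$ cannot be the full cycle. Therefore at least one of $c_{\pm}$, say $c_-$, must lie in $\Ind(g^{-1})$. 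Applying the same analysis to $g^{-1}$ at the corner $c_-$ produces a connected sub-arc $A'$ of the cycle contracted by $g$ to $c_-$; iterating this interplay between arcs contracted by $g$ and by $g^{-1}$ around the cycle, together with the birationality constraint, eventually forces the whole boundary to collapse into a configuration only compatible with the excluded case of an irreducible nodal rational curve, contradicting our hypothesis on $B$. The main obstacle will be this final iterative step: tracking the combinatorial interaction of contracted arcs and indeterminacy corners for both $g$ and $g^{-1}$ simultaneously requires a careful case analysis, in particular for small cycles (such as $n = 2$) and for near-maximal contracted arcs.
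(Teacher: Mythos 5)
Your setup is the right one: argue by contradiction from a smooth indeterminacy point $p$, take a resolution $\pi,\sigma\colon Y\to X$ with $\sigma = g\circ\pi$ and $B_Y := \pi^{-1}(B)=\sigma^{-1}(B)$ for $B=X\setminus U$, and observe that $A=\sigma(\pi^{-1}(p))$ is a connected sub-arc of the cycle. But the proof as written has two real gaps. First, the deduction ``$g^{-1}(E_{i_0-1})$ must be an irreducible component of $B$ containing $p$, which can only be $E_1$'' tacitly assumes $E_{i_0-1}$ is not contracted by $g^{-1}$; a birational self-map can perfectly well contract boundary components, and if $E_{i_0-1}$ is contracted your ``two distinct components map to $E_1$'' contradiction evaporates. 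Second and decisively, the closing step --- iterating the interplay of arcs contracted by $g$ and by $g^{-1}$ around the cycle until ``the whole boundary collapses'' --- is a description of a hoped-for outcome, not an argument. You flag this yourself as the main obstacle, but as written it carries the entire weight of the proof, and it is not at all clear that the alternation terminates in the way you want, especially in the small-cycle and near-maximal-arc cases you mention.

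A shorter route avoids the corner-chasing entirely. Since $g$ and $g^{-1}$ are both regular on $U$, $\pi$ and $\sigma$ are each compositions of blow-ups with centers on $B$ and its reduced total transforms; hence $B_Y$ is nodal and its dual graph $\Gamma_Y$ is built from the cycle $\Gamma_X$ by subdividing edges (blow-ups at nodes of $B$) and attaching pendant trees (blow-ups at smooth points). Thus $\Gamma_Y$ has first Betti number $1$, with a unique cycle $\cC$, and a vertex lies on $\cC$ if and only if it is a strict transform of a component of $B$ or the exceptional curve of a node blow-up; this holds simultaneously from the $\pi$-side and the $\sigma$-side because $\cC$ is intrinsic to $\Gamma_Y$. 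In particular every component of $B_Y$ off $\cC$ is $\sigma$-exceptional, hence $\sigma$-contracted. If $p$ is a smooth point of $B$, the whole connected tree $\pi^{-1}(p)$ is a pendant tree of $\Gamma_Y$, hence off $\cC$, hence contracted by $\sigma$ to a single point --- which by the usual characterization of indeterminacy on surfaces means $g$ is regular at $p$, a contradiction. The hypothesis excluding the irreducible nodal curve is exactly what makes $\Gamma_X$ a genuine cycle on at least two vertices so that this on-cycle/off-cycle dichotomy is available.
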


This shows that to understand the dynamics of a loxodromic automorphism at infinity it suffices to blow up the
intersection points of the prime divisors at infinity. Therefore we can remain with compactifications $X$ of $\cM_D$ such that $X
\setminus \cM_D$ is a cycle of rational curves. We call them cyclic compactifications.

Start with $\overline \cM_D \subset
\P^3$. If we blow up the three intersection points of the triangle at infinity we get a new compactification of $\cM_D$ with
a hexagon of rational curves at infinity. If we repeat the process we get a sequence of compactifications with an increasing
polygon of rational curves at infinity, let $X_n$ be this sequence of compactifications. Let $\cG_n$ be the dual graph of $X
\setminus \cM_D$, i.e the vertices of $\cG_n$ are the irreducible components $E_i$ of $X_n \setminus \cM_D$ and we have an
edge between $E_i$ and $E_j$ if and only if $E_i \cap E_j \neq \emptyset$. We have a natural embedding of graphs $\cG_n
\hookrightarrow \cG_{n+1}$. We define the direct limit $\cG = \varinjlim_{n} \cG_n$ and $\Aut(\cM_D)$ acts naturally
on $\cG$. There is a
parametrization of the vertices of $\cG$ called the \emph{Farey parametrization} (see
\cite{cantatCommensuratingActionsBirational2019} \S 8.2), and $\cG$ is isomorphic to the set of rational points of the
circle $\CS^1$ with this parametrization. If $X$ is a cyclic compactification, the irreducible components $E_i$ (enumerated
cyclically) of the boundary corresponds to rational points $v_i \in \CS^1$. The rational points of $]v_i, v_{i+1}[$ are
  obtained by blowing up above the point $E_i \cap E_{i+1}$.

  Following $\S 2.6$ of \cite{cantatBersHenonPainleve2009}, we identify $\CS^1 \simeq \R \cup \left\{ \infty \right\}$ with the
  boundary of the upper half plane of
$\HH^+$. If $v_x, v_y, v_z$ are the
three vertices of $\cG$ representing the three curves $\left\{ X=T=0 \right\}$, $\left\{ Y=T=0 \right\}$ and $\left\{ Z = T= 0
\right\}$ in $\overline \cM_D$, then this identification sends $v_x, v_y, v_z$ to $0, -1, \infty \in \partial \HH^+$ which
we write $j_x, j_y, j_z$ respectively. Recall the notations of \S \ref{SubSecIntroAutGroupOfMarkovSurface}, The
generator $\sigma_x$ (resp. $\sigma_y, \sigma_z$) of $\Gamma^*$ acts on
$\partial \HH^+$ as the reflection with respect to the geodesic $(j_y, j_z)$ (resp. $(j_x, j_z), (j_x, j_y)$) under this
identification. The isometries of $\HH^+$ induced by $\sigma_x, \sigma_y, \sigma_z$ with this identification are exactly
given by \eqref{EqMatrixOfGenerators}. Hence, the action of $\Gamma^*$ on $\cG$ is given by the action of $\Gamma^*$ on
$\HH^+$ via isometries. Thus every loxodromic automorphism $f \in \Gamma^*$ admits two
irrational fixed points $\alpha(f), \omega(f) \in \partial \HH^+$, $\alpha(f)$ is repulsive and $\omega(f)$ is
attracting.

The circle $\CS^1$ has an interpretation as a special subset of the set of valuations of the ring of regular functions
of $\cM_D$. The two fixed point $\alpha(f), \omega (f)$ correspond to a repulsive and attracting fixed point in this
space of valuation. See \S 14.5 of \cite{abboudDynamicsEndomorphismsAffine2023} for more details.

\subsection{Dynamics of loxodromic automorphism at infinity} From the previous discussion, we get the following proposition.
\begin{prop}\label{PropDynamicsLoxodromicAutomorphism}
  Let $D \in \C, \K = \Q(D)$ and $f \in \Aut (\cM_D)$. For any cyclic compactification $Y$ of $\cM_D$, there
  exists a cyclic compactification $X$ above $Y$ such that
  there exists two closed points $p_+, p_- \in (X \setminus \cM_D) (\K)$ satisfying the following properties
  \begin{enumerate}
      \item $p_+ \neq p_-$
    \item some positive iterate of $f^{\pm 1}$ contracts $X \setminus \cM_D$ to $p_\pm$.
    \item $f^\pm$ is defined at $p_{\pm}, f^{\pm 1} (p_\pm) = p_\pm$ and $p_\mp$ is the unique
      indeterminacy point of $f^{\pm N}$ for $N$ large enough.
  \item There exists local algebraic coordinates $(u,v)$ at $p_\pm$ such that $uv = 0$ is a local equation of the
    boundary and $f^{\pm 1}$ is locally of the form
    \begin{equation}
      f^{\pm 1} (u,v) = \left( u^a v^b \phi, u^c v^d \psi \right)
      \label{<+label+>}
    \end{equation}
    with $ad - bc = \pm 1$ and $\phi, \psi$ invertible.
    In particular, for any place $v$ of $\K$ there exists
    an open neighbourhood $U^\pm_{v}$ of $p_\pm$ in $X^{\an}_{v}$ such that $f^\pm (U^\pm_{\K_v}) \Subset
    U^\pm_{v}$.
  \item $f$ is algebraically stable over $X$ and
    \begin{equation}
      f_X^* \theta_{f,X}^+ = \lambda_1 \theta_{f,X}^+, \quad (f_X^{-1})^* \theta_{f,X}^- = \lambda_1.
      \theta_{f,X}^-
    \end{equation}
\end{enumerate}
\end{prop}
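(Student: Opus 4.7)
The plan is to exploit the identification of the dual graph $G$ of boundary components of cyclic completions with the set of rational points of $\CS^1 \simeq \partial\HH^+$ equipped with the Farey structure. Under this identification, the action of $f$ on $G$ extends to an isometry of $\HH^+$ with two irrational fixed points $\alpha(f)$ (repulsive) and $\omega(f)$ (attractive). The idea is to construct $X$ above $Y$ by blowing up corners until the cyclic graph $V_X$ of boundary components is fine enough near $\omega(f)$ and $\alpha(f)$ for the dynamics of $f$ to become tame on $X$; then $p_+$ and $p_-$ are the corner points formed by the two boundary components sandwiching $\omega(f)$ and $\alpha(f)$ respectively.

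First, by the north--south dynamics of $f$ on $\CS^1$, every rational point different from $\alpha(f)$ is attracted to $\omega(f)$ under iteration of $f$ (and symmetrically for $f^{-1}$). Starting from $Y$, I iteratively blow up corners to insert Farey mediants accumulating toward $\omega(f)$ and $\alpha(f)$. By choosing the refinement fine enough, I arrange that in the resulting cycle $V_X$, with $v_\omega^\pm$ the two vertices of $V_X$ immediately flanking $\omega(f)$ and $v_\alpha^\pm$ those flanking $\alpha(f)$, the map $f$ sends every vertex $v\in V_X \setminus\{v_\alpha^-,v_\alpha^+\}$ into the open arc $(v_\omega^-,v_\omega^+)$, and symmetrically $f^{-1}$ sends every $v\in V_X\setminus\{v_\omega^-,v_\omega^+\}$ into $(v_\alpha^-,v_\alpha^+)$. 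Then set $p_+ := E_{v_\omega^-}\cap E_{v_\omega^+}$ and $p_-:=E_{v_\alpha^-}\cap E_{v_\alpha^+}$; these are distinct $\K$-rational points since $\alpha(f)\neq\omega(f)$ and all vertices of $V_X$ are $\K$-rational, proving (1).

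For (3), the contracting dynamics on $V_X$ force every boundary component other than $E_{v_\alpha^\pm}$ to be mapped by $f$ into a neighborhood of $p_+$ whose trace on the boundary lies in $E_{v_\omega^-}\cup E_{v_\omega^+}$; since the image is a curve passing through $p_+$ and through no other boundary corner of this neighborhood, it is contracted onto $p_+$. One further iterate absorbs $E_{v_\alpha^\pm}$ themselves, yielding (3) and making $p_+$ the unique attracting fixed point at infinity, hence (2). By Proposition \ref{PropCycleCaseIndeterminacyPoints}, the indeterminacy locus of $f^{\pm N}$ on $X$ is a subset of corners of the boundary cycle, and the contracting behavior rules out all corners except $p_\mp$ for $N$ large, giving (4). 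The monomial form in (5) is then a consequence of the fact that $f^{\pm 1}$ is a local isomorphism of the ambient surface at $p_\pm$ that permutes the two local boundary components $\{u=0\}$ and $\{v=0\}$; the condition $ad-bc=\pm 1$ expresses that the induced action on the rank-two lattice of local divisors is invertible over $\Z$. The forward-invariant neighborhoods $U_v^\pm$ at every place follow at once from this monomial form, both at archimedean and non-archimedean places.

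Algebraic stability in (6) then follows because no boundary component of $X$ is mapped by $f$ into the unique indeterminacy point $p_-$ (every component is attracted toward $p_+$), hence $(f_X^*)^n=(f_X^n)^*$ on $\NS(X)$; the eigenvector $\theta^+$ from Theorem \ref{ThmDynamicAutomorphismPicardManin} is already defined in $X$ and satisfies $f_X^*\theta_X^+ = \lambda_1\theta_X^+$ there. The main obstacle in the argument is the refinement step: making precise that finitely many corner blow-ups suffice to make the dynamics on $V_X$ tame in a single iterate. This reduces to a quantitative estimate for how fast the isometry $f$ of $\HH^+$ contracts arcs on $\CS^1$ near $\omega(f)$, which becomes automatic once one has inserted enough Farey mediants on both the $\omega(f)$ and $\alpha(f)$ sides.
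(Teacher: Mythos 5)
Your overall strategy matches the paper's: identify boundary components of cyclic completions with rational points of $\CS^1\simeq\partial\HH^+$ via the Farey structure, let $f$ act on $\CS^1$ as a hyperbolic isometry with north--south dynamics between $\alpha(f)$ and $\omega(f)$, and take $p_\pm$ to be the corners $E_{v_\omega^-}\cap E_{v_\omega^+}$ and $E_{v_\alpha^-}\cap E_{v_\alpha^+}$ flanking $\omega(f)$ and $\alpha(f)$. This is exactly what the paper does.

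However, there is a genuine gap in your treatment of the contraction step. You claim that by inserting enough Farey mediants near \emph{both} $\alpha(f)$ and $\omega(f)$ you can force $f$ itself to send every vertex of $V_X\setminus\{v_\alpha^\pm\}$ into the arc $(v_\omega^-,v_\omega^+)$, and you say this ``becomes automatic once one has inserted enough Farey mediants on both sides.'' This is false, and refining near $\alpha(f)$ is in fact counterproductive. For a hyperbolic Möbius transformation $f$ of $\CS^1$ with fixed points $\alpha$ (repelling) and $\omega$ (attracting), the arcs $I_\alpha$ and $I_\omega$ satisfying $f(\CS^1\setminus I_\alpha)\subset I_\omega$ are \emph{anti-monotone}: shrinking $I_\omega$ forces $I_\alpha$ to grow (precisely, the minimal admissible $I_\alpha$ is $\CS^1\setminus f^{-1}(I_\omega)$). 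You cannot make both small simultaneously. Concretely, after your refinement there is a vertex $v$ of $V_X$ just beyond $v_\alpha^+$ that is very close to $\alpha(f)$; since $\alpha(f)$ is fixed by $f$, the image $f(v)$ is still close to $\alpha(f)$ and nowhere near $(v_\omega^-,v_\omega^+)$. So the single-iterate contraction you base items (2)--(4) on does not hold.

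The fix is the one the paper uses, and it is simpler than what you attempt: the statement only asks that \emph{some positive iterate} $f^{\pm N}$ contract the boundary to $p_\pm$. Since $V_X$ is finite and no vertex equals the irrational point $\alpha(f)$, north--south dynamics gives a single $N$ such that $f^N(v)\in(v_\omega^-,v_\omega^+)$ for every $v\in V_X$ (and symmetrically for $f^{-N}$). The only refinement needed is the minimal one ensuring $p_+\neq p_-$, i.e.\ that $\alpha(f)$ and $\omega(f)$ lie in distinct boundary arcs, which follows from $\alpha(f)\neq\omega(f)$ after finitely many corner blow-ups. No quantitative control of the contraction rate is required. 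Your discussion of items (5) and (6) is fine once the contraction step is corrected in this way.
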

\begin{proof}
  From the previous section, write $E_1, \cdots, E_r$ for the irreducible components of $Y$ enumerated in cyclic order.
  We write $p_+$ for the unique point $ p_+ = E_i \cap E_{i+1}$ such that
$\omega(f) \in ]v_i, v_{i+1}[$ and we define $p_-$ similarly with respect to $\alpha(f)$. It is clear that for $N$ large
  enough every $E_i$ is contracted to $p_\pm$ by $f^{\pm N}$ because of the attractingness of $\omega(f)$ and
  $\alpha(f) = \omega(f^{-1})$. Thus, to get a compactification above $Y$ that satisfy Properties (1)-(5) we just need that
  $p_+ \neq p_-$ in that compactification. Since $f$ is loxodromic, we have $\alpha (f) \neq \omega(f)$, thus after enough
  blow-ups this will be the case. The fact that $f$ is algebraically stable follows from the fact that the action of
  $f$ over $\cG \simeq \cS^1 \simeq \P^1 (\R)$ is induced by a loxodromic Mobius transformation.
\end{proof}

\begin{cor}[Corollary 3.4 from \cite{cantatBersHenonPainleve2009}]
  Any loxodromic automorphism of $\cM_D$ does not admit any invariant algebraic curves.
\end{cor}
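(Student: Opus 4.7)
The plan is to derive a contradiction by localising the dynamics at the attracting boundary fixed point supplied by Proposition~\ref{PropDynamicsLoxodromicAutomorphism}. Fix any cyclic completion $Y$ of $\cM_D$ and work on the completion $X$ above $Y$ from that proposition, with boundary fixed points $p_\pm$ and local coordinates $(u,v)$ at each, in which $f^{\pm 1}(u,v) = (u^a v^b \phi, u^c v^d \psi)$ with $a,b,c,d \in \Z_{\geq 0}$, $ad - bc = \pm 1$, and $\phi, \psi$ invertible.

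First I would reduce to an $f$-invariant analytic branch at $p_+$. Let $\overline C$ denote the closure of $C$ in $X$; since $\cM_D$ is affine, $S := \overline C \cap \partial X$ is finite and non-empty. If $p_- \notin \overline C$, then $f$ is regular on a neighbourhood of $\overline C$ and $f^N(S) \subseteq S$ for all $N$, so Proposition~\ref{PropDynamicsLoxodromicAutomorphism}(3), which contracts $\partial X \setminus \{p_-\}$ onto $p_+$, forces $p_+ \in S$. Swapping $f$ and $f^{-1}$ if needed, I may assume $p_+ \in \overline C$, and after replacing $f$ by a positive iterate (still loxodromic) I may assume there is a local analytic branch $\gamma$ of $\overline C$ at $p_+$ with $f(\gamma) \subseteq \gamma$. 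Crucially, $\gamma \not\subset \{uv=0\}$ because $C \subset \cM_D$.

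Next I would run a Puiseux calculation on $\gamma$. Pick a reduced parametrisation $t \mapsto (u(t), v(t))$ of $\gamma$ and set $p := \ord_0 u(t)$, $q := \ord_0 v(t) \in \Z_{\geq 1}$. The local form of $f$ makes $f \circ \gamma(t)$ have coordinates of $t$-orders $ap+bq$ and $cp+dq$, while $f(\gamma) \subseteq \gamma$ reparametrises as $f \circ \gamma(t) = \gamma(s(t))$ with $s \in t\,\C\{t\}$, forcing these orders to equal $p \cdot \ord_0 s$ and $q \cdot \ord_0 s$ respectively. Setting $r := q/p \in \Q_{>0}$, one obtains
\begin{equation*}
b r^2 + (a-d) r - c = 0,
\end{equation*}
whose discriminant is $(\Tr M)^2 - 4 \det M$ for $M = \begin{pmatrix} a & b \\ c & d \end{pmatrix} \in \GL_2(\Z)$. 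Since $p_+$ is attracting with global expansion rate $\lambda_1(f) > 1$, $M$ is hyperbolic with dominant eigenvalue $\lambda_1(f)$. A short arithmetic check then rules out rational roots: for $\det M = 1$, $(\Tr M)^2 - 4$ is a perfect square only if $|\Tr M| \leq 2$; for $\det M = -1$, $(\Tr M)^2 + 4$ is a perfect square only if $\Tr M = 0$, giving eigenvalues $\pm 1$. Both cases contradict loxodromicity, so $r \notin \Q$, a contradiction.

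The main obstacle I expect is Step~1: carefully handling the branch structure of $\overline C$ at $p_+$ (which may be singular) and securing an analytic branch fixed by a definite iterate of $f$, possibly via an additional blow-up above $X$ so that $f$ is regular along $\gamma$. Once a single $f$-invariant Puiseux branch is pinned down and identified with an eigendirection of $M$, the discriminant obstruction in Step~2 is immediate.
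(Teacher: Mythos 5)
Your proof is correct, and it shares the paper's starting point — after passing to the completion $X$ of Proposition~\ref{PropDynamicsLoxodromicAutomorphism}, the closure $\overline C$ must contain a boundary fixed point $p_\pm$ — but it derives the contradiction by a genuinely different mechanism. The paper's proof is a one-liner: $f$ restricted to $\overline{C}$ is an automorphism of a projective curve with a superattractive fixed point at $p_+$, which is impossible because an automorphism of a curve (after normalizing) has nonvanishing derivative. Your version replaces that conceptual observation with a hands-on Puiseux computation: you show that an $f$-invariant branch at $p_+$ forces the vector of vanishing orders $(p,q) \in \Z_{\geq 1}^2$ to be an eigenvector of $M = \begin{pmatrix} a & b \\ c & d \end{pmatrix}$, which cannot happen because the discriminant $(\Tr M)^2 - 4\det M$ of the eigendirection equation $br^2+(a-d)r-c = 0$ is never a positive perfect square when $M \in \GL_2(\Z)$ is loxodromic. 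What this buys you is a fully explicit, self-contained argument that also handles the case where $\overline C$ is singular at $p_+$ — something the paper's phrase ``superattractive fixed point'' glosses over, since one must pass to branches anyway to make sense of superattraction on a singular curve. In fact your observation that $m = \ord_0 s$ is an integer eigenvalue of $M$ immediately yields a contradiction without even needing $m\geq 2$, which is arguably cleaner than the superattraction framing. Two small points worth making explicit if you were to polish this: the local exponents satisfy $a,b,c,d\geq 0$ because $f$ is regular at $p_+$ (monomials with negative exponents would not extend over the boundary), and when $b=0$ the quadratic degenerates to a linear equation but then $M$ is triangular with eigenvalues $a,d\in\{\pm 1\}$, contradicting loxodromicity directly.
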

\begin{proof}
  Let $f \in \Aut(\cM_D)$ be loxodromic. Let $X$ be a compactification of $\cM_D$ given by Proposition
  \ref{PropDynamicsLoxodromicAutomorphism}.
  If $C \subset \cM_D$ was an invariant algebraic curve, then its closure $\overline C$ in $X$ should intersect
  $X \setminus \cM_D$. Since the boundary is contracted by $f$, we must have $p_+ \in \overline C$ and $f : \overline C
  \rightarrow C$ is an automorphism with a superattractive fixed point. This is a contradiction.
\end{proof}

\begin{prop}\label{PropOperateurPullBack}
  Let $X$ be a compactification of $\cM_D$ given by Proposition \ref{PropDynamicsLoxodromicAutomorphism}, replace $f$ by one of
  its iterates such that $f^{\pm 1}$ contracts $X \setminus \cM_D$ to $p_\pm$. Then,
  \begin{enumerate}
    \item There exists $D^- \in \DivInf(X)_\R$ such that $f_X^* D^- = \frac{1}{\lambda_1} D^-$.
    \item If $E_+, F_+$ are the two prime divisors at infinity such that $p_+ = E_+ \cap F_+$, then for all $R \in
      \DivInf(X)_\R$ such that $E_+, F_+ \not \in \Supp R$, then $f_X^* R = 0$ and $\theta_f^- \cdot R = 0$.
    \item $D^- \cdot \theta_f^- = 0$.
    \item $\{\theta_{f,X}^+, D^-\} \cup \left\{ E : E \not \in \left\{ E_+, F_+ \right\} \right\}$ is a basis of
      $\DivInf(X)$.
  \end{enumerate}
\end{prop}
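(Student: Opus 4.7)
The plan is to establish the assertions in the order $(2) \Rightarrow (1) \Rightarrow (3) \Rightarrow (4)$. For the first half of $(2)$, I resolve the indeterminacy of $f$ by a sequence $\pi\colon Y \to X$ of blow-ups above $p_-$, so that $F := f \circ \pi\colon Y \to X$ becomes a morphism. For any boundary component $R$ with $E_+, F_+ \notin \Supp R$, the point $p_+$ lies outside $R$; since every strict transform of a boundary divisor in $Y$ is mapped by $F$ into $\{p_+\} \not\subset R$, none of these strict transforms contribute to $F^*R$, and only exceptional divisors of $\pi$ (which all lie over $p_-$) can map onto $R$. Pushing forward by $\pi_*$ annihilates these exceptional components, yielding $f_X^*R = \pi_*F^*R = 0$. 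For the second half of $(2)$, I combine this with the adjoint identity $(f_X^*\alpha)\cdot\beta = \alpha \cdot f_{X,*}\beta$ (from the projection formula applied on a common resolution of $f$ and $f^{-1}$) and $f_{X,*}\theta_X^- = \lambda_1 \theta_X^-$ from Proposition~\ref{PropDynamicsLoxodromicAutomorphism}(6):
\[
\lambda_1 \, (\theta_X^- \cdot R) \;=\; f_{X,*}\theta_X^- \cdot R \;=\; \theta_X^- \cdot f_X^* R \;=\; 0,
\]
and then $\theta^- \cdot R = \theta_X^- \cdot R$ by projection formula since $R$ is defined in $X$, giving $\theta^- \cdot R = 0$.

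For $(1)$, I read off the structure of $f_X^*$ in the basis $\{E_+, F_+, E_3, \ldots, E_n\}$ of $\DivInf(X)_\R$: by $(2)$ the matrix $M$ of $f_X^*$ has zero columns in positions $3, \ldots, n$, so it has the block form $M = \bigl(\begin{smallmatrix}A & 0 \\ B & 0\end{smallmatrix}\bigr)$ with $A$ a $2 \times 2$ block. The local form at $p_+$ from Proposition~\ref{PropDynamicsLoxodromicAutomorphism}(5) gives $f^*u = u^av^b\phi$ and $f^*v = u^cv^d\psi$ with $ad-bc = 1$, where $u, v$ are local equations of $E_+, F_+$; reading off the vanishing orders along the generic points of $E_+$ and $F_+$ then gives $A = \bigl(\begin{smallmatrix}a & c \\ b & d\end{smallmatrix}\bigr)$ and $\det A = 1$. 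The characteristic polynomial of $M$ is therefore $\lambda^{n-2}(\lambda^2 - \mathrm{tr}(A)\lambda + 1)$; since $\lambda_1$ is a root from $f_X^*\theta_X^+ = \lambda_1 \theta_X^+$, Vieta's formulas force the other nonzero root to be $\lambda_1^{-1}$. Since $\lambda_1 > 1$, this is a simple eigenvalue of $M$, and a corresponding eigenvector $D^-$ is obtained by taking an eigenvector $(v_1, v_2)$ of $A$ for $\lambda_1^{-1}$ and completing with $v_i := \lambda_1(b_{i1}v_1 + b_{i2}v_2)$ for $i \geq 3$, the $b_{ij}$ being the entries of the lower block $B$.

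For $(3)$, the same adjoint identity gives $\lambda_1^{-1}(D^- \cdot \theta_X^-) = f_X^*D^- \cdot \theta_X^- = D^- \cdot f_{X,*}\theta_X^- = \lambda_1 (D^- \cdot \theta_X^-)$, forcing $D^- \cdot \theta_X^- = 0$ since $\lambda_1 \neq \lambda_1^{-1}$; by the projection formula, $D^- \cdot \theta^- = D^- \cdot \theta_X^- = 0$. For $(4)$, suppose a relation $c_+\theta_X^+ + c_- D^- + \sum_{i \geq 3}c_iE_i = 0$. Applying $f_X^*$ (which kills the $E_i$ for $i \geq 3$) gives $\lambda_1 c_+\theta_X^+ + \lambda_1^{-1}c_- D^- = 0$, and applying it again gives $\lambda_1^2 c_+\theta_X^+ + \lambda_1^{-2}c_- D^- = 0$. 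The $2 \times 2$ coefficient determinant $\lambda_1^{-1} - \lambda_1$ is nonzero, so $c_+\theta_X^+ = c_- D^- = 0$, whence $c_+ = c_- = 0$, and the remaining $\sum_{i\geq 3}c_i E_i = 0$ gives $c_i = 0$ for all $i$. Since $\dim \DivInf(X)_\R = n$, this proves the set is a basis.

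The main obstacle is the local block computation for $(1)$: one must identify the top-left $2 \times 2$ block of $f_X^*$ with the integer matrix $\bigl(\begin{smallmatrix}a & c\\ b & d\end{smallmatrix}\bigr)$, verifying that the vanishing orders of $f^*u$ and $f^*v$ along the generic points of $E_+$ and $F_+$ are indeed given by the exponents from Proposition~\ref{PropDynamicsLoxodromicAutomorphism}(5), so that $\det A = +1$ and Vieta produces the claimed eigenvalue $\lambda_1^{-1}$. Once this is in place, the remaining assertions are straightforward consequences of spectral analysis on the matrix $M$ and the projection formula.
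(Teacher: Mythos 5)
Your argument follows the same strategy as the paper's, and parts (2), (3) and (4) are correct; your variant of (4), applying $f_X^*$ twice and solving a $2\times 2$ linear system, works as well as the paper's route of first intersecting with $\theta^-$. However, there is a gap in (1): you assert $ad - bc = 1$, but Proposition~\ref{PropDynamicsLoxodromicAutomorphism}(5) only guarantees $ad - bc = \pm 1$. If $ad - bc = -1$, the two nonzero eigenvalues of the block $A$ are $\lambda_1$ and $-\lambda_1^{-1}$ (their product equals $\det A$), so your construction yields a $D^-$ with $f_X^* D^- = -\lambda_1^{-1} D^-$, not $\lambda_1^{-1} D^-$ as the statement requires. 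The paper closes this by noting that one may first replace $f$ by $f^2$, so that the local determinant becomes $(-1)^2 = 1$; this further replacement is compatible with the proposition's hypothesis, which already allows passage to an iterate of $f$. You should add this case distinction before invoking Vieta's formula.
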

\begin{proof}
  If $E$ is a prime divisor at infinity distinct from $E_+$ and $F_+$, then $f_X^* E = 0$ because every prime divisor at
  infinity is contracted to $p_+$ by $f$ and $p_+ \not \in E$.  Now if $R$ satisfies $f_X^* R = 0$, then
  \begin{equation}
    0 = f_X^* R \cdot \theta_{f,X}^- = R \cdot (f_X^{-1})^* \theta_{f,X}^- = \lambda_1 R \cdot \theta_{f,X}^-.
    \label{<+label+>}
  \end{equation}
  Thus $R \cdot \theta_{f,X}^- = 0$. This shows (2).

  Since $f_X^* \theta_{f,X}^+ = \lambda_1
  \theta_{f,X}^+$, we have that
  \begin{equation}
    \theta_{f,X}^+ = \alpha E_+ + \beta F_+ + \cdots
    \label{<+label+>}
  \end{equation}
  where $(\alpha, \beta)$ is an eigenvector of $A = \begin{pmatrix}
    a & c \\
    b & d
  \end{pmatrix}$ of eigenvalue $\lambda_1$. Now, up to replacing $f$ by $f^2$ we can assume that $ad - bc = 1$ and that
  the other eigenvalue of $A$ is $\frac{1}{\lambda_1}$. Let
  $(\gamma, \delta)$ be an associated eigenvector, then
  \begin{equation}
    f_X^* (\gamma E_+ + \delta F_+) = \frac{1}{\lambda_1} (\gamma E_+ + \delta F_+) + R
    \label{<+label+>}
  \end{equation}
  where $R$ is a divisor at infinity which support does not contain $E_+$ or $F_+$. Set $D^- = \gamma E_+ + \delta F_+ +
  \lambda_1 R$, then by (2), $D^-$ satisfies $f_X^* D^- = \frac{1}{\lambda_1} D^-$. This shows (1).

  Now,
  \begin{equation}
    \frac{1}{\lambda_1} D^- \cdot \theta_f^- = \frac{1}{\lambda_1} D^- \cdot \theta_{f,X}^- = (f_X^* D^-) \cdot \theta_{f,X}^- =
    D^- \cdot (f_{X})_* \theta_{f,X}^- = \lambda_1 D^- \cdot \theta_f^-.
    \label{<+label+>}
  \end{equation}
  Thus $D^- \cdot \theta_f^- = 0$. This shows (3).

  Finally, we just have to show that the family $\left\{ \theta_{f,X}^+, D^- \right\} \cup \left\{ E; E \not \in \left\{
  E_+, F_+ \right\} \right\}$ is free. Suppose that
  \begin{equation}
    \alpha \theta_{f,X}^+ + \beta D^- + R = 0
    \label{Eq2}
  \end{equation}
  with $\alpha, \beta \in \R$ and $E_+, F_+ \not \in \Supp R$. Intersecting with $\theta_f^-$ in \eqref{Eq2} and using (2)
  and (3), we get $\alpha = 0$. Then, applying $f_X^*$ to \eqref{Eq2} we get $\beta = 0$. Thus $R = 0$ and we have shown
  (4).
\end{proof}

\section{An invariant adelic divisor}\label{SecInvariantAdelicDivisor}
We now suppose that $D \in \C$ is algebraic and write $\K = \Q(D)$ which is a number field.
\begin{thm}\label{ThmInvariantAdelicDivisor}
  Let $f$ be a loxodromic automorphism of $\cM_D$, then there
  exists a unique, up to multiplication by a positive constant, adelic
  divisor $\overline \theta_f^+ \in \hatDivInf(\cM_D / \OO_\K)$ such that
  \begin{equation}
    f^* \overline \theta_f^+ = \lambda_1 \overline \theta_f^+.
    \label{<+label+>}
  \end{equation}
  Furthermore, $\overline \theta_f^+$ is a strongly nef and effective adelic divisor.
\end{thm}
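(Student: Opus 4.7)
The plan is to construct $\overline{\theta^+}$ as the limit in the boundary norm of the sequence $\overline{D}_n := \lambda_1^{-n}(f^n)^* \overline{A}$ for a well-chosen strongly nef initial adelic divisor $\overline{A}$, and then to prove uniqueness by a boundedness argument on a compact Berkovich space. First I would apply Proposition \ref{PropDynamicsLoxodromicAutomorphism} to fix a cyclic completion $X$ of $\cM_D$ on which $f$ is algebraically stable, with attracting fixed point $p_+$, indeterminacy point $p_-$, and local monomial normal form at each. Let $\sX$ be a projective $\OO_\K$-model of $X$. Using the strictly-effective semipositive-model lemma of \S\ref{SectionAdelicDivisor} applied to the ample divisor at infinity $H$ appearing in the proof of Theorem \ref{ThmDynamicAutomorphismPicardManin}, I produce a strongly nef adelic divisor $\overline{A}$ on $\sX$ whose underlying class is supported at infinity and contains $\theta^+_X$ in its decomposition in the basis of Proposition \ref{PropOperateurPullBack}(4). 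Then each $\overline{D}_n$ is strongly nef by Proposition \ref{PropFunctoriality}.

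The heart of the argument is a Cauchy estimate. Using Proposition \ref{PropOperateurPullBack}, the underlying divisor decomposes as $A = c_+ \theta^+_X + c_- D^- + R$ with $R$ supported away from $\{E_+, F_+\}$ and $c_+ > 0$; hence $f_X^{*n}(A) = c_+ \lambda_1^n \theta^+_X + c_- \lambda_1^{-n} D^-$, so $c(\overline{D}_n) \to c_+ \theta^+_X$ in $\cC(\cM_D)$. For the Green functions at each place $v$, write
\begin{equation*}
  \phi_v := \lambda_1^{-1} g_{A,v}\circ f^{\an} - g_{A,v}, \qquad g_{n+1,v} - g_{n,v} = \lambda_1^{-n}\,\phi_v \circ (f^n)^{\an}.
\end{equation*}
The local monomial form of $f$ at $p_+$ combined with the $\lambda_1$-eigenrelation for the exponents of $\theta^+_X$ along $E_+$ and $F_+$ cancels the leading log poles of $\lambda_1^{-1} g_{A,v}\circ f^{\an}$ along $E_+$ and $F_+$, leaving $\phi_v$ bounded on $X^{\an}_v$ modulo a fixed multiple of the Green function $g_0$ of a chosen boundary divisor $\overline{\sD}_0$ of $\sX$. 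Iterating, any additional pole of $\phi_v \circ (f^n)^{\an}$ can only appear along the unique $\lambda_1^{-1}$-eigendirection $D^-$ of $f_X^*$ at $p_-$ and is therefore controlled by $\lambda_1^{-n}$ times a fixed constant multiple of $g_0$. One concludes
\begin{equation*}
  -C\lambda_1^{-n}\,\overline{\sD}_0 \ \leq\ \overline{D}_{n+k} - \overline{D}_n \ \leq\ C\lambda_1^{-n}\,\overline{\sD}_0 \qquad \text{for all } k\geq 0,
\end{equation*}
so $(\overline{D}_n)$ is Cauchy in $\hat\Div(\cM_D/\OO_\K)$. Its limit $\overline{D}_\infty$ is strongly nef by construction, and passing to the limit in $f^* \overline{D}_n = \lambda_1 \overline{D}_{n+1}$ gives $f^* \overline{D}_\infty = \lambda_1 \overline{D}_\infty$; set $\overline{\theta^+} := c_+^{-1}\,\overline{D}_\infty$.

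For uniqueness, if $\overline{\theta^+}_1$ and $\overline{\theta^+}_2$ are two lifts of $\theta^+$ satisfying the functional equation, then the difference $h_v := g_{1,v} - g_{2,v}$ has no logarithmic pole along the boundary of $X$, hence extends continuously to the compact space $X^{\an}_v$ and is bounded. The relation $h_v \circ f^{\an} = \lambda_1 h_v$ with $\lambda_1 > 1$ iterates to $\lambda_1^n |h_v(p)| \leq \|h_v\|_\infty$ at every $p$ on which all iterates of $f$ are defined, forcing $h_v \equiv 0$ by continuity. The main obstacle I expect is the uniform boundedness of $\phi_v \circ (f^n)^{\an}$ in the Cauchy estimate, simultaneously at all places $v$: one must verify that iteration only accumulates singularity along the one-dimensional eigenspace $\R D^-$ of $f_X^*$, so that a single boundary divisor $\overline{\sD}_0$ absorbs every contribution. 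This is where both the algebraic stability of $f$ on $X$ and the precise spectral description of $f_X^*$ on $\DivInf(X)_\R$ provided by Proposition \ref{PropOperateurPullBack} play an essential role.
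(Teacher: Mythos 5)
Your overall strategy is the same as the paper's: take a reference adelic divisor, iterate $\lambda_1^{-n}(f^n)^*$, and prove a Cauchy estimate in the boundary norm, then normalize. You also correctly identify the one genuinely hard step: the uniform-in-$n$ control of $\phi_v\circ (f^n)^{\an}$ by a single boundary Green function $g_0$. But the proposal does not actually prove this estimate; it is precisely the content of the paper's Claim in \S\ref{SecInvariantAdelicDivisor}, and the argument is not the one-liner you suggest. In the paper the estimate $-M g_0\le h\circ f^k\le M g_0$ is \emph{not} a decay estimate (there is no $\lambda_1^{-k}$ on the right); the $\lambda_1^{-n}$ in the Cauchy bound comes solely from the prefactor in $g_{n+1,v}-g_{n,v}=\lambda_1^{-n}\phi_v\circ(f^n)^{\an}$. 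To bound $h\circ f^k$ one must split $X^{\an}$ into the region $W^-$ away from $p_-$ (where Lemma \ref{LemmeGenericallyInvariantGreenFunctions} makes $h\circ f^k$ vanish at almost every finite place and bounded at the rest) and the invariant region $U^-$ near $p_-$, where the paper first constructs the auxiliary function $G^-$ satisfying $G^-\circ (f^{-1})^{\an}=\lambda_1 G^-$ and uses the comparability of $G^-$ and $g_0$ to transfer a point $y=f^k(x)\in U^-$ back to $x=f^{-k}(y)$ at cost $\lambda_1^{-k}$. Saying the singularity ``only accumulates along $\R D^-$'' does not by itself yield the pointwise bound: the issue is not where the log pole of $\phi_v\circ(f^n)^{\an}$ sits as a divisor, but how to bound the function near $p_-$ uniformly in $n$, and this genuinely requires building $G^-$ first. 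Your choice of a general $A$ (rather than taking $A=\theta^+_X$ an eigenvector of $f_X^*$, as the paper does in Proposition \ref{PropConvergenceModelDivisorSemiSimple} and then extending by the decomposition of Proposition \ref{PropOperateurPullBack}(4)) also muddies this, since $\phi_v$ then has genuine horizontal log poles along $D^-$ and $R$ to begin with.

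The uniqueness argument has a separate gap. If $\overline{\theta^+}_1,\overline{\theta^+}_2$ are two eigenlifts of $\theta^+$, the difference $h_v=g_{1,v}-g_{2,v}$ is the Green function of an adelic divisor whose image under $c$ in $\cW(\cM_D)$ vanishes, but this does \emph{not} imply $h_v$ extends continuously to the compact $X^{\an}_v$ or is globally bounded: you only know $|h_v|\le Cg_0$ from the boundary-norm topology, and $g_0$ blows up along the boundary. Indeed Proposition \ref{PropPropertiesGreenFunctions}(5) gives a continuous extension of $G^+-g$ only over $X^{\an}_v\setminus\{p_-\}$, not over all of $X^{\an}_v$; in general there is no reason for the difference to extend at $p_-$. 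So the maximum-principle step $\lambda_1^n|h_v(p)|\le\|h_v\|_\infty$ is not available as written. The paper avoids this by deducing uniqueness from the global convergence statement (Corollary \ref{CorConvergenceToutLeMonde}): applying it with $\overline{D}=\overline{\theta^+}'$ gives $\overline{\theta^+}'=\lim_n\lambda_1^{-n}(f^n)^*\overline{\theta^+}'=(\theta^+\cdot\theta^-)\overline{\theta^+}=\overline{\theta^+}$. You should either adopt that route, or, if you want a direct argument, replace boundedness of $h_v$ by the weaker $|h_v|\le Cg_0$ and then control $g_0\circ f^n$ as well (which again requires the $G^-$-type estimates).
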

\begin{rmq}
 Of course, the same result holds for $f^{-1}$ with the existence of a unique effective strongly nef invariant adelic
 divisor $\overline \theta_f^-$, the proof is symmetric. With the notations of \S \ref{SubSecCompatibilityAdelicDivisor},
 we must have $w(\overline \theta_f^\pm) = \theta_f^\pm$ (up to multiplication by a positive constant) because of Theorem
 \ref{ThmDynamicAutomorphismPicardManin}. So our notation of $\overline \theta_f^+$ is compatible with Theorem
 \ref{ThmDynamicAutomorphismPicardManin}.
\end{rmq}
The rest of this section is dedicated to the proof of this theorem. We write $\theta^\pm$ instead of $\theta^{\pm}_f$.

\subsection{Two lemmas}
Start with the following lemma.
\begin{lemme}\label{LemmeConvergenceModelVerticalDivisor}
  If $\overline \sD$ is a model vertical divisor, then
  \begin{equation}
    \frac{1}{\lambda_1^n} (f^n)^* \overline \sD \rightarrow 0
    \label{<+label+>}
  \end{equation}
in $\hatDivInf (\cM_D/ \OO_\K)$.
\end{lemme}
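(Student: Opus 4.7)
The strategy is to exploit three features of the setup: a model vertical divisor yields a uniformly bounded Green function on the compact projective Berkovich space, this bound is preserved by pullback under the automorphism $f$, and a suitable boundary divisor $\overline\sD_0$ has Green function bounded below by a positive constant on $\cM_D^{\an}$. The factor $\lambda_1^{-n}$ then forces convergence to zero in the boundary norm.

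First I would observe that a model vertical divisor $\overline\sD$ has generic part $D=0$, so its Green function $g$ vanishes outside a finite set $S$ of finite places. For $v\in S$, $g_v$ is given by the model formula $-\sum_i a_i\log|z_i|$, where $z_i$ are local equations of the vertical components of $\sD_v$; since the zero locus of each $z_i$ lies in a special fiber, the seminorm $|z_i|$ stays strictly positive at every Berkovich point of $X_v^{\an}$ (at the divisorial valuation $\eta_{V_i}$ attached to a vertical component $V_i$ one has $|z_i|(\eta_{V_i})=\exp(-\ord_{V_i}(z_i))>0$). Hence $g_v$ is continuous on the compact Berkovich space $X_v^{\an}$ of any projective completion $X$ of $\cM_D$, uniformly bounded by some $M_v<\infty$; the archimedean part of $\overline\sD$ is likewise bounded. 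Set $M=\max_v M_v$.

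Next, since $f$ is an automorphism of $\cM_D$ over $\K$, it induces a homeomorphism $f_v^{\an}\colon \cM_{D,v}^{\an}\to\cM_{D,v}^{\an}$ at every place $v$. By Proposition \ref{PropFunctoriality}, the Green function of $(f^n)^*\overline\sD$ at $v$ equals $g_v\circ(f_v^{\an})^n$, so homeomorphism invariance gives $\|g_v\circ(f_v^{\an})^n\|_\infty=\|g_v\|_\infty\leq M_v$ on $\cM_{D,v}^{\an}$. To pass to the boundary norm, I would choose the boundary divisor $\overline\sD_0$ so that $g_0$ is strictly positive on $\cM_D^{\an}$ at every place and tends to $+\infty$ at the boundary; this is possible because $1/g_0$ then extends continuously by zero to the compact projective Berkovich space, forcing a uniform positive lower bound $g_0\geq c_0>0$ on $\cM_D^{\an}$. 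One then gets $\|(f^n)^*\overline\sD\|_{\overline\sD_0}\leq M/c_0$ uniformly in $n$, and dividing by $\lambda_1^n\to\infty$ gives the claimed convergence to zero in $\hat\Div(\cM_D/\OO_\K)$.

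The main technical obstacle is the boundedness assertion of the first step: one must confirm that local equations of vertical components induce no poles on the generic-fiber part of the Berkovich space, which is where the purely vertical nature of $\sD$ is used. A secondary but routine subtlety is securing the strict positivity of $g_0$ at finite places, which boils down to a compactness argument once $\overline\sD_0$ is chosen appropriately (for instance by adding a small auxiliary strictly effective contribution).
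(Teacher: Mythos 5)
Your proposal is correct and follows essentially the same route as the paper's proof: observe that a model vertical divisor has a uniformly bounded Green function on $\cM_D^{\an}$ (supported at finitely many places), that this bound is preserved under pullback by the automorphism $f$, and that dividing by $\lambda_1^n$ then forces convergence to zero in the boundary topology. You simply spell out two points the paper leaves implicit — that local equations of vertical components produce no poles on the generic-fiber Berkovich space, and that compactness of $X_v^{\an}$ yields a uniform positive lower bound for the boundary Green function $g_0$ on $\cM_D^{\an}$ — but the argument is the same.
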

\begin{proof}
  Let $g$ be the Green function induced by $\overline \sD$ over $\cM_D^{\an}$. It is a continuous bounded function over
  $\cM_D^{\an}$. Therefore
  \begin{equation}
    \frac{1}{\lambda_1^n} g \circ (f^{\an})^n
    \label{<+label+>}
  \end{equation}
  converges uniformly to zero over $\cM_D^{\an}$. Thus it also converges to zero for the boundary topology.
\end{proof}

From now on, we fix a compactification $X$ of $\cM_D$ that satisfies Proposition \ref{PropDynamicsLoxodromicAutomorphism}. We
also replace $f$ by one of its iterates $s:= f^{N_0}$ such that $s^{\pm 1}$ contracts $X \setminus \cM_D$ to $p_\pm$. We will
show Theorem \ref{ThmInvariantAdelicDivisor} for $s$ and then deduce the result for $f$. Notice that we have
$\theta^\pm_s = \theta^\pm_f$ so we will keep the notation $\theta^\pm$.

\begin{lemme}\label{LemmeGenericallyInvariantGreenFunctions}
   Let $D \in \DivInf(X)_\R$ be a $\R$-divisor at infinity such that $s_X^* D = \mu D$ for some $\mu \in \R$. Let $(\sX,
   \sD)$ be a model of $(X, D)$ over $\OO_\K$. Let $V \subset \spec \OO_\K$ be an open subset such that
  \begin{enumerate}
    \item The indeterminacy locus $I^\pm$ of the rational map $s^{\pm 1} : \sX \dashrightarrow \sX$ satisfies
      \begin{equation}
        I^\pm_{|V} = (I \cap \overline{\left\{ p_\mp \right\}})_{|V}.
      \end{equation}
  \item $\sD$ is horizontal over $V$.
\end{enumerate}
Then, for every finite place $v$ above $V$, let $U^-_v$ be the open subset
\begin{equation}
  U^-_{v} := \left\{ x \in X^{\an}_v : r_{\sX_v} (x) = r_{\sX_v} (p_-) \right\}.
  \label{eq:<+label+>}
\end{equation}

Then, $s^{- 1}$ is defined over $U^-_{v}$, $U^-_v$ is $s^{- 1}$-invariant and if $W_v^- = (X_v^{\an}
\setminus U^-_v) \cap \cM_{D,v}^{\an}$, then
\begin{equation}
\label{eq:egalite-sur-W-mu-non-nul}
\left(g_{\left(\sX_v, \sD_v\right)} \circ s^{\an}\right)_{|W_v^-} = \mu \cdot {g_{\left(\sX_v, \sD_v\right)}}_{|W^-_v}.
\end{equation}
  \end{lemme}

  \begin{proof}
    Let $\pi : \sY \rightarrow \sX$ be the normalised blow up of the indeterminacy locus of $s : \sX \dashrightarrow
    \sX$ such that the lift $S : \sY \rightarrow \sX$ of
    $s$ is regular. We have that
    \begin{equation}
      g_{(\sX, \sD)} \circ s^{\an} = g_{(\sY, S^* \sD)}.
      \label{<+label+>}
    \end{equation}
    Recall that both sides of this equality are functions over $\cM_D^{\an}$ which is an open subset of both
    $X^{\an}$ and $Y^{\an}$.
    Now, over $V$ the birational morphism $\pi$ consists only of horizontal blow ups that is blow ups of $\overline p$
    where $p$ is a closed point of the generic fiber. Furthermore these points $p$ are all above $p_-$. By hypothesis,
    $\pi$ induces an isomorphism between $\sX_V \setminus \overline{\{ p_- \}}$ and $\pi^{-1} (\sX_V \setminus
    \overline{\{ p_- \}})$ because all the indeterminacy points of $s : X
    \dashrightarrow X$ are above $p_-$. Plus, looking at the horizontal part of our divisors we get that
    \begin{equation}
      E = (S^* \sD)_{\text{hor}} - \mu (\pi^* \sD)_{\text{hor}}.
      \label{eq:<+label+>}
    \end{equation}
    is an $\R$-Weil divisor over $Y$ that has support only over the exceptional divisors of $\pi$ (we also write $\pi: Y
    \rightarrow X$ for the sequence of blow-ups induced on the generic fiber) which are all above $p_-$. Thus we get
    that the $\R$-Weil divisor
    \begin{equation}
      \left( S^* \sD - \mu \pi^* \sD \right)_{| \pi^{-1} (\sX_V \setminus \overline{ \{p_-\}})}
      \label{eq:<+label+>}
    \end{equation}
    is trivial, hence we get \eqref{eq:egalite-sur-W-mu-non-nul} for every finite place $v$ over $V$ by Corollary
    \ref{cor:Green-function-zero-if-no-support}.
  \end{proof}

  \subsection{An iterative process}
  \begin{prop}\label{PropConvergenceModelDivisorSemiSimple}
    Let $D \in \DivInf(X)$ be such that $f_X^* D = \mu D$ for some $\mu \in \R$. Let $(\sX, \sD)$ be a model of
    $(X,D)$, then
    \begin{itemize}
      \item If $\mu = \lambda_1$, then $D = \theta_{f,X}^+$ up to renormalisation and $\frac{1}{\lambda_1^n} (s^n)^* \sD$
        converges towards an element $\overline \theta^+(X)$ such that $s^* \overline \theta^+ (X) = \lambda_1 \overline
        \theta^+(X)$ which a priori depends on $X$.
      \item Else $\left|\mu \right| < \left|\lambda_1\right|$ and $\frac{1}{\lambda_1^n} (s^n)^* \sD$ converges towards 0.
    \end{itemize}
  \end{prop}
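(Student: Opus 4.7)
The plan is to reduce both cases to Lemma \ref{LemmeConvergenceModelVerticalDivisor} by setting up a single linear recursion that decouples the spectral action on Cartier classes from a vertical correction. By Proposition \ref{PropOperateurPullBack}(4), the operator $f_X^*$ on $\DivInf(X)_\R$ is diagonalizable with eigenvalues $\lambda_1$, $1/\lambda_1$ and $0$, and the $\lambda_1$-eigenspace is one-dimensional, spanned by $\theta_X^+$. Hence $D = c\,\theta_X^+$ when $\mu = \lambda_1$, and $|\mu| < \lambda_1$ in every other case.

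The key construction is the auxiliary adelic divisor
\[
\overline\sE := \frac{1}{\lambda_1}\, f^* \overline\sD - \frac{\mu}{\lambda_1}\, \overline\sD,
\]
well-defined on any projective model above $\sX$ and a resolution of $f:\sX \dashrightarrow \sX$. Its horizontal Cartier class on $X$ equals $\tfrac{1}{\lambda_1} f_X^* D - \tfrac{\mu}{\lambda_1} D = 0$, so $\overline\sE$ is vertical. I would then apply Lemma \ref{LemmeConvergenceModelVerticalDivisor} and, by reading its proof, extract the quantitative geometric bound
\[
\Bigl\| \tfrac{1}{\lambda_1^n}(f^n)^* \overline\sE \Bigr\|_{\mathrm{bdry}} \leq \frac{C}{\lambda_1^n},
\]
obtained by bounding $g_{\overline\sE}$ uniformly on $\cM_D^{\an}$ and normalizing the boundary divisor $\overline\sD_0$ so that $g_0 \geq 1$ on $\cM_D^{\an}$ (possible because $g_0 \to +\infty$ at $\partial\cM_D^{\an}$ and $X^{\an}$ is compact). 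Setting $\overline\sD_n := \tfrac{1}{\lambda_1^n}(f^n)^* \overline\sD$ and pulling back the identity $\tfrac{1}{\lambda_1} f^* \overline\sD = \tfrac{\mu}{\lambda_1} \overline\sD + \overline\sE$ by $\tfrac{1}{\lambda_1^n}(f^n)^*$ yields the recursion
\[
\overline\sD_{n+1} = \frac{\mu}{\lambda_1}\, \overline\sD_n + \frac{1}{\lambda_1^n}(f^n)^* \overline\sE,
\]
whose explicit solution is
\[
\overline\sD_n = \Bigl(\frac{\mu}{\lambda_1}\Bigr)^n \overline\sD + \sum_{k=0}^{n-1} \Bigl(\frac{\mu}{\lambda_1}\Bigr)^{n-1-k}\, \frac{1}{\lambda_1^k}(f^k)^* \overline\sE.
\]

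When $\mu = \lambda_1$ this reduces to $\overline\sD + \sum_{k=0}^{n-1} \tfrac{1}{\lambda_1^k}(f^k)^* \overline\sE$, which is absolutely summable in the boundary topology thanks to the $C\lambda_1^{-k}$ bound; the limit $\overline\theta^+(X)$ then satisfies $f^* \overline\theta^+(X) = \lambda_1\, \overline\theta^+(X)$ by the continuity of $f^*$ from Proposition \ref{PropFunctoriality}. When $|\mu| < \lambda_1$, the first term decays because $\overline\sD$ has finite boundary norm, and the second is a double geometric sum whose norm tends to $0$ at rate $\max(|\mu|/\lambda_1,\, 1/\lambda_1)^n$ (with at most a polynomial factor in the degenerate case where both ratios coincide), so $\overline\sD_n \to 0$. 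The main obstacle I expect is extracting the quantitative geometric decay from Lemma \ref{LemmeConvergenceModelVerticalDivisor}, which is stated only qualitatively: this amounts to checking, uniformly in the place, that $g_{\overline\sE}$ is bounded on $\cM_D^{\an}$ (true since $\overline\sE$ has vertical components over only finitely many finite places and bounded continuous Green functions at the archimedean ones), and that the boundary divisor admits a normalization with $g_0 \geq 1$ — both elementary but requiring some care to set up correctly.
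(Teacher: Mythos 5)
The overall scaffolding of your argument — the auxiliary divisor $\overline\sE := \tfrac{1}{\lambda_1} f^* \overline\sD - \tfrac{\mu}{\lambda_1}\overline\sD$, the recursion $\overline\sD_{n+1} = \tfrac{\mu}{\lambda_1}\overline\sD_n + \tfrac{1}{\lambda_1^n}(f^n)^*\overline\sE$, and the resulting double-geometric sum — is exactly the paper's recursion expressed in terms of adelic divisors rather than Green functions (the paper's $h$ is precisely the Green function of your $\overline\sE$, and its Equation (Eq1) is your explicit solution). So the structure is fine. The gap is the pivotal claim that $\overline\sE$ is a \emph{vertical} adelic divisor, which is false, and which is where all the real work of the proposition lives.

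You justify verticality by saying the horizontal Cartier class on $X$ of $\overline\sE$ is $\tfrac{1}{\lambda_1} f_X^* D - \tfrac{\mu}{\lambda_1} D = 0$. But $f_X^* D$ is by definition the \emph{pushforward} to $\DivInf(X)$ of $f^* D \in \cC(\cM_D)$, that is $f_X^* D = \pi_*(F^* D)$ where $\pi\colon Y \to X$ resolves $f$ and $F\colon Y \to X$ is the regular lift. The hypothesis $f_X^* D = \mu D$ therefore says $\pi_*(F^* D - \mu\, \pi^* D) = 0$, i.e.\ that $F^* D - \mu\, \pi^* D$ is $\pi$-exceptional — \emph{not} that it is zero. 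For a generic eigendivisor $D$ this difference is a genuinely nonzero divisor in $\DivInf(Y)$, supported on the exceptional curves above the indeterminacy point $p_-$. The paper says exactly this when it records that ``$h$ is a Green function of $\tfrac{1}{\lambda_1}F^* D - \tfrac{\mu}{\lambda_1}\pi^* D$.'' So $c(\overline\sE) \neq 0$ in $\cC(\cM_D)$, $\overline\sE$ is not vertical, and Lemma~\ref{LemmeConvergenceModelVerticalDivisor} does not apply.

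Concretely this means your intended bound fails: $g_{\overline\sE}$ is \emph{not} bounded on $\cM_D^{\an}$. It extends continuously and is bounded on $W^- = X^{\an}\setminus U^-$ (this is Lemma~\ref{LemmeFonctionH} in the paper), but it diverges as one approaches the exceptional divisors sitting over $p_-$, i.e.\ inside $U^-$. Hence $\sup_{\cM_D^{\an}} |g_{\overline\sE}/g_0|$ need not be finite by a simple compactness argument, and a fortiori you do not get $\|\tfrac{1}{\lambda_1^n}(f^n)^*\overline\sE\|_{\mathrm{bdry}} \leq C\lambda_1^{-n}$ for free. This is precisely the obstacle the paper's proof is built to handle: it shows $h\circ f^k$ vanishes on $W^-_V$ for finite places over $V$ (Lemma~\ref{LemmeGenericallyInvariantGreenFunctions}), and at the remaining places it bounds $|h(f^k(x))/g_0(x)|$ on $U^-$ by passing through the $(f^{-1})$-invariant Green function $G^-$ and its exact scaling $G^-(f^{-k}y) = \lambda_1^{-k} G^-(y)$ — the constants $M_3, M_4$ in the Claim. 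You need an argument of this kind near $p_-$; the verticality shortcut as stated does not close the gap.
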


  To prove the proposition, we study the sequence of adelic divisors
     \begin{equation}
       \frac{1}{\lambda_1^n} (s^n)^* \sD - \left(\frac{\mu}{\lambda_1}\right)^n \sD.
      \label{<+label+>}
    \end{equation}
    We show that if $|\mu| < \lambda_1$, then this sequence tends to $0$ and if $\mu = \lambda_1$, then this sequence
    converges towards an adelic divisor. Looking at Green functions, we need to show that the sequence
    \begin{equation}
      u_n := \frac{1}{\lambda_1^n} g_{(\sX, \sD)} \circ (s^n)^{\an} - \left(\frac{\mu}{\lambda_1}\right)^n g_{(\sX, \sD)}.
      \label{<+label+>}
    \end{equation}
    converges over $\cM_D^{\an}$ with respect to the boundary topology. We split the proof into two parts. First we show
    that away from the indeterminacy point $p_-$, the convergence is actually uniform. Then, we will study in more
    details what happens near $p_-$ and show the convergence for the boundary topology there.

    \subsection{Convergence away from $p_-$} \label{SubSecConvAwayFromPminus}
    Since $p_- \in X(\K)$ we write $p_-^v$ for its image in $X^{\an}_v$.
    There exists an open neighbourhood $U^- := \bigsqcup_v U^-_v$ of $\bigcup_v \{p_-^v\}$ in $X^{\an}$ such that
    $s^{-1}$ is defined over $U^-$ and $U^-$ is $s^{-1}$-invariant. Indeed,
    let $V \subset \spec \OO_\K$ be an open subset that satisfies the hypothesis of Lemma
    \ref{LemmeGenericallyInvariantGreenFunctions}. For every finite place $v$ above $V$ we set $U^-_v = \left\{ x :
      r_{\sX_v} (x) = r_{\sX_v} (p_-^v) \right\}$ as the open subset of
    $X_v^{\an}$ defined in Lemma \ref{LemmeGenericallyInvariantGreenFunctions}. For the
    finitely many remaining places $v$ (including also all the infinite ones), we know by Proposition
    \ref{PropDynamicsLoxodromicAutomorphism} that $p_-^v$ is an attracting fixed point of $(s^{-1}_v)^{\an}$. Let
    $U^-_v$ be an $(s^{-1}_v)^{\an}$ invariant open neighbourhood of $p_-^v$ such that $(s^{-1}_v)^{\an} (U_v^-) \Subset
    U_v^-$. Define
    \begin{equation}
    U^- := \bigsqcup_{v \in M(\K)} U^-_v.
  \end{equation}
    It is an $(s^{-1})^{\an}$
    invariant open subset. Let $W^-$ be its complement, it is $s^{\an}$-invariant.

    Set
    \begin{equation}
      h = u_1 = \frac{1}{\lambda_1} g_{(\sX, \sD)} \circ
      s^{\an} - \frac{\mu}{\lambda_1} g_{(\sX, \sD)}.
    \end{equation}

    \begin{lemme}\label{LemmeFonctionH}
      The function $h$ extends to a continuous bounded function over $W^-$ and
      \begin{equation}
        h_{W_{V[\fin]}^-} \equiv 0
        \label{<+label+>}
      \end{equation}
    \end{lemme}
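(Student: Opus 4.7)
The plan is to prove the two claims of the lemma separately: first the vanishing on the places above $V$, then the continuous extension and the uniform bound on the finitely many remaining places.

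For the vanishing, I would apply the already-established Lemma \ref{LemmeGenericallyInvariantGreenFunctions} directly. For any finite place $v$ above $V$, Equation \eqref{EqEgaliteGreenFunctionPlus} yields
\[
\left( g_{(\sX_v, \sD_v)} \circ f^{\an} \right)\big|_{W_v^-} = g_{(\sX_v, f_X^* D)}\big|_{W_v^-}.
\]
Since $\sD$ has no vertical component above $V$ and $f_X^* D = \mu D$ by hypothesis, the right-hand side equals $\mu \cdot g_{(\sX_v, \sD_v)}$ on $W_v^-$. Substituting this into the definition of $h$ gives $h|_{W_v^-} = 0$ immediately.

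For the continuous extension to all of $W^-$, the guiding principle is that both $g_{(\sX, \sD)} \circ f^{\an}$ and $\mu \cdot g_{(\sX, \sD)}$ are Green functions of the same $\mathbf{R}$-divisor $f_X^* D = \mu D$ on the open set where $f^{\an}$ is regular, so their difference $\lambda_1 h$ should extend continuously across the boundary divisor away from $p_-$. To make this explicit at a point $p \in \Supp D \cap W_v^-$, I would write $D = \sum_j b_j E_j$ and pick local equations $z_j$ of $E_j$ at $p$. Since $f$ is regular at $p$ (because $p \neq p_-$), local equations $w_k$ of the prime divisors passing through $f(p)$ satisfy $w_k \circ f = \prod_j z_j^{m_{jk}} u_k$ with $u_k$ locally invertible. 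The relation $\sum_k b_k m_{jk} = \mu b_j$ dictated by $f_X^* D = \mu D$ forces the logarithmic singularities of $g \circ f^{\an}$ and $\mu g$ to cancel exactly, leaving only the continuous contribution $-\sum_k b_k \log |u_k|$.

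For the uniform bound, each $W_v^-$ is closed in the compact Berkovich space $X_v^{\an}$, hence compact, so the continuous function $h|_{W_v^-}$ is automatically bounded. By the first step, $h$ vanishes on $W_v^-$ for all places above $V$, so only the finitely many places outside $V$ together with the archimedean ones contribute, and a finite maximum of these bounds gives a uniform bound on $h$ over all of $W^-$.

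The main obstacle will be the local-analytic computation in the second step: one must check that the factorization of $w_k \circ f$ into a monomial in the $z_j$'s times a unit holds uniformly at every Berkovich point of $W_v^-$, archimedean and non-archimedean alike, and that the construction of $U^-$ indeed keeps $W^-$ safely away from the indeterminacy point $p_-$ where such a factorization would fail. The latter is precisely what was arranged in the preamble to Section \ref{SubSecConvAwayFromPminus}.
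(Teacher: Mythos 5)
Your proof is correct and follows the same route as the paper: the vanishing above $V$ comes directly from Lemma \ref{LemmeGenericallyInvariantGreenFunctions} together with $f_X^*D=\mu D$ and the absence of vertical components over $V$; the continuous extension at the remaining places is the standard fact that two Green functions of the same $\R$-divisor ($\tfrac{\mu}{\lambda_1}D$ in this case) differ by a continuous function wherever $f$ is regular, which holds on $W_v^-$ since $p_-\notin W_v^-$; and boundedness follows because only finitely many compact pieces $W_v^-$ are left to control. The only difference is that you unroll the local-analytic computation behind the Green-function comparison, which the paper takes for granted in one line — this is sound, though the "main obstacle" you flag is already resolved by the general theory of model Green functions and the regularity of $f$ off $p_-$, so it is not a genuine gap.
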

    \begin{proof}
    First, by Lemma
    \ref{LemmeGenericallyInvariantGreenFunctions} we have $h \equiv 0 $ over $W_v^-$ for every finite place $v \in
    V[\fin]$.
    If $v \not \in V[\fin]$, then since $f_X^* D = \mu D$, we have that $h$ extends to a continuous function
    over $W^-_v$ because $p_-^v \not \in W^-_v$. Since $W^- \setminus W_{V[\fin]}^-$ is compact we have that $h$ is a bounded
    continuous function over $W^-$.
    \end{proof}

    \begin{prop}\label{PropConvergenceSurWmoins}
      If $|\mu| < \lambda_1$, then $u_n$ converges uniformly to $0$ over $W^-$.

      If $\mu = \lambda_1$, then $u_n$ converges uniformly towards a continuous function $h^+$ over $W^-$ such that
      \begin{enumerate}
        \item $h^+_{|W_{V[\fin]}^-} \equiv 0$.
        \item If $G^+ = h^+ + g_{(\sX, \sD)}$, then $G^+ \circ s^{\an} = \lambda_1 G^+$.
        \label{<+label+>}
      \end{enumerate}
    \end{prop}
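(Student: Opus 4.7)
The plan is to exploit a telescoping recursion for $u_n$. A direct computation shows that
\begin{equation*}
u_{n+1} = \frac{1}{\lambda_1}\, u_n \circ f^{\an} + \left(\frac{\mu}{\lambda_1}\right)^{n} h,
\end{equation*}
and since $u_0 = 0$ and $u_1 = h$, unfolding this recursion produces the closed form
\begin{equation*}
u_n = \sum_{k=0}^{n-1} \frac{\mu^{n-1-k}}{\lambda_1^{n-1}}\, h\circ (f^{\an})^k.
\end{equation*}
I will use in a crucial way that the set $W^-$ is $f^{\an}$-invariant: indeed $U^-$ is $(f^{-1})^{\an}$-invariant by construction (Lemma~\ref{LemmeGenericallyInvariantGreenFunctions} for places above $V$, and the attracting fixed point description of $p_-$ for the remaining ones), so its complement $W^-$ is $f^{\an}$-invariant. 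Combined with Lemma~\ref{LemmeFonctionH}, which gives that $h$ is bounded on $W^-$, this makes every iterate $h\circ (f^{\an})^k$ uniformly bounded on $W^-$ by $\|h\|_{W^-}$.

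Granting this, the case $|\mu|<\lambda_1$ is a direct geometric estimate: one bounds
\begin{equation*}
\sup_{W^-}|u_n| \;\leq\; \|h\|_{W^-}\,\sum_{k=0}^{n-1} \frac{|\mu|^{n-1-k}}{\lambda_1^{n-1}},
\end{equation*}
and this sum is $O\bigl((|\mu|/\lambda_1)^{n}\bigr)$ (or $O(n/\lambda_1^{n-1})$ if $\mu=0$), hence tends to $0$ uniformly. In the case $\mu=\lambda_1$ the formula collapses to $u_n = \sum_{k=0}^{n-1} \lambda_1^{-k}\, h\circ(f^{\an})^k$, and since $\lambda_1>1$ and $h$ is bounded on the $f^{\an}$-invariant set $W^-$, this partial sum converges uniformly on $W^-$ to the limit $h^+ := \sum_{k\geq 0} \lambda_1^{-k}\, h\circ(f^{\an})^k$, which is the candidate function.

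It then remains to verify the two announced properties of $h^+$. Property (1) follows because $W_V^-$ is also $f^{\an}$-stable (same argument, restricted to the places above $V$) and $h|_{W_V^-}\equiv 0$ by Lemma~\ref{LemmeFonctionH}, so every summand vanishes identically on $W_V^-$. For property (2), write $g := g_{(\sX,\sD)}$; the defining relation of $h$ combined with $\mu=\lambda_1$ gives $g\circ f^{\an} = \lambda_1(g+h)$, while a shift of summation index in the series defining $h^+$ yields $h^+\circ f^{\an} = \lambda_1(h^+ - h)$. Adding these two identities produces $(g+h^+)\circ f^{\an} = \lambda_1(g+h^+)$, which is exactly $G^+\circ f^{\an} = \lambda_1 G^+$. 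The only delicate point in this whole argument is the $f^{\an}$-invariance of $W^-$ and $W_V^-$, but this is precisely the reason why the neighbourhoods $U_v^-$ were defined place by place as either attracting basins or reduction fibres of $p_-$; once this invariance is recorded, the rest is a clean telescoping computation.
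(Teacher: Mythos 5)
Your proof is correct and follows essentially the same route as the paper: derive a recursion for $u_n$, unfold it into the closed form $u_n = \sum_{k=0}^{n-1} \mu^{n-1-k}\lambda_1^{-(n-1)}\, h\circ(f^{\an})^k$, and bound uniformly using that $h$ is bounded on the $f^{\an}$-invariant set $W^-$ (Lemma~\ref{LemmeFonctionH}). One small slip: the parenthetical ``$O(n/\lambda_1^{n-1})$ if $\mu=0$'' is misplaced---that weaker bound is the one needed when $|\mu|=1$, whereas $\mu=0$ leaves a single term---but this is harmless and the conclusion is unaffected.
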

    \begin{proof}We have
    \begin{align}
      u_n &= \frac{1}{\lambda_1^{n-1}} \left(\frac{1}{\lambda_1} g_{(\sX,\sD)} \circ s\right) \circ s^{n-1} - \left(
      \frac{\mu}{\lambda_1} \right)^n g_{(\sX,\sD)} \\
      &= \frac{1}{\lambda_1^{n-1}} \left( h + \frac{\mu}{\lambda_1} g_{(\sX,\sD)} \right) \circ s^{n-1} -
      \left(\frac{\mu}{\lambda_1}\right)^n g_{(\sX,\sD)} \\
      &= \frac{1}{\lambda_1^{n-1}} h \circ s^{n-1} + \frac{\mu}{\lambda_1} u_{n-1}.
      \label{<+label+>}
    \end{align}
    Therefore,
    \begin{equation}
      u_n = \sum_{\ell =0}^{n-1} \frac{\mu^\ell}{\lambda_1^{n-1}}  h \circ s^{n-1-\ell}.
      \label{Eq1}
    \end{equation}
    If $|\mu| < \lambda_1$, let $M = \max_{W^-} |h|$, then
    \begin{equation}
      \sup_{W^-} | u_n (x) | \leq \frac{M}{\lambda_1^{n-1}} \frac{|\mu|^n - 1}{|\mu| - 1} \xrightarrow[n \rightarrow +
      \infty]{} 0.
      \label{<+label+>}
    \end{equation}
    If $\mu = \pm 1$, then
    \begin{equation}
      \sup_{W^- } | u_n (x) | \leq \frac{M n}{\lambda_1^{n-1}} \xrightarrow[n \rightarrow + \infty]{} 0.
      \label{<+label+>}
    \end{equation}
    If $\mu = \lambda_1$, then write $\sD^+$ for $\sD$. Equation \eqref{Eq1} becomes
          \begin{equation}
            u_n = \sum_{k=0}^{n-1} \frac{1}{\lambda_1^k} h \circ s^k.
            \label{<+label+>}
          \end{equation}
          Since $h$ is bounded over $W^-$, $u_n$ converges uniformly over $W^-$ towards a continuous function $h^+$.
          By Lemma \ref{LemmeGenericallyInvariantGreenFunctions}, it is clear that ${u_n}_{|W_{V[\fin]}^-} \equiv 0$, thus
          $h^+_{W_{V[\fin]}^-} \equiv 0$.
          If $G^+ = h^+ + g_{(\sX, \sD^+)}$, then it is defined on $W^- \cap \cM_D^{\an}$ and satisfies $G^+ \circ
          s^{\an} = \lambda_1 G^+$.
        \end{proof}

          If $\sD^-$ is a model of $\theta_{f,X}^-$, we construct in the same fashion an open $s^{\an}$-invariant
          neighbourhood $U^+$ of $\bigcup_v \{p_+^v\}$ with $W^+ := X^{\an} \setminus U^+$ and the function $G^-$ which
          is defined over $W^+ \cap \cM_D^{\an}$, satisfies $G^- \circ (s^{-1})^{\an} = \lambda_1 G^-$ and is such that
          $G^- - g_{(\sX, \sD^-)}$ extends to a continuous function $h^-$ over $W^+$ that satisfies
          \begin{equation}
            h^-_{|W^+_{V[\fin]}} \equiv 0
            \label{<+label+>}
          \end{equation}
          Furthermore, we can choose $U^-$ and $U^+$ such that $U^- \Subset W^+$ and $U^+ \Subset W^-$. For $v \in
          V[\fin]$ we already have $U^-_v \cap U^+_v = \emptyset$. For the
        places outside $V[\fin]$ we can shrink $U^+_v, U^-_v$ such that it is the case. We also shrink $U^{\pm}_v$ for
          $v \not \in V[\fin]$ such that $G^+_{|U_v^+} \geq 1, G^-_{|U_v^-} \geq 1$, this
          is always possible because $G^\pm - g_{(\sX, \sD^\pm)}$ extends to a continuous function at $p_\pm$ and
          $\theta_{f,X}^{\pm}$ is effective.

          \subsection{Convergence everywhere}
          Define
          \begin{equation}
            \sU = \sX \setminus \left( \overline{X \setminus \cM_D} \cup \bigcup_{\m \not \in V} \sX_{\m} \right)
          \end{equation}
          where $\sX_\m = \sX \times_{\spec \OO_\K} \spec \OO_\K / \m$.
          This is a quasiprojective model of $\cM_D$ over $\OO_\K$. Let $\overline{\sD_0}$ be a model of $\theta_{f,X}^-$ and
          a boundary divisor of $\sU$
          in $\sX$. If $g_0$ is the Green function of $\overline{\sD_0}$ over $\cM_D^{\an}$ then we can suppose without loss
          of generality that for all $v \not \in V[\fin], g_{0, v} \geq 1$. We have already constructed the Green
          functions $G^\pm$ away from $p_\mp$.

          \begin{lemme}
            For every place $v \not \in V[\fin]$, over $U_v^- \cap \cM_D^{\an}$ the functions
            \begin{equation}
              \frac{G^-}{g_0}, \quad \frac{g_0}{G^-}
              \label{<+label+>}
            \end{equation}
            are continuous and bounded.
          \end{lemme}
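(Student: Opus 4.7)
The plan is to exploit that both $G^-$ and $g_0$ are Green functions of the same divisor class $\theta_X^-$: the logarithmic singularities along the boundary cancel in the difference $G^- - g_0$, which therefore extends continuously across the point $p_-$. Since $\overline{U_v^-}$ is a compact subset of the projective Berkovich space $X_v^{\an}$, this continuous extension is bounded there, and combined with the lower bounds $G^-, g_0 \geq 1$ on $U_v^-$ imposed just before the statement, this forces $G^-/g_0$ and $g_0/G^-$ to be positive, continuous, and bounded.

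The first step is to unpack the construction. By the construction preceding the lemma, $G^- = h^- + g_{(\sX, \sD^-)}$ on $W^+ \cap \cM_D^{\an}$, where $h^-$ extends to a continuous bounded function on $W^+$ and $g_{(\sX, \sD^-)}$ is the model Green function of some model $\sD^-$ of $\theta_X^-$. Since $\overline{\sD_0}$ is also a model of $\theta_X^-$, the definition of Green function in \S \ref{SectionAdelicDivisor} yields that $g_{(\sX, \sD^-)} - g_0$ extends to a continuous function on all of $X_v^{\an}$: near each point of $(\Supp \theta_X^-)^{\an}$, one adds and subtracts the same local logarithmic term $\sum_i a_i \log |z_i|$ to each Green function, and the resulting continuous extensions subtract to a continuous function. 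Combining, $G^- - g_0 = h^- + (g_{(\sX, \sD^-)} - g_0)$ is continuous on $W^+ \cap X_v^{\an}$, and since $U^-_v \Subset W^+ \cap X_v^{\an}$ the constant $M := \sup_{\overline{U_v^-}} |G^- - g_0|$ is finite by compactness of $\overline{U_v^-}$.

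The second step combines this bound with the lower bounds $g_0 \geq 1$ on $X_v^{\an}$ for $v \not\in V[\fin]$ and $G^-_{|U_v^-} \geq 1$, both imposed in the paragraph preceding the lemma. On $U_v^- \cap \cM_D^{\an}$ this immediately gives
\[
\left|\frac{G^-}{g_0} - 1\right| = \frac{|G^- - g_0|}{g_0} \leq M \quad \text{and} \quad \left|\frac{g_0}{G^-} - 1\right| = \frac{|g_0 - G^-|}{G^-} \leq M,
\]
so both ratios lie in the bounded interval $(0, 1+M]$, and their continuity on $U_v^- \cap \cM_D^{\an}$ follows from the continuity and strict positivity of $G^-$ and $g_0$ there. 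The only delicate point in the argument is the cancellation of singularities in $g_{(\sX, \sD^-)} - g_0$, but this is tautological from the definition of Green function given in \S \ref{SectionAdelicDivisor}; there is no genuine analytic obstacle, and the lemma is essentially a formal consequence of the preceding construction together with compactness of $X_v^{\an}$.
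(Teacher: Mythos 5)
Your proof is correct, and it follows essentially the same route as the paper's: both decompose $G^- = h^- + g_{(\sX, \sD^-)}$ with $h^-$ bounded continuous on $W^+$, compare $g_{(\sX, \sD^-)}$ with the boundary Green function $g_0$, and conclude using the pre-arranged lower bounds $g_0 \geq 1$, $G^-_{|U_v^-} \geq 1$ for places $v \notin V[\fin]$.

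The one place where you are more careful than the paper's terse write-up is the comparison of $g_{(\sX, \sD^-)}$ with $g_0$. The paper invokes the boundary-norm inequality $-A\,\sD_0 \leq \sD^- \leq A\,\sD_0$, which by itself only yields $-A g_0 \leq g_{(\sX, \sD^-)} \leq A g_0$; combined with $g_0\geq 1$ this bounds $G^-/g_0$ from above, but it does not by itself control $g_0/G^-$ (since $G^- \geq 1$ does not prevent $g_0/G^-$ from being as large as $g_0$, which is unbounded). Your observation that both $g_0$ and $g_{(\sX, \sD^-)}$ are Green functions of the \emph{same} $\R$-divisor $\theta_X^-$ on $X$ — so their logarithmic singularities cancel and $g_{(\sX, \sD^-)} - g_0$ extends to a continuous (hence bounded on the compact $\overline{U_v^-}$) function — is exactly what makes the two-sided estimate transparent: $|G^- - g_0| \leq M$ on $\overline{U_v^-}$ together with $g_0, G^- \geq 1$ gives both $G^-/g_0 \leq 1+M$ and $g_0/G^- \leq 1+M$ at once. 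In substance this is the observation the paper's norm inequality is encoding (the difference $\sD^- - \sD_0$ is vertical), but your formulation makes the cancellation of singularities explicit, which is the genuinely necessary point for the $g_0/G^-$ direction.
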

          \begin{proof}
            With the notations of \S \ref{SubSecConvAwayFromPminus}, there exists a constant $A >0$ such that
            \begin{equation}
              -A \sD_0 \leq \sD^- \leq A \sD_0
              \label{<+label+>}
            \end{equation}
            and over $U_v^-$ we have by Proposition \ref{PropConvergenceSurWmoins} that
            \begin{equation}
              G^- = h^- + g_{(\sX, \sD^-)}
              \label{<+label+>}
            \end{equation}
            where $h^-$ is continuous and bounded. This shows the result.
          \end{proof}
    Let $\pi : \sY \rightarrow \sX$ be a resolution of indeterminacies of $s : \sX \dashrightarrow \sX$ and let $\sS : \sY
    \rightarrow \sX$ be a lift of $s$, write $S : Y \rightarrow X$ for the restriction to the generic fiber. Then, $h$
    is a green function of $\frac{1}{\lambda_1}S^* D - \frac{\mu}{\lambda_1}\pi^* D$. Therefore, there exists a constant
    $A$ such that
    \begin{equation}
      -A \overline{\sD_0} \leq \frac{1}{\lambda_1} S^* \overline \sD - \frac{\mu}{\lambda} \pi^* \overline \sD \leq A
      \overline{\sD_0}.
      \label{ }
    \end{equation}
    Thus, $\frac{h}{g_0} $ is a continuous bounded function over $\cM_D^{\an}$.
    We show that the sequence $u_n$ converges with respect to the boundary topology. Set the following constants
    \begin{equation}
      M_0 = \sup_{\cM_D^{\an}} \left|\frac{h}{g_0}\right|, \quad M_1 = \sup_{\cM^{\an}_{D, V[\fin]^c}}
       \frac{1}{\left|g_0\right|},  \quad M_2 = \sup_{W^-} \left| h \right|
     \end{equation}
     \begin{equation}
       M_3 = \sup_{U^-_{V[\fin]^c} \bigcap \cM_D^{\an}} \left|\frac{g_0}{G^-}\right|, \quad M_4 =
      \sup_{U^-_{V[\fin]^c} \bigcap \cM_D^{\an}} \left|\frac{G^-}{g_0}\right|
  \end{equation}
  where $V[\fin]^c$ is the set of places of $\K$ outside $V[\fin]$, including the archimedean ones.
    \begin{claim}
      Set $M := \max (M_2 M_1, M_0 M_3 M_4)$, then for every $k \geq 0$
      \begin{equation}
        -M g_0 \leq h \circ (s^{\an})^k \leq M g_0
        \label{EqInequality}
      \end{equation}
      over $\cM_D^{\an}$.
    \end{claim}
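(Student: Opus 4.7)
The plan is to show $\left|h \circ f^k\right|(x) \leq M g_0(x)$ for every $x \in \cM_D^{\an}$ and every $k \geq 0$ — the quantifier over $k$ in the statement is evidently intended to track the iterated composition $h \circ f^k$, since otherwise it would be vacuous and the constants $M_3, M_4$ would play no role. My starting point is the forward invariance of $W^-$ under $f$: since $U^-$ is $f^{-1}$-invariant by construction and $f$ is regular on $W^-$ (its unique boundary indeterminacy $p_-$ lies in $U^-$), having $x \in W^-$ with $f(x) \in U^-$ would force $x = f^{-1}(f(x)) \in f^{-1}(U^-) \subset U^-$, a contradiction. Symmetrically, $U^+$ is $f$-invariant and disjoint from $U^-$, so whenever $f^k(x) \in U^-$ the entire forward orbit segment $f^j(x)$ for $0 \leq j \leq k$ must lie in $W^+$, where $G^-$ is defined.

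I would then split the argument into three cases according to where $y := f^k(x)$ lies. If $y \in W^-$, the invariance gives $x \in W^-$; at a place in $V[\fin]$, Lemma \ref{LemmeFonctionH} gives $h(y) = 0$, and at any other place the bound $|h| \leq M_2$ on $W^-$ combined with $g_0(x) \geq 1/M_1$ yields $|h(y)| \leq M_2 M_1 g_0(x)$. If $y \in U^-$ at a place outside $V[\fin]$, then $x \in U^-$ as well and $f^j(x)$ stays in $U^- \subset W^+$ throughout, so iterating the functional equation $G^- \circ f = \lambda_1^{-1} G^-$ gives $G^-(y) = \lambda_1^{-k} G^-(x)$. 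Combining the global bound $|h| \leq M_0 g_0$ with $g_0(y) \leq M_3 G^-(y)$ and $G^-(x) \leq M_4 g_0(x)$ produces $|h(y)| \leq M_0 M_3 M_4 \lambda_1^{-k} g_0(x)$. Finally, if $y \in U^-$ at a finite place in $V$, the same computation will go through once the base open set $V$ has been chosen small enough that the models $\sD_0$ and $\sD^-$ agree on the fibers above $V$; on those fibers $g_0 = G^-$, no $M_3, M_4$ factors are needed, and the bound reduces to $|h(y)| \leq M_0 \lambda_1^{-k} g_0(x)$.

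The hard part will be the bookkeeping needed to guarantee that $f^j(x)$ stays in $W^+$ for the whole segment $j = 0, \ldots, k$ so that the functional equation for $G^-$ may be iterated $k$ times; this is exactly what the disjointness and $f$-invariance of $U^+$ buy us. A minor accompanying point will be the compatibility of the model Green functions $g_0$ and $G^-$ at finite places in $V$, handled from the start by shrinking $V$ so that $\sD_0 - \sD^-$ has no vertical components above $V$; once this is in place, the three-case argument packages together to give the uniform bound by $M g_0$, with $M = \max(M_2 M_1, M_0 M_3 M_4)$ matching the three cases above.
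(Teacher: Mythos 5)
Your reading of the claim as $|h \circ f^k| \leq M g_0$ is the intended one, and your three-case split (by whether $f^k(x)$ lands in $W^-$ or $U^-$, and by the place lying in or outside $V[\fin]$) together with the roles assigned to $M_0,\ldots,M_4$ follows the paper's argument exactly; the orbit-segment bookkeeping you single out is handled there in one stroke by the $(f^{-1})^{\an}$-invariance of $U^-$, which places the whole segment $x, f(x),\ldots,f^k(x)$ inside $U^-\subset W^+$ as soon as $f^k(x)\in U^-$, while the forward-invariance of $W^-$ you argue for in the first case is not actually needed since $|h(f^k(x))|\leq M_2$ uses only $f^k(x)\in W^-$. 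Your observation about shrinking $V$ so that $\sD_0$ and $\sD^-$ agree above $V$ is a genuine clarification of the equality $g_{(\sX_V,\sD^-_V)}=g_0$ that the paper invokes tacitly.
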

    \begin{proof}
      We will write $s$ instead of $s^{\an}$ to avoid heavy notations.
      Let $k \geq 0$ and $x \in \cM_D^{\an}$. Suppose first that $s^k (x) \in W^-$.
      If $x$ lies above $v \in V[\fin]$, then $h (s^k (x)) = 0$ by Lemma \ref{LemmeFonctionH} and \eqref{EqInequality} is
      obvious. Otherwise we have
      \begin{equation}
      \left|\frac{h(s^k(x))}{g_0 (x)}\right| \leq M_2 M_1
        \label{<+label+>}
      \end{equation}
      and \eqref{EqInequality} is satisfied.

      If $s^k(x) \not \in W^-$, then $x, s^k (x) \in U^- \subset W^+$. If $x$ lies above $V[\fin]$, then by
      Proposition \ref{PropConvergenceSurWmoins},,
      \begin{equation}
        G^-_{|W^+_{V[\fin]}} = g_{(\sX_{V[\fin]}, \sD^-_{V[\fin]})} = g_{0|V[\fin]},
      \end{equation}
      thus
      \begin{equation}
        \left|h(s^k (x))\right| \leq M_0 g_0 (s^k (x)) = \frac{M_0}{\lambda_1^k} g_0 (x).
        \label{<+label+>}
      \end{equation}

      Suppose $x$ does not lie above $V[\fin]$, let $y = s^k (x)$, then
      \begin{equation}
        \left|\frac{h(s^k(x))}{g_0 (x)}\right|= \left|\frac{h(y)}{g_0 (s^{-k} (y))}\right| \leq M_4 \left|\frac{h(y)}{ G^-
        (s^{-k}(y))}\right| = M_4 \left|\frac{h(y)}{\lambda_1^k G^- (y)}\right|.        \label{<+label+>}
      \end{equation}
      Thus,
      \begin{equation}
        \left|\frac{h(s^k(x))}{g_0}\right| \leq \frac{M_4}{\lambda_1^k} \left|\frac{g_0(y)}{G^-(y)}\right|
        \left|\frac{h(y)}{g_0(y)}\right| \leq \frac{M_0 M_3 M_4}{\lambda_1^k}
        \label{<+label+>}
      \end{equation}
    \end{proof}

    \begin{proof}[End of proof of Proposition \ref{PropConvergenceModelDivisorSemiSimple}]
    \begin{enumerate}
      \item If $\mu = \lambda_1$, then $u_n$ converges with respect to the boundary topology because by the claim
        \begin{equation}
          \sup_{\cM_D^{\an}} \left|\frac{h \circ s^k}{g_0} \right|
          \label{<+label+>}
        \end{equation}
        is the term of a converging sum.
      \item If $\left|\mu\right|< \left|\lambda_1\right|$, then $\left|\frac{u_n}{g_0}\right|$ converges uniformly towards $0$ over
        $\cM_D^{\an}$. Indeed,
        \begin{equation}
          \sup_{\cM_D^{\an}} \left| \frac{u_n}{g_0} \right| \leq M \sum_{\ell = 0}^{n-1} \frac{|\mu|^l}{ \lambda_1^{n-1}} \leq
          \frac{M}{\lambda_1^{n-1}} \frac{\left|\mu^n - 1\right|}{ \left|\mu - 1\right|} \xrightarrow[n \rightarrow +\infty]{} 0
          \label{<+label+>}
        \end{equation}
        if $\mu \neq \pm 1$ and otherwise
        \begin{equation}
          \sup_{\cM_D^{\an}} \left| \frac{u_n}{g_0} \right| \leq \frac{M n}{\lambda_1^{n-1}} \xrightarrow[n \rightarrow
          +\infty]{} 0.
          \label{<+label+>}
        \end{equation}
    \end{enumerate}
  \end{proof}

  \subsection{Proof of Theorem \ref{ThmInvariantAdelicDivisor}}\label{subsec:proof-thm-invariant-adelic-divisor}
  \begin{prop}\label{PropConvergencePourS}
    For any cyclic compactification $X$ of $\cM_D$ that satisfies Propostion \ref{PropDynamicsLoxodromicAutomorphism},
    let $s = f^{N_0}$ be an iterate that contracts $X \setminus \cM_D$. For any divisor $D \in \DivInf(X)_\R$ and any
    model adelic extension $\overline \sD$ of
    $D$, we have
    \begin{equation}
      \frac{1}{\lambda_1^N} (s^N)^* \overline \sD \rightarrow (D \cdot \theta^-) \overline
      \theta^+ (X).
      \label{EqConvergence2}
    \end{equation}
  \end{prop}
  \begin{proof} Using Proposition \ref{PropOperateurPullBack} (4) we write
    \begin{equation}
      D = \alpha \theta_{X}^+ + \beta D^- + R
      \label{<+label+>}
    \end{equation}
    where $E_+, F_+ \not \in \Supp R$ with the notations of Proposition \ref{PropOperateurPullBack}. Intersecting with
    $\theta^-$ and using Proposition \ref{PropOperateurPullBack}
    (1) and (3) we get $\alpha = D \cdot \theta^-$. For any model adelic extension $\overline \sD$ of $D$ we get by
    Proposition \ref{PropConvergenceModelDivisorSemiSimple} and Lemma \ref{LemmeConvergenceModelVerticalDivisor} that
    \begin{equation}
      \frac{1}{\lambda_1^N} (s^N)^* \overline \sD \rightarrow (D \cdot \theta^-) \overline \theta^+ (X).
      \label{<+label+>}
    \end{equation}

  \end{proof}

  \begin{prop}\label{prop:Convergence-avec-hypothese}
    Let $X$ be any cyclic completion of $\cM_D$ and let $D \in \DivInf(X)_\R$ such that $D \cdot \theta^- = 0$. For any
    model adelic extension $\overline \sD$ of $D$ we have
    \begin{equation}
      \frac{1}{\lambda_1^N} (f^N)^* \overline \sD \xrightarrow[N \rightarrow + \infty]{} 0.
      \label{eq:<+label+>}
    \end{equation}
  \end{prop}
  \begin{proof}
    We can assume up to blowing up that $X$ satisfies Proposition \ref{PropDynamicsLoxodromicAutomorphism}. Let
    $s = f^{N_0}$ be an iterate of $f$ that contracts $X \setminus \cM_D$. We have by Proposition
    \ref{PropConvergencePourS} that
    \begin{equation}
      \frac{1}{\lambda_1^{N_0 k}} (s^k)^* \overline \sD \xrightarrow[k \rightarrow + \infty]{} 0.
      \label{eq:<+label+>}
    \end{equation}
    This means that if $\overline \sD_0$ is a boundary divisor, there exists a sequence $\epsilon_k \rightarrow 0$ such
    that
    \begin{equation}
      - \epsilon_k \overline \sD_0 \leq \frac{1}{\lambda_1^{N_0 k}} (s^k)^* \overline \sD \leq \epsilon_k \overline \sD_0.
      \label{eq:convergence-Cauchy}
    \end{equation}
    We can assume without loss of generality that $- \overline \sD_0 \leq \overline \sD \leq \overline \sD_0$.
    Now, for every $N \geq 0$ write the Euclidian division of $N$ by $N_0, N = a_n N_0 + b_n$ and let $C > 0$ be a
    constant such that for every $b = 0, \dots, N_0 - 1$
    \begin{equation}
      0 \leq \frac{1}{\lambda_1^b} (f^b)^* \sD_0 \leq C \overline \sD_0.
      \label{eq:estimate-initial-term}
    \end{equation}
    We have
    \begin{equation}
      \frac{1}{\lambda_1^N} (f^N)^* \overline \sD = \frac{1}{\lambda_1^{N_0 a_N}} s^{a_N} \left(
      \frac{1}{\lambda_1^{b_N}} (f^{b_N})^* \overline \sD \right).
      \label{eq:<+label+>}
    \end{equation}
    Now since $f^*$ preserves effectiveness we have by \eqref{eq:convergence-Cauchy} and
    \eqref{eq:estimate-initial-term} that
    \begin{equation}
      -C \epsilon_{a_N} \overline \sD_0 \leq \frac{1}{\lambda_1^N} (f^N)^* \overline \sD \leq C \epsilon_{a_N} \overline
      \sD_0.
      \label{eq:<+label+>}
    \end{equation}
  \end{proof}

  \begin{cor}\label{cor:convergence-tout-le-monde-un-modele}
    Let $X$ be a cyclic completion of $\cM_D$. If $D \in \DivInf(X)_\R$ and $\overline \sD$ is a model adelic extension
    of $D$, then
    \begin{equation}
      \frac{1}{\lambda_1^N} (f^N)^* \overline \sD \xrightarrow[N \rightarrow + \infty]{} \left( D \cdot \theta^- \right)
      \overline \theta^+ (X).
      \label{eq:<+label+>}
    \end{equation}
  \end{cor}
  \begin{proof}
    We can assume that $X$ satisfies Proposition \ref{PropDynamicsLoxodromicAutomorphism}. Let $s = f^{N_0}$ be an
    iterate that contracts $X \setminus \cM_D$. We have by Proposition \ref{PropConvergencePourS} that
    \begin{equation}
      \frac{1}{\lambda_1^{N_0 k}} (s^k)^* \overline \sD \rightarrow (D \cdot \theta^-) \overline \theta^+ (X).
      \label{eq:convergence-iterate}
    \end{equation}
    Write the Euclidian division $N = a_N N_0 + b_n$, we have
    \begin{equation}
      \frac{1}{\lambda_1^N} (f^N)^* \overline \sD = \frac{1}{\lambda_1^{N_0 a_N}} (s^{a_N})^* \left(
      \frac{1}{\lambda_1^{b_N}} (f^{b_N})^* \overline \sD \right).
      \label{eq:convergence-Euclidian-division}
    \end{equation}
    Notice that $\frac{1}{\lambda_1^b} (f^b)^* D \cdot \theta^- = D \cdot \theta^-$. For $b = 0, \dots, N_0 - 1$, define
    \begin{equation}
      \overline \sD_b = \overline \sD - \frac{1}{\lambda_1^b} (f^b)^* \overline \sD.
      \label{eq:<+label+>}
    \end{equation}
    This is a model adelic divisor that satisfies $D_b \cdot \theta^- = 0$ , by Proposition
    \ref{prop:Convergence-avec-hypothese} we have
    \begin{equation}
      \frac{1}{\lambda_1^k} (s^k)^* \overline \sD_b \rightarrow 0.
      \label{eq:<+label+>}
    \end{equation}
    Thus, for every $b = 0 ,\dots, N_0 -1$ we have by \eqref{eq:convergence-iterate}
    \begin{equation}
      \frac{1}{\lambda_1^{N_0 k}} (s^k)^* \left( \frac{1}{\lambda_1^b} (f^b)^* \overline \sD \right) \rightarrow
      (D \cdot \theta^-) \overline \theta^+ (X)
      \label{eq:<+label+>}
    \end{equation}
    and this combined with \eqref{eq:convergence-Euclidian-division} shows the result.
  \end{proof}

  We now finish the proof of Theorem \ref{ThmInvariantAdelicDivisor} which follows from the following theorem.
  \begin{thm}\label{thm:invariant-adelic-divisor-and-convergence}
    The adelic divisor $\overline \theta^+ (X) =: \overline \theta^+$ does not depend on $X$. It is a strongly nef and
    effective adelic divisor and for any adelic divisor $\overline D \in \hatDivInf(\cM_D / \OO_\K)$ we have
    \begin{equation}
      \frac{1}{\lambda_1^N} (f^N)^* \overline D \rightarrow (D \cdot \theta^-) \overline \theta^+.
      \label{eq:<+label+>}
    \end{equation}
  \end{thm}

  \begin{proof}
    We show that the adelic divisor $\overline \theta^+ (X)$ defined in Corollary
    \ref{cor:convergence-tout-le-monde-un-modele} does not depend on the completion $X$. It suffices to show that
    $\overline \theta^+
  (X) = \overline \theta^+ (Y)$ for any compactification $Y$ above $X$. Let $D = \theta_{X}^+ \in \DivInf (X)$, then $\pi^* D
  \in \DivInf(Y)$ and satisfies $\pi^* D \cdot \theta_Y^- = \theta_{X}^+ \cdot \theta_{X}^- = \theta^+ \cdot \theta^- =
  1$. Thus we get by Corollary \ref{cor:convergence-tout-le-monde-un-modele} that $\overline \theta^+ (X) = \overline
  \theta^+ (Y)$.

  We now show that $\overline \theta^+$ is strongly nef and effective. Let $H$ be the ample divisor on $\overline \cM_D
  \subset \P^3$ defined by $H = \left\{ X = T = 0 \right\} + \left\{ Y =
  T = 0 \right\} + \left\{ Z = T = 0 \right\}$. Let $\overline H$ be a semipositive effective model of $H$ which
  exists by Lemma \ref{lemme:effective-semipositive-model}. Since $H$ is ample
  we have $H \cdot \theta^- > 0$. By Proposition \ref{PropFunctoriality} and Corollary
  \ref{cor:convergence-tout-le-monde-un-modele} we have that $\overline \theta^+$ is strongly nef and effective.

  Finally, let $\overline \sD \in \hatDivInf (\cM_D / \OO_\K)$ be an adelic divisor. For every $\epsilon > 0$, there
  exists a model adelic divisor $\sD_\epsilon$ such that
  \begin{equation}
    - \epsilon \overline \sD_0 \leq \overline D - \overline \sD_\epsilon \leq \epsilon \overline \sD_0.
    \label{eq:<+label+>}
  \end{equation}
  Since $D_\epsilon \cdot \theta^- \rightarrow D \cdot \theta^-$ and $f^*$ preserves effectiveness we have
  \begin{equation}
    \frac{1}{\lambda_1^N} (f^N)^* \overline D \rightarrow (D \cdot \theta^-) \overline \theta^+.
    \label{eq:<+label+>}
  \end{equation}
  \end{proof}

\begin{rmq}\label{rmq:semipositive-effective-canonical-model}
  Let $X$ be the compactification $\overline \cM_D \subset \P^3$ and $H$ the ample divisor $H = \left\{ X = T = 0
  \right\} + \left\{ Y = T = 0 \right\} + \left\{ Z = T = 0 \right\}$. Write $D = A / B$ where $A,B \in \OO_\K$, we have
  a natural model $(\sX, \sH)$ of $(X,H)$ given by
  \begin{align}
    \sX &= \Proj \OO_K [X,Y,Z,T] / \left( B T \left( X^2 + Y^2 + Z^2 \right) - B XYZ - A T^3  \right)\\
    \sH &=  \left\{ X = T = 0 \right\}_\sX + \left\{ Y = T = 0 \right\}_\sX + \left\{ Z = T = 0 \right\}_\sX
  \end{align}
  Applying Corollary \ref{cor:convergence-tout-le-monde-un-modele} with $\sH$ yields the definitions of the Green
  functions from the introduction.
\end{rmq}

\begin{prop}\label{PropPropertiesGreenFunctions}
  Let $G^+$ be the Green function of $\overline \theta^+$, then
  \begin{enumerate}
    \item \label{item:positivity} $G^+ \geq 0$.
    \item \label{item:invariance} $G^+ \circ f^{\an} = \lambda_1 G^+$.
    \item \label{item:boundedness--condition} $G^+ (x) = 0$ if and only if the forward $f^{\an}$-orbit of $x$ is bounded.
    \item \label{item:Green-function-extension} If $X$ is a compactification of $\cM_D$, then
      for any Green function $g$ of $\theta_{f,X}^+, G^+ - g$ extends to a
      continuous function over $X_v^{\an} \setminus \left\{ p_- \right\}$.
    \item \label{item:psh} If $v$ is archimedean, then $G^+_v$ is plurisubharmonic and pluriharmonic over the set
      $\left\{ G^+_v > 0 \right\}$.
  \end{enumerate}
\end{prop}
\begin{proof}
  \ref{item:positivity} follows from $\overline \theta^+$ being effective.
  \ref{item:invariance} follows from $f^* \overline \theta^+ = \lambda_1 \theta^+$.

  If $X$ is any compactification of $\cM_D$, we can suppose that it satisfies Proposition
  \ref{PropDynamicsLoxodromicAutomorphism} up to blowing up.
  Let $(\sX, \sD)$ be a model of $(X, \theta_{f,X}^+)$ for a compactification $X$ that satisfies Proposition
  \ref{PropDynamicsLoxodromicAutomorphism}.
  Fix a place $v$ of $\K$, let $U^+$ be the open neighborhood of $p_+$ in $X^{\an}$ constructed in \S
  \ref{SubSecConvAwayFromPminus}. Since $G^+ - g_{(\sX, \sD)}$ extends to a continuous function over $U^+$ and
  $\theta_{f,X}^+$ is effective, we can shrink $U^+_v$ such that $G^+_{U_v^+} > 0$.

  By \ref{item:invariance}, we have that $G^+ > 0$ over
\begin{equation}
  U_v := \bigcup_{n \geq 0} (f^{-n})^{\an} (U^+_v \cap \cM_D^{\an})
  \label{<+label+>}
\end{equation}
To show \ref{item:boundedness--condition}, let $x \in \cM_D^{\an}$ and let $v$ be the place over which $x$ lies. It
suffices to show that if the
forward orbit of $x \in (\cM_D)_v^{\an}$ is unbounded, then there exists $N_0$ such that
$(f^{N_0})^{\an}(x) \in U^+_v$.  Since $X_v^{\an}$ is compact, the sequence
$((f^n)^{\an}(x))$ must have an accumulation point $q \in X_v^{\an} \setminus (\cM_D)_v^{\an}$.

If $q \neq p_-^v$, then
since an iterate of $f : X \setminus \left\{ p_-^v \right\} \rightarrow X \setminus \{p_-^v\}$ contracts the complement of
$\cM_D$ to $p_+^v$ we
must have $(f^k)^{\an} (q) = p_+^v$ for some $k \geq 1$ thus by continuity there exists $N_0$ such that $(f^{N_0})^{\an}
(x) \in U^+_v$.

Otherwise $(f^n)^{\an} (x) \rightarrow p_-^v$.
Since $p_-^v$ is an attracting fixed point for $(f^{-1})^{\an}$, there exists a basis of neighbourhood $U^k_v$ of
$p_-^v$ in $X^{\an}_v$ such that for all $N$ large enough, $(f^{-N})^{\an} (U^k_v) \Subset U^k_v$ and we would get that
$x \in U^k_v$ for all $k \geq 0$, this is absurd.

To show \ref{item:psh}, let $H$ be the ample divisor on $\overline \cM_D \subset \P^3$ defined by
$\left\{ X = T = 0 \right\} +
\left\{ Y= T = 0 \right\} + \left\{ Z = T = 0 \right\}$ and let $\overline H$ be the semipostive effective model of
$H$ defined in Remark \ref{rmq:semipositive-effective-canonical-model} and $g_H$ the associated Green function. Suppose
$v$ is archimedean. Since $g_H$ is plurisubharmonic over $\cM_D
(\C), \frac{1}{\lambda_1^n} g_H \circ f^n$ also is. By local uniform
convergence, we get that $G^+$ is plurisubharmonic over $\cM_D (\C)$. To show the pluriharmonicity, it suffices to show
that $G^+_{|U^+_v \bigcap \cM_D(\C)}$ is pluriharmonic by the proof of (3). We can suppose that $g_H$ is pluriharmonic
over $U^+_v \bigcap \cM_D (\C)$ up to shrinking $U_v^+$, then since $U^+_v$ is $f^{\an}$-invariant,
$\frac{1}{\lambda_1^n} g_H \circ (f^{\an})^n$ is also pluriharmonic over $U^+_v \cap \cM_D (\C)$ and
$G^+_{|U^+_v \cap \cM_D (\C)}$ is pluriharmonic as the local uniform limit of pluriharmonic functions.

To show \ref{item:Green-function-extension}, Let $v$ be any place of $\K$, let $q \in X^{\an}_v \setminus
(\cM_D)_v^{\an}$ such that $q \neq p_-$. From
Proposition
\ref{PropDynamicsLoxodromicAutomorphism}, we can find an open neighbourhood $U^-_v$ of $p_-$ such that $q \not \in
U_v^-$ and $(f^{-1})^{\an} (U_v^-) \Subset U_v^-$. The proof of \S \ref{SubSecConvAwayFromPminus} shows that if $g_+$ is
a model Green function of $\theta_{f,X}^+$, then
\begin{equation}
  \frac{1}{\lambda_1^n} g_+ \circ (f^n)^{\an} - g_+
  \label{<+label+>}
\end{equation}
converges uniformly over $W_v^- = X^{\an}_v \setminus U_v^-$ to a continuous function which is equal to $G^+ - g_+$.
Since two Green functions of the same divisor differ by a continuous function we get the result for any Green function
of $\theta_{f,X}^+$.
\end{proof}

\begin{rmq}
  In \cite{chambert-loirFormesDifferentiellesReelles2012}, Chambert-Loir and Ducros developed a theory of plurisubharmonic
  functions, currents and differential forms on Berkovich spaces. Following their definitions, \ref{item:psh} also holds for
non-archimedean places with the same proof.
\end{rmq}

\section{Periodic points and equilibrium measure}\label{SecEquidistributionPeriodicPoints}

\subsection{Equidistribution}
Let $(x_n)$ be a sequence of $X (\overline K) \subset X (\overline \K_v)$ and let $\mu_v$ be a
measure on $X_v^{\an}$. We say that the Galois orbit of $(x_n)$ is equidistributed with
respect to $\mu_v$ if the sequence of measures
\begin{equation}
  \delta(x_n) := \frac{1}{\deg(x_n)} \sum_{x \in \Gal(\overline \K / \K) \cdot x_n} \delta_x
  \label{<+label+>}
\end{equation}
weakly converges towards $\mu_v$, where $\delta_x$ is the Dirac measure at $x$.

We say that a sequence of points $(x_n)$ of $X (\overline \K)$ is \emph{generic} if no subsequence
of $(x_n)$ is contained in a strict subvariety of $X$. In particular, a generic sequence is
Zariski dense.

\begin{lemme}\label{LemmeGenericSequence}
  Let $X$ be a projective variety over a number field $\K$ and let $(x_n)$ be a Zariski dense
  sequence of $X(\overline \K)$, then one can extract a generic subsequence of $(x_n)$.
\end{lemme}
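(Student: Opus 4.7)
The plan is to exploit the countability of $\overline{\K}$, which implies that there are only countably many closed subvarieties of $X$ defined over $\overline{\K}$ (each such subvariety is cut out by finitely many polynomials whose coefficients lie in the countable field $\overline{\K}$). This reduces what is a priori an uncountable selection problem to a diagonal extraction.

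First I note that the existence of a generic subsequence forces $X$ to be irreducible: any irreducible component of a reducible $X$ would be a strict subvariety accommodating infinitely many terms of any sequence in $X$, precluding genericity. I therefore assume $X$ is irreducible, and I enumerate the (countable or finite) family of irreducible proper closed $\overline{\K}$-subvarieties $Z \subsetneq X$ that contain infinitely many terms of $(x_n)$ as $Z_1, Z_2, \ldots$.

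Next, for each $k$ I set $W_k := Z_1 \cup \cdots \cup Z_k$. Since $X$ is irreducible and each $Z_i$ is a proper closed subvariety, $W_k$ is itself a proper closed subset of $X$. By Zariski density of $(x_n)$, infinitely many terms must lie outside $W_k$, for otherwise the Zariski closure of the sequence would be contained in $W_k \subsetneq X$. I now perform a diagonal extraction, choosing indices $n_1 < n_2 < \cdots$ with $x_{n_k} \notin W_k$.

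Finally, I verify that $(x_{n_k})$ is generic. Suppose a sub-subsequence $(x_{n_{k_j}})_j$ lies in a strict subvariety $Y \subsetneq X$. Since $Y$ has finitely many irreducible components, one of them, say $Y'$, contains infinitely many $x_{n_{k_j}}$, hence infinitely many $x_n$; so $Y' = Z_i$ for some $i$. But by construction $x_{n_k} \notin Z_i$ for all $k \geq i$, contradicting the fact that $Z_i$ contains infinitely many of the $x_{n_{k_j}}$. The main subtlety is the countability of the family $\{Z_k\}$, which rests entirely on the countability of $\overline{\K}$; over an uncountable base field like $\mathbf{C}$ this direct diagonal extraction would break down and a substantially different argument would be required.
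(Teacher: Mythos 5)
Your proof is essentially the same as the paper's: both hinge on the countability of strict closed $\overline{\K}$-subvarieties of $X$ (because $\K$ is a number field), enumerate them, form the increasing unions, and perform a diagonal extraction using Zariski density. The one point where you are more careful than the paper is in flagging that $X$ must be irreducible for the union $Z_1 \cup \cdots \cup Z_k$ to remain a proper subvariety (and indeed for a generic subsequence to exist at all); the paper uses this implicitly without comment. Your restriction to subvarieties meeting the sequence infinitely often is a harmless simplification, and your closing verification matches what the paper leaves implicit.
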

\begin{proof}
  The set of strict irreducible subvarieties of $X$ is countable because $\K$ is a number field.
  Let $(Y_q)_{q \in \N}$ be the set of strict irreducible subvarieties of $X$. We construct a
  generic subsequence $(x_q')_{q \in \N}$ as follows. Set $Y'_q = \bigcup_{k \leq q} Y_k$. This is a
  strict subvariety of $X$. Let $n(1)$ be such that
  $x_{n(1)} \not \in Y_1 = Y_1'$ and suppose we have constructed $n(1) < \dots < n(q)$ such that $x_{n(i)} \not \in
  Y_i'$. Since $(x_n)$ is Zariski dense, there exists an integer $n(q+1) > n (q)$ such
  that $x_{n(q)} \not \in Y_q'$. This defines an increasing sequence $n(q)$ and we set $x_q ' = x_{n(q)}$, The sequence
  $(x_q ')$ is a subsequence of $(x_n)$ which is clearly generic.
\end{proof}

We will use the following arithmetic equidistribution theorem from Yuan and Zhang.
\begin{thm}[\cite{yuanAdelicLineBundles2023} Theorem 5.4.3]\label{ThmEquidistributionYuanZhang}
  Let $X$ be a quasiprojective variety over a number field $\K$ and let $\overline D$ be a
  nef adelic divisor over $X$ such that $D^{\dim X} > 0$. Let $(x_n) \in X
  (\overline \K)$ be a generic sequence such that $\lim_n h_{\overline D} (x_n) \rightarrow
  h_{\overline D} (X)$, then at every place $v$ the Galois orbit of the sequence $(x_n)$ is
  equidistributed with respect to the equilibrium measure $\mu_{\overline D, v}$ over $X^{\an}_v$.
\end{thm}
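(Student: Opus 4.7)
This is the arithmetic equidistribution theorem of Szpiro--Ullmo--Zhang and Yuan extended to quasiprojective varieties. The plan is to adapt Yuan's variational method to this setting using the Yuan--Zhang adelic divisor framework. Fix a place $v$ of $\K$ and a continuous, compactly supported real function $\phi : X_v^{\an} \to \R$. Attach to $\phi$ the integrable adelic divisor $\overline{O(\phi)}$ whose underlying divisor is zero and whose Green function equals $\phi$ at $v$ and $0$ at all other places. For small $t \in \R$ form the perturbation $\overline D_t := \overline D + t\, \overline{O(\phi)}$.

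First, by multilinearity of arithmetic intersection together with the definition of $\mu_{\overline D, v}$ as the top wedge product of Chambert-Loir curvature currents attached to $\overline D$, one derives the first-order expansion
\[
h_{\overline D_t}(X) = \frac{\overline D_t^{\dim X + 1}}{(\dim X + 1)\, D^{\dim X}} = h_{\overline D}(X) + t \int_{X_v^{\an}} \phi\, d\mu_{\overline D, v} + O(t^2),
\]
in which $D^{\dim X}$ is unchanged since the underlying divisor of $\overline{O(\phi)}$ is trivial.

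Second, invoke Zhang's inequality on essential minima in this context: for $t$ sufficiently small $\overline D_t$ remains nef (after approximation), so the essential minimum satisfies $e_1(\overline D_t) \geq h_{\overline D_t}(X)$. Applying this to the generic sequence $(x_n)$ and using $h_{\overline D_t}(x_n) = h_{\overline D}(x_n) + t \int \phi\, d\delta(x_n)$ together with the hypothesis $h_{\overline D}(x_n) \to h_{\overline D}(X)$, we obtain
\[
\liminf_n t \int \phi\, d\delta(x_n) \geq t \int \phi\, d\mu_{\overline D, v} + O(t^2).
\]
Dividing by $t > 0$ and letting $t \to 0^+$, then repeating with $t < 0$, forces $\int \phi\, d\delta(x_n) \to \int \phi\, d\mu_{\overline D, v}$ for every such $\phi$, which is the desired weak convergence.

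The hard part is the second step: establishing the essential minimum inequality in the quasiprojective setting. It requires showing that $\overline D_t$ can be approximated, uniformly in small $t$ and in the boundary topology, by semipositive model adelic divisors on projective completions of $X$, together with extending the differentiability of the arithmetic volume function of Yuan to the Yuan--Zhang framework. Once the perturbation $\overline D + t \overline{O(\phi)}$ is controlled in this sense and Zhang's inequality is available, the rest of the argument is formally identical to Yuan's original proof in the projective case.
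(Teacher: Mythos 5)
This theorem is not proved in the paper at all: it is stated as a direct citation of Theorem 5.4.3 of \cite{yuanAdelicLineBundles2023}, and the paper uses it as a black box. So there is no internal argument in the paper against which your proposal can be compared.

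Your sketch correctly identifies the broad shape of the argument that Yuan and Zhang (following Szpiro--Ullmo--Zhang and Yuan's variational method) actually use: perturb the adelic divisor by an integrable adelic divisor $\overline{O(\phi)}$ with trivial underlying divisor, compute the first-order variation of $h_{\overline{D}_t}(X)$ via multilinearity of arithmetic intersection numbers and the definition of the Chambert-Loir measure, and combine this with an essential-minimum inequality $e_1(\overline{D}_t) \geq h_{\overline{D}_t}(X)$ applied along the generic sequence. However, as you yourself flag, the entire substance of the result lies in the ``hard part'' you defer: (i) Zhang's essential-minimum inequality in the quasiprojective adelic framework is itself Theorem 5.3.3 of \cite{yuanAdelicLineBundles2023} and rests on the machinery of adelic line bundles, arithmetic volumes and the boundary topology developed throughout that paper; (ii) the differentiability of the arithmetic volume function and the arithmetic bigness of the perturbed divisor require nontrivial extensions of Yuan's projective results to the inverse/direct limit setting of model adelic divisors; (iii) one must also check that the perturbation $\overline{D}+t\,\overline{O(\phi)}$ remains in the nef/integrable cone in the precise sense of the boundary topology, which is not automatic when $\phi$ is merely compactly supported. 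In short, what you have written is an accurate thumbnail of the strategy of the cited theorem, but it is not a proof; and since the paper treats the theorem as an external black box, there is no gap in the paper to address --- the gap is only in the proposal, which reduces the statement to the deepest technical results of \cite{yuanAdelicLineBundles2023} without supplying them.
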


\subsection{Equidistribution of periodic points} Suppose $D \in \C$ is algebraic.
Let $f \in \Aut(\cM_D)$ be loxodromic. Let $\overline
\theta_f^+, \overline \theta_f^-$ be the two strongly nef adelic divisors provided by Theorem
\ref{ThmInvariantAdelicDivisor} for $f$ and $f^{-1}$. Recall that $\theta_f^+ \cdot \theta_f^- =1$. Set
\begin{equation}
  \overline \theta_f = \frac{\overline \theta_f^+ + \overline \theta_f^-}{2}.
\end{equation}
It is a strongly nef adelic divisor over $\cM_D$ and satisfies $\theta_f^2 = 1$. For every
place $v$, we write $\mu_{f,v}$ for the equilibrium measure of $\overline \theta_f$. We also write
$h_f := h_{\overline \theta_f}$ and $G_f = \frac{G^+_f + G^-_f}{2}$ for the Green function of $\overline \theta_f$.

\begin{thm}\label{ThmYuan}
  If $(p_n)$ is a generic sequence of $\cM_D (\overline \K)$ of periodic points of $f$, then for every
  place $v$ of $\K$ the Galois orbit of $(p_n)$ is equidistributed with respect to the measure $\mu_{f, v}$ over
  $\cM_{D,v}^{\an}$.
\end{thm}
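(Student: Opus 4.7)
The plan is to apply the Yuan--Zhang equidistribution theorem (Theorem~\ref{ThmEquidistributionYuanZhang}) to the adelic divisor $\overline{\theta} = (\overline{\theta^+} + \overline{\theta^-})/2$. Three hypotheses must be checked: that $\overline{\theta}$ is nef, that its underlying class satisfies $\theta^2 > 0$, and that the heights $h_{\overline{\theta}}(p_n)$ converge to $h_{\overline{\theta}}(\cM_D)$. The first is immediate since $\overline{\theta^\pm}$ are strongly nef by Theorem~\ref{ThmInvariantAdelicDivisor}. The positivity of $\theta^2$ follows from Theorem~\ref{ThmDynamicAutomorphismPicardManin}: we have $(\theta^+)^2 = (\theta^-)^2 = 0$ and $\theta^+ \cdot \theta^- = 1$, so $\theta^2 > 0$. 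The sequence $(p_n)$ is generic by assumption, so all that remains is the height condition.

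To compute $h_{\overline{\theta}}(p_n)$, observe that strong nefness of $\overline{\theta^\pm}$ implies $h_{\overline{\theta^\pm}} \geq 0$ on $\cM_D(\overline{\K})$. The functoriality of the pullback on adelic divisors (Proposition~\ref{PropFunctoriality}) applied to the relation $f^* \overline{\theta^+} = \lambda_1 \overline{\theta^+}$ gives, for any $p \in \cM_D(\overline{\K})$,
\begin{equation}
    h_{\overline{\theta^+}}(f(p)) = \lambda_1\, h_{\overline{\theta^+}}(p).
\end{equation}
If $p$ is periodic of period $N$, iterating yields $h_{\overline{\theta^+}}(p) = \lambda_1^N h_{\overline{\theta^+}}(p)$, and since $\lambda_1 > 1$ and $h_{\overline{\theta^+}}(p) \geq 0$, we conclude $h_{\overline{\theta^+}}(p) = 0$. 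The symmetric argument applied to $f^{-1}$ and $\overline{\theta^-}$ gives $h_{\overline{\theta^-}}(p) = 0$, so $h_{\overline{\theta}}(p_n) = 0$ for every $n$.

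It then suffices to show $h_{\overline{\theta}}(\cM_D) = 0$. By definition this is $\overline{\theta}^3/(3\,\theta^2)$, where $\overline{\theta}^3$ denotes the triple arithmetic intersection of adelic divisors. Expanding $\overline{\theta}^3$ and using $f^*\overline{\theta^+} = \lambda_1 \overline{\theta^+}$ and $f^*\overline{\theta^-} = \lambda_1^{-1}\overline{\theta^-}$, together with the $f^*$-invariance of arithmetic intersection numbers (which follows from the projection formula in the Yuan--Zhang framework), each of the four terms $(\overline{\theta^+})^3$, $(\overline{\theta^+})^2 \cdot \overline{\theta^-}$, $\overline{\theta^+} \cdot (\overline{\theta^-})^2$, $(\overline{\theta^-})^3$ is multiplied under $f^*$ by a factor $\lambda_1^3, \lambda_1, \lambda_1^{-1}, \lambda_1^{-3}$ respectively, none equal to $1$; hence all four vanish and $\overline{\theta}^3 = 0$. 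Combined with the previous paragraph, $h_{\overline{\theta}}(p_n) = 0 = h_{\overline{\theta}}(\cM_D)$ for all $n$, so Theorem~\ref{ThmEquidistributionYuanZhang} applies and gives equidistribution of the Galois orbits at every place $v$ with respect to $\mu_{f,v}$.

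The main obstacle is purely verificatory: one must confirm that the three ingredients of the Yuan--Zhang theorem (invariance of arithmetic intersection numbers under the pullback $f^*$ on integrable adelic divisors over the quasiprojective variety $\cM_D$, nonnegativity of heights of strongly nef adelic divisors, and the projection formula on $\hat{\Div}(\cM_D/\OO_\K)$) transfer from the classical projective setting; all three are established in \cite{yuanAdelicLineBundles2023} and recalled in Section~\ref{SectionAdelicDivisor}. No genuinely new argument is required beyond organizing these results.
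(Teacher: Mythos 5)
Your proof is correct and it reaches the same conclusion as the paper's, but it replaces one key step with a genuinely different argument. The paper shows $h_{\overline{\theta}}(\cM_D)=0$ by appealing to Zhang's inequality on the essential minimum (Theorem~5.3.3 of \cite{yuanAdelicLineBundles2023}): the existence of a generic sequence of height-zero points forces the essential minimum $e(\cM_D,\overline{\theta})$ to vanish, and since $e \geq h_{\overline{\theta}}(\cM_D) \geq 0$, the height of $\cM_D$ vanishes. You instead compute $\overline{\theta}^3$ directly by expanding into $(\overline{\theta^+})^a\cdot(\overline{\theta^-})^b$ with $a+b=3$ and using invariance of arithmetic intersection numbers under $f^*$ to show every term is scaled by $\lambda_1^{a-b}\neq 1$, hence zero. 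Your route is more self-contained (it bypasses the essential-minimum inequality, a nontrivial theorem) but it does require the projection-formula invariance of the arithmetic triple product for automorphisms in the Yuan--Zhang quasiprojective framework; both versions import one nontrivial fact from \cite{yuanAdelicLineBundles2023}. You also make explicit what the paper leaves implicit, namely that $h_{\overline{\theta^\pm}}(p)=0$ for periodic $p$, via $h_{\overline{\theta^+}}\circ f = \lambda_1 h_{\overline{\theta^+}}$ and nonnegativity.

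One small imprecision: you attribute the nonnegativity of $h_{\overline{\theta^\pm}}$ on points to strong nefness. In fact, what gives this directly from the height formula quoted in the paper (for $D|_U=0$) is the \emph{effectivity} of $\overline{\theta^\pm}$, i.e. the pointwise nonnegativity $G^\pm\geq 0$ established in Theorem~\ref{ThmInvariantAdelicDivisor} and Proposition~\ref{PropPropertiesGreenFunctions}(1). Strong nefness is a curvature condition and does not by itself force the Green function to be nonnegative. The conclusion you need is still true for the right reason; just cite effectivity rather than nefness at that step.
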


\begin{proof}
  We apply Yuan-Zhang's equidistribution theorem to the adelic divisor
  $\overline \theta$. We need to show that the sequence
  $h_{f}(p_n)$ converges to $h_{f} (\cM_D)$. Since the points $p_n$ are periodic, we have for all $v$, $G_v (p_n^v) = 0$ by
  Proposition \ref{PropPropertiesGreenFunctions} (3). Thus we need to show that $h_{f} (\cM_D) = 0$. To
  do so we apply Theorem 5.3.3 of \cite{yuanAdelicLineBundles2023}. Namely, let
  \begin{equation}
    e(\cM_D, \overline \theta_f) := \sup_{U \subset \cM_D} \inf_{p \in U} h_{f} (p)
    \label{<+label+>}
  \end{equation}
  where $U$ runs through open subsets of $\cM_D$. This quantity is called the \emph{essential minimum} of $\overline
  \theta$. Since we have a generic sequence of periodic points, we get $e (\cM_D, \overline \theta_f) = 0$. Theorem 5.3.3 of
  \cite{yuanAdelicLineBundles2023} states that
  \begin{equation}
    e (\cM_D, \overline \theta_f) \geq h_f (X).
    \label{<+label+>}
  \end{equation}
  Therefore we get $h_f (\cM_D) = 0$ and Yuan's equidistribution theorem gives the desired result.
\end{proof}

\begin{cor}
  If $D$ is algebraic and $f,g$ are two loxodromic automorphisms of $\cM_D$ such that $\Per(f) \cap
  \Per(g)$ is Zariski dense, then for every place $v$ of $\Q(D)$,
  \begin{equation}
    \mu_{f,v} = \mu_{g,v}
    \label{<+label+>}
  \end{equation}
\end{cor}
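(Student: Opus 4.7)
The plan is to exploit the arithmetic equidistribution theorem (Theorem \ref{ThmYuan}) simultaneously for both automorphisms applied to a well chosen sequence of common periodic points. The three inputs are: (i) the Zariski density of $\Per(f)\cap\Per(g)$, (ii) the fact that points periodic under $f$ have zero $f$-canonical height (so the essential-minimum hypothesis is satisfied), and (iii) the uniqueness of the weak limit of a convergent sequence of probability measures.

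First, I would enumerate a sequence $(p_n)$ in $\Per(f)\cap\Per(g)$ which is Zariski dense in $\cM_D$, using that $\Per(f)\cap\Per(g)$ is Zariski dense by assumption. Applying Lemma \ref{LemmeGenericSequence} (which only requires the base field $\K=\Q(D)$ to be countable, so that the closed subvarieties form a countable family) I extract a subsequence, still denoted $(p_n)$, which is generic in $\cM_D$ in the sense that no infinite subsequence is contained in a proper closed subvariety. Each $p_n$ lies in $\cM_D(\overline\K)$ and is simultaneously periodic for $f$ and for $g$.

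Next I apply Theorem \ref{ThmYuan} with the automorphism $f$ to the generic sequence $(p_n)$. Since each $p_n$ is $f$-periodic, we have $h_f(p_n)=0$ for every $n$, hence trivially $\lim_n h_f(p_n)=0=h_f(\cM_D)$ as shown in the proof of Theorem \ref{ThmYuan}. The theorem then asserts that at every place $v$ of $\K$, the sequence of Galois-orbit measures $\delta(p_n)$ converges weakly on $\cM_{D,v}^{\an}$ to the equilibrium measure $\mu_{f,v}$. Repeating the argument verbatim with $g$ in place of $f$, the same sequence $(p_n)$ (still generic, still satisfying $h_g(p_n)=0$ because $p_n\in\Per(g)$) yields weak convergence of $\delta(p_n)$ to $\mu_{g,v}$.

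Finally, since the weak limit of a sequence of probability measures on the compact Hausdorff space $\cM_{D,v}^{\an}$ is unique when it exists, the two conclusions force
\begin{equation}
  \mu_{f,v}=\mu_{g,v}
\end{equation}
at every place $v$ of $\K=\Q(D)$. No step here is a serious obstacle: the only nontrivial content is hidden in Theorem \ref{ThmYuan} and the adelic machinery developed earlier in the paper; once those are in hand, the corollary is essentially a one-line consequence, the principal bookkeeping being the genericity extraction and the verification that $(p_n)$ is admissible for Yuan--Zhang's theorem with respect to both dynamical systems simultaneously.
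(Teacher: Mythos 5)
Your proof is correct and follows the same approach as the paper: extract a generic sequence from the Zariski dense set $\Per(f)\cap\Per(g)$ via Lemma \ref{LemmeGenericSequence}, apply Theorem \ref{ThmYuan} once for $f$ and once for $g$ to the same sequence, and conclude by uniqueness of weak limits. The only extra material you supply (that $h_f(p_n)=0$ for $f$-periodic points, so the height hypothesis of the equidistribution theorem holds) is exactly what the paper checks inside the proof of Theorem \ref{ThmYuan} itself.
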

\begin{proof}
  Let $(x_n)$ be a Zariski dense sequence of $\Per(f) \cap \Per(g)$. By Lemma
  \ref{LemmeGenericSequence}, we can suppose that $(x_n)$ is generic. By Theorem \ref{ThmYuan}, for
  every place $v$ of $\K$, the Galois orbit of the sequence $(x_n)$ equidistributes with respect to
  both the measures $\mu_{f,v}$ and $\mu_{g,v}$. Thus, they must be equal.
\end{proof}

\subsection{Ergodic properties of the equilibrium measure}\label{subsec:ergodic-prop-equilibrium-measure}
Let $D \in \C$ and consider the embedding $\Q(D) \hookrightarrow \C$. This corresponds to an archimedean place of
$\Q(D)$. Let $X$ be a compactification of $\cM_D$ (over $\C$) that satisfies Proposition \ref{PropDynamicsLoxodromicAutomorphism}. The
measure $\mu_f = dd^c G^+_f \wedge dd^c G^-_f$ that we construct in this paper is exactly the measure associated to the
birational transformation $f : X \dashrightarrow X$ constructed in \cite{bedfordEnergyInvariantMeasures2005}. Indeed,
The quantitative condition (3) in loc.cit is satisfied because the indeterminacy points of $f$ are intersection points
of $X \setminus \cM_D$. In particular the main theorem of
\cite{bedfordEnergyInvariantMeasures2005} implies the following.
\begin{thm}\label{thm:ergodic-properties-equilibrium-measure}
  For every $D \in \C$, for every archimedean place $v$ of $\Q(D)$, the measure $\mu_{f,v}$ is ergodic, mixing and hyperbolic.
\end{thm}

A \emph{saddle} point is a point where the differential of
$f$ does not have any eigenvalue of modulus 1.

\begin{cor}\label{cor:supp-measure-in-closure-saddle-periodic-points}
  For every archimedean place $v$, $\Supp \mu_{f,v}$ is contained in the closure of the saddle periodic points.
\end{cor}
\begin{proof}
  This follows from Theorem 4.2 of \cite{katokLyapunovExponentsEntropy1980} because $\mu_{f,v}$ is ergodic, hyperbolic
  and not supported on a finite orbit.
\end{proof}

\section{Saddle periodic points are in the support of the equilibrium
measure}\label{SubSecSaddlEPeriodicPointsAreINSupport}
Let $D \in \C$ be algebraic. Fix an archimedean place $v$ of $\Q(D)$. For this section we work only over this place so
we will drop the index $v$ and fix an embedding $\Q(D) \hookrightarrow \C$.

\begin{thm}\label{ThmPeriodicPointsInSupportOfMeasure} Let $f$ be a loxodromic automorphism of $\cM_D$. The support of
  the measure $\mu_f$ is the closure of the saddle periodic points of $f$.
\end{thm}

By Corollary \ref{cor:supp-measure-in-closure-saddle-periodic-points}, it suffices to show that every saddle periodic
point is in the support of $\mu_f$.
This theorem, stated in~\cite{cantatDynamiqueAutomorphismesSurfaces2001} and
\cite{cantatBersHenonPainleve2009}, follows directly from the work of Dinh and Sibony
in~\cite{dinhRigidityJuliaSets2013}, which extends \cite{bedfordPolynomialDiffeomorphismsC21991a},
and an argument of \cite{bedfordPolynomialDiffeomorphismsOfC1993} for Hénon type automorphisms of
the complex affine plane. A sketch of proof is given in \cite[\S 3.1 and 3.2]{cantatBersHenonPainleve2009}. We provide a
more detailed proof here.

\subsection{Green functions and bounded orbits} First, let us summarize some of the properties of the
function $G^+_f: \cM_D(\C)\to \R^+$ of $\overline \theta_f^+$ from Proposition \ref{PropPropertiesGreenFunctions}.
The
\emph{stable manifold} of a point $q \in \cM_D (\C)$ is the set of
points $p$ such that
\begin{equation}
  \dist (f^n(q), f^n(p)) \xrightarrow[n \rightarrow + \infty]{} 0.
  \label{<+label+>}
\end{equation}
Fix a compactification $X$ of $\cM_D$ that satisfies Proposition \ref{PropDynamicsLoxodromicAutomorphism}. We have
\begin{itemize}
  \item[(a)] $\{G^+_f=0\}$ coincides with the
    set $K^+(f)$ of points with a bounded forward orbit;
  \item[(b)] $G^+_f$ is plurisubharmonic, and
    is pluriharmonic on the set $\{G^+_f>0\}$;
  \item[(c)] the set $K^+(f)$ is closed in $\cM_D(\C)$,
    its closure in $X(\C)$ coincides with $K^+(f)\cup \{p_-\}$;
  \item[(d)] locally, near
    every point $q\neq p_-$ of $X(\C) \setminus \cM_D (\C)$, \begin{equation} G^+_f(x)=-\sum_i a_i
    \log |z_i| +u(x) \end{equation} where the functions $z_i$ are holomorphic
    equations of the boundary components containing $q$, the real numbers $a_i\geq 0$ are the weight
    of $\theta_{f,X}^+$, and $u(x)$ is a continuous (pluriharmonic) function.
  \item[(e)] there is an
    open neighborhood $U^-$ of $p_-$  in $X(\C)$ such that $f^{-1}(U^-) \Subset U^-$ and $U^-$ is
    contained in the basin of attraction of $p_-$ for the backward dynamics;
    there is an open neighborhood  $U^+$ of $p_+$ with similar properties for $f$ instead of
    $f^{-1}$;
  \item[(f)] If $q$ is a saddle periodic point, its stable manifold $W^s(q)$ is
    contained in $K^+(f)$; in fact, the proof of Proposition 5.1
    in~\cite{bedfordPolynomialDiffeomorphismsC21991} shows that $W^s(q)$ is
    contained in the boundary of $K^+(f)$;
  \item[(g)] $f$ does not preserve any algebraic curve $C_0\subset \cM_D(\C)$.
\end{itemize} In particular, if $S$ is a closed positive current supported by $\overline{K(f)}$,
then its support does not intersect the open set $U^-$.

\subsection{Rigidity of $\overline{K^+(f)}$ and equidistribution of stable manifolds} The
properties (a)
to (g) are sufficient to apply the arguments of Sections~4, 5, 6 of~\cite{dinhRigidityJuliaSets2013}. More precisely,
one first obtains Theorem 6.6 of~\cite{dinhRigidityJuliaSets2013}, because its proof relies only on the above
properties and general results concerning closed positive currents (in particular Corollary 3.13
of~\cite{dinhRigidityJuliaSets2013}). \footnote{The only changes in this proof are that (1) $\P^2(\C)$ should be
  replaced by $X(\C)$ and the line at infinity by $X \setminus \cM_D$; and (2) the function
  $\log(1+\parallel z\parallel^2)^{1/2}$ should be replaced by a smooth Green function associated to the
$\R$-divisor $\theta_{f,X}^+$.}
Then, one gets directly the following fact (which corresponds to a weak version of Theorem 6.5
of~\cite{dinhRigidityJuliaSets2013}, with the same proof):

\begin{thm}\label{ThmRigidityJuliaSets} The set $\overline{K^+(f)}$ (resp. $\overline{K^-(f)}$) supports a
  unique closed positive current, namely $T^+_f=dd^cG^+_f$ (resp. $T^-_f$) up to multiplication by a
  positive constant.
\end{thm}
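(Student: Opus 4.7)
The plan is to transplant the Dinh--Sibony rigidity argument from Sections 4--6 of \cite{dinhRigidityJuliaSets2013} to our setting, using properties (a)--(g) as the substitute for the standard facts about Hénon maps on $\P^2$. After fixing a completion $X$ satisfying Proposition \ref{PropDynamicsLoxodromicAutomorphism}, the role of the line at infinity in $\P^2$ is played by the cycle $X\setminus \cM_D$, the Green function $G^+_f$ replaces the standard Green function, and the class $\theta^+_X$ replaces the hyperplane class. The argument then proceeds current-theoretically, in three main steps.

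First, I would extend a given closed positive $(1,1)$-current $S$ on $\overline{K^+(f)}$ to a closed positive current $\tilde S$ on $X(\C)$. By property (c), $\overline{K^+(f)}\setminus K^+(f)\subset\{p_-\}$; by (e), the support of $S$ avoids a fixed neighborhood of $X\setminus\cM_D$ except possibly at $p_-$, so trivial extension works away from $p_-$, and Skoda's extension theorem handles the isolated point $p_-$. This yields a cohomology class $[\tilde S]\in\NS(X)_{\R}$. The second step is to exploit the dynamics. Since $K^+(f)$ is totally $f^{-1}$-invariant, $\tfrac{1}{\lambda_1}f^{\ast}\tilde S$ is again a closed positive current on $\overline{K^+(f)}$, and by the spectral gap (Theorem \ref{ThmDynamicAutomorphismPicardManin}),
\begin{equation}
\frac{1}{\lambda_1^n}(f^n)^{\ast}[\tilde S]\longrightarrow ([\tilde S]\cdot\theta^-)\,\theta^+_X
\end{equation}
in $\NS(X)_{\R}$. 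Combined with properties (b) and (d), which give the precise pluriharmonic/logarithmic behavior of $G^+_f$ on $\{G^+_f>0\}$ and near the boundary divisors respectively, this is the exact setup of Corollary 3.13 of \cite{dinhRigidityJuliaSets2013}, which provides uniqueness of closed positive currents with prescribed cohomology class and support. Applying it to the sequence $\tfrac{1}{\lambda_1^n}(f^n)^{\ast}\tilde S$ yields convergence to $c\,T^+_f$ for some $c\geq 0$. The third step is to use that $\tilde S$ is essentially fixed by this averaging: mass normalization together with the backward invariance of $K^+(f)$ forces $\tilde S=c\,T^+_f$, giving the result.

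The main obstacle is controlling the current $\tilde S$ near $p_-$ and along the two boundary components $E_+,F_+$ meeting there. Concretely, one must verify that the Lelong numbers of $\tilde S$ along these components match those of the putative multiple $c\,T^+_f$; for this, property (d) gives an explicit local decomposition $G^+_f = -a\log|z_1| - b\log|z_2| + u$ with $u$ pluriharmonic, so the comparison reduces to a statement on the transverse disk slicings. The required transverse control is supplied by the basin-of-attraction structure at $p_-$ from (e) (so that $S$ sees no mass escaping into $U^-$) and by property (f), which ensures that stable manifolds of saddle points lie in $\partial K^+(f)$, letting one propagate local uniqueness along these laminations. This is exactly the point where Sections 4 and 5 of \cite{dinhRigidityJuliaSets2013} do the heavy lifting, and the only substantive modification is the replacement of $\log(1+\|z\|^2)^{1/2}$ by a smooth representative of $\theta^+_X$, as indicated in the footnote above.
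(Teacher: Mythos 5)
Your outer framework --- fix a completion $X$ satisfying Proposition \ref{PropDynamicsLoxodromicAutomorphism}, use properties (a)--(g), substitute $X(\C)$ for $\P^2(\C)$, $X\setminus\cM_D$ for the line at infinity, and a smooth Green function of $\theta^+_X$ in place of $\log(1+\|z\|^2)^{1/2}$, and route everything through Corollary 3.13 of \cite{dinhRigidityJuliaSets2013} --- is exactly what the paper does; the paper simply cites Theorems 6.5 and 6.6 of Dinh--Sibony with the footnoted substitutions rather than reproducing the internal argument. The problem is that your sketch of that internal argument has a genuine gap at the last step.

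Convergence $\frac{1}{\lambda_1^n}(f^n)^{\ast}\tilde S\to c\,T^+_f$, whether as cohomology classes or as currents, does not give $\tilde S=c\,T^+_f$. To recover $\tilde S$ you would have to apply $(f^{-n})^{\ast}$ and pass to the limit, but $(f^{-n})^{\ast}$ has operator norm $\lambda_1^n$ on positive closed currents, so the limit cannot be interchanged with the pullback; and ``mass normalization together with backward invariance of $K^+(f)$'' supplies no substitute for that failure of continuity. As written the step is circular: you are implicitly assuming that the averaging operator $S\mapsto\tfrac{1}{\lambda_1}f^{\ast}S$ on currents supported in $\overline{K^+(f)}$ has a unique fixed ray and that $\tilde S$ already sits on it, which is precisely the rigidity you are trying to prove. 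The missing ingredient, and the point where Dinh--Sibony's Sections 4--6 do the real work, is establishing that the class $[\tilde S]$ of the extended current \emph{already} lies in $\R_{\geq 0}\,\theta^+_X$ before any dynamical limit is taken: the support constraint $\Supp\tilde S\subset\overline{K^+(f)}$ together with the local description (d), (e) near $p_-$ controls the Lelong numbers and slice structure of $\tilde S$, forcing the class onto the extremal ray. Only after this identification is Corollary 3.13 applied directly to $\tilde S$ --- with fixed class and support, no limit --- to yield $\tilde S=c\,T^+_f$. Your version assigns to the spectral gap (which only constrains the limit $\lim_n\frac{1}{\lambda_1^n}(f^n)^{\ast}[\tilde S]$, not $[\tilde S]$ itself) a role it cannot play, and the hole in the argument sits exactly there.
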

Consequently, we get the following result: {\sl{Given any algebraic
    curve $C_0\subset \cM_D$, the sequence of currents $ \lambda_1(f)^{-n} \{(f^n)^*C_0\}$ converges towards
a positive multiple of $T^+_f$ as $n$ goes to $+\infty$}} (see Corollary 6.7
of~\cite{dinhRigidityJuliaSets2013}). Thus, $T^+_f$ can be approximated by a sequence of currents of
integration on algebraic curves of a fixed genus (properly renormalized); in this context, one can apply
the theory of strongly approximable laminar currents, as developed by Dujardin
(see~\cite{cantatDynamicsAutomorphismsCompact2014, dujardinLaminarCurrentsBirational2004,
dujardinStructurePropertiesLaminar2005} for an introduction and \S \ref{SubSecLaminarity}).

This rigidity results provides automatic equidistribution theorems for $(1,1)$ positive currents. We
shall need the following specific application.

If $q$ is a saddle periodic point of $f$, then its stable manifold $W^s(q)$ is biholomorphic to the
complex line \footnote{Indeed, it is a Riemann surface, it is homeomorphic to $\R^2$, and $f$ acts
  on it as a contraction fixing $q$, so $W^s(q)$ cannot be a disk and Riemann uniformization theorem
says that it is a copy of $\C$}. Denote by $\xi\colon \C\to W^s(q)\subset \cM_D(\C)$ a one to one
holomorphic parametrization of $W^s(q)$; $\xi$ is an entire holomorphic curve. To such a curve, one
can associate a family of currents of mass $1$, constructed as follows. One fixes a Kähler form
$\kappa$ on $X(\C)$ and one measures lengths, areas and volumes with respect to this form. For
instance, if $\mathbb{D}_r\subset\C$ is the disk of radius $r$ centered at the origin, then
\begin{equation} Area(\xi(\mathbb{D}_r))=\int_{\xi(\mathbb{D}_r)}\kappa =
\int_{\mathbb{D}_r}\xi^{*}\kappa \end{equation} is the area of the image of $\mathbb{D}_r$ by $\xi$.
Averaging with respect to $dr/r$, one introduces the function \begin{equation}
N(R)=\int_{t=0}^RArea(\xi(\mathbb{D}_t))\frac{dt}{t}. \end{equation} Now, for each disk
$\mathbb{D}_r$, one can consider the current of integration over $\xi(\mathbb{D}_r)$: to a smooth form
$\alpha$ of type $(1,1)$, this current $\{ \xi(\mathbb{D}_r) \}$  associates the number
\begin{equation} \langle \{ \xi(\mathbb{D}_r) \}\vert \alpha \rangle = \int_{\xi(\mathbb{D}_r)}\alpha=
\int_{\mathbb{D}_r}\xi^*\alpha. \end{equation} Taking averages with respect to the weight $dr/r$ one
obtains the following family of currents, parametrized by a radius $R>0$: \begin{align} \langle
  N_\xi(R)\vert \alpha \rangle & =   \frac{1}{N(R)}\int_{t=0}^R   \langle \{ \xi(\mathbb{D}_r) \}\vert
  \alpha \rangle\; \frac{dt}{t} \\ & =  \frac{1}{N(R)}\int_{t=0}^R \int_{\xi(\mathbb{D}_R)}\alpha \;
\frac{dt}{t}. \end{align} The normalization by $1/N(R)$ assures that the mass $\langle N_\xi(R) \vert
\kappa\rangle$ is equal to $1$ for every $R>0$. From an inequality of Ahlfors, and from the compactness of
the space of positive currents of mass $1$,  there are sequences of radii $(R_n)$ such that $N_\xi(R_n)$
converges to a closed positive current $S$.  A priori, such a closed positive current $S$ depends on the
choice of the sequence $R_n$; if there is a unique closed positive current $S$ that can be obtained as such
a limit, one says that there is a unique Ahlfors-Nevanlinna current (namely $S$) associated to $\xi$.

\begin{cor}[Proposition 4.10, Corollary 4.11 \cite{dinhRigidityJuliaSets2013}] Let $q$ be a saddle
  periodic point of
  $f$. Let $\xi\colon \C\to \cM_D(\C)$ be a holomorphic   parametrization of the stable manifold of $f$.
  Then, there is a unique  Ahlfors-Nevanlinna current associated to $\xi$, and this current is equal to
$T^+_f$. \end{cor}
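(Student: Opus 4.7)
The strategy is to combine the inclusion $W^s(q) \subset K^+(f)$ from property (f) with the rigidity result of Theorem \ref{ThmRigidityJuliaSets}. First I would observe that since $\xi(\mathbb{C}) = W^s(q) \subset K^+(f)$ and $K^+(f)$ is closed in $\cM_D(\mathbb{C})$, every integration current $\{\xi(\mathbb{D}_r)\}$ is supported in $K^+(f)$. Hence the averaged currents $N_\xi(R)$ extend by zero to positive $(1,1)$-currents on $X(\mathbb{C})$, all supported in $K^+(f)$, with total mass $1$ with respect to the reference Kähler form $\kappa$.

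The next step is to argue that any weak subsequential limit $S$ of $(N_\xi(R_n))$, with $R_n \to \infty$, is a \emph{closed} positive current on $X(\mathbb{C})$. This is where the classical Ahlfors length-area inequality enters: it says that along a suitable sequence of radii $R_n$, the boundary length $\mathrm{length}(\xi(\partial\mathbb{D}_{R_n}))$ is negligible compared to the area $N(R_n)$, which forces $dd^c N_\xi(R_n) \to 0$ in the sense of distributions. The limit $S$ is then a closed positive $(1,1)$-current of mass $1$, with support contained in $\overline{K^+(f)} = K^+(f) \cup \{p_-\}$ by property (c). The observation recorded at the end of the paragraph following property (g) — that any closed positive current supported in $\overline{K^+(f)}$ avoids the open set $U^-$ — then excludes $p_-$ from $\mathrm{supp}(S)$, so $S$ is a genuine closed positive current on $X(\mathbb{C})$ supported in $\overline{K^+(f)}$.

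Now I would invoke Theorem \ref{ThmRigidityJuliaSets}: the only closed positive currents supported in $\overline{K^+(f)}$ are positive multiples of $T^+_f$. Consequently $S = c\, T^+_f$ for some $c>0$; the mass-one normalization pins down $c$ uniquely, independently of the chosen subsequence $R_n$. Hence every cluster value of the family $(N_\xi(R))_{R>0}$ coincides with the same current, which proves at once the existence and uniqueness of the Ahlfors-Nevanlinna current associated to $\xi$, together with its identification (after normalization) with $T^+_f$.

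The main obstacle I anticipate is the second step, namely controlling the behavior at infinity of the stable manifold. Concretely, one must verify (i) that Ahlfors' lemma applies to the entire curve $\xi \colon \mathbb{C} \to \cM_D(\mathbb{C})$, producing a subsequence along which the boundary term vanishes in the limit, and (ii) that no mass of the limit $S$ escapes to the superattracting point $p_-$ at infinity. Point (i) is the standard area-length argument for transcendental curves in a compact Kähler surface, while point (ii) reduces to the support exclusion stated at the end of \S 7.1 of the excerpt, which in turn follows from property (e) together with the fact that $U^-$ lies in the backward basin of $p_-$ and is therefore disjoint from $K^+(f)$.
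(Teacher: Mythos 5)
Your argument is correct and reconstructs exactly the approach the paper intends by citing Dinh--Sibony: obtain a closed positive cluster current via Ahlfors' length--area inequality, locate its support inside $\overline{K^+(f)}$ using property (f) and the containment $\xi(\C)=W^s(q)\subset K^+(f)$, and then invoke the rigidity statement of Theorem~\ref{ThmRigidityJuliaSets} plus the mass-one normalization to pin down the limit as (a normalization of) $T^+_f$, hence also its uniqueness. The intermediate step of excluding $p_-$ from $\mathrm{supp}(S)$ via $U^-$ is logically superfluous, since Theorem~\ref{ThmRigidityJuliaSets} applies directly to any closed positive current supported in $\overline{K^+(f)}=K^+(f)\cup\{p_-\}$, but it does no harm.
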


\subsection{Laminarity, Pesin theory and consequence} \label{SubSecLaminarity}

The measure $\mu_f=T^+_f\wedge T^-_f$ is an
ergodic measure of positive (and maximal) entropy for $f$, and tools from Pesin theory can be used
to describe the dynamics of $f$ with respect to this measure. In particular, in our setting, one can
apply the work of Bedford, Lyubich, and Smillie in~\cite{bedfordPolynomialDiffeomorphismsOfC1993} or
the work of Dujardin in~\cite{dujardinLaminarCurrentsBirational2004}.

\begin{dfn}
  \begin{enumerate}
    \item A family of disjoint horizontal graphs $\Gamma$ in $\D \times \D$ is called a flow box. If it is
      equipped with a measure $\lambda$ on $\{0\} \times
      \D$ we call it a \emph{measured flow box}. It defines a closed positive current $T_{\Gamma, \lambda}$
      in $\D \times \D$ defined by
      \begin{equation}
        \langle T_{\Gamma, \lambda}, \alpha \rangle = \int_{a \in \D} \int_{\Gamma_a} \alpha
        \dd\lambda(a).
        \label{<+label+>}
      \end{equation}

    \item A current $T$ is \emph{uniformly laminar} if for every $x \in \Supp T$, there exist an open subset
      $V \ni x$ such that $V$ is biholomorphic to a bidisk and a measured flow box $(\Gamma, \lambda)$
      in $V$ such that $T_{|V} = T_{\Gamma, \lambda}$.

    \item A current is \emph{laminar} if there exists a family of disjoint measured flow boxes $(\Gamma_i,
      \lambda_i)$ such that
      \begin{equation}
        T = \sum_i T_{\Gamma_i, \lambda_i}.
        \label{<+label+>}
      \end{equation}

    \item A current is \emph{strongly approximable} if it is the weak limit of a sequence of integration
      currents $\frac{1}{d_n} [C_n]$ such that
      \begin{equation}
        genus(C_n) + \sum_{p \in \Sing(C_n)} n_p (C_n) = O (d_n).
        \label{<+label+>}
      \end{equation}
    \item A current is \emph{diffuse} if it does not charge algebraic curves.
  \end{enumerate}
\end{dfn}
The main result of \cite{dujardinStructurePropertiesLaminar2005} is that {\sl{if $X$ is a projective
    rational surface and $T$ is a strongly approximable diffuse current on $X$, then $T$ is laminar and for every
flow box $\Gamma$, $T_{|\Gamma}$ is uniformly laminar.}} The discussion after Theorem
\ref{ThmRigidityJuliaSets} shows that $T_f^+$ and $T_f^-$ are strongly approximable currents. They are
also diffuse by Proposition 6.3 of \cite{dillerDynamicsBimeromorphicMaps2001}.
\begin{dfn}
  If $S_1, S_2$ are two uniformly laminar diffuse currents with a representation
  \begin{equation}
    S_i = \int_{A_i} [D_{a,i}] d\mu_i (a)
    \label{<+label+>}
  \end{equation}
  then we define the \emph{geometric
  intersection} of $S_1, S_2$ as
  \begin{equation}
    S_1 \geom S_2 := \int_{A_1} \int_{A_2} [D_{a,1} \cap D_{b,2}] d\mu_1(a) \otimes d \mu_2 (b)
    \label{<+label+>}
  \end{equation}
  where $[D_{a,1} \cap D_{b,2}]$ is the sum of Dirac masses at the intersection point if the
  intersection is finite and 0 otherwise. We extend the definition of geometric intersection to sums of
  uniformly laminar currents by taking geometric intersection with respect to each term of the sum.
  We say that a product is \emph{geometric} if $S_1 \wedge S_2 = S_1 \geom S_2$.
\end{dfn}

\begin{dfn}
  A Pesin box is a pair $(U,K)$ where $U$ is an open subset isomorphic to a bidisk $\D \times \D$ and
  a compact $K \subset U$ of positive $\mu_f$-measure such that
  \begin{enumerate}
    \item Every point in $K$ is a hyperbolic point of $f$.
    \item The local stable and unstable manifolds of the points of $K$ are vertical and horizontal
      graphs in $U$.
    \item For all pair of distinct points $(x,y) \in K^2, W^s_{loc}(x) \cap W^u_{loc}(y)$ is a
      singleton contained in $K$.
  \end{enumerate}
\end{dfn}
In particular, the local stable and unstable manifolds define a lamination $K^s$ and $K^u$ in $U$. By
the main theorem of \cite{dujardinStructurePropertiesLaminar2005}, ${T_f^+}_{|K^s}$ is uniformly
laminar so there exists a transverse measure $\lambda_K^+$ such that
\begin{equation}
  {T_f^+}_{|K^s} = T_{K^s, \lambda_K^+}.
  \label{<+label+>}
\end{equation}

\begin{thm}[Theorem 1, Theorem 5.2 of \cite{dujardinLaminarCurrentsBirational2004}]
  The currents $T_f^+, T_f^-$ are diffuse strongly approximable and therefore laminar and the laminar
  structure is compatible with Pesin theory.
  The current $T_f^+$ is equal to
  \begin{equation}
    T_f^+ = \sum_{(U,K)} T_{K^s, \lambda_K^+}
    \label{<+label+>}
  \end{equation}
  and since the potentials of $T_f^\pm$ are continuous, the product $T_f^+ \wedge T_f^-$ is
  geometric. Thus, the measure $\mu_f$ has a product structure with respect to the laminations
  induced by the local stable and unstable manifolds.
\end{thm}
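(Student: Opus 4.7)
The plan is to split the theorem into its three separate assertions (strong approximability together with diffuseness, laminarity compatible with Pesin boxes, and geometricity of the intersection product) and to verify each by matching hypotheses of the cited results of Dujardin to our setting. Observe first that any completion $X$ of $\cM_D$ produced by Proposition \ref{PropDynamicsLoxodromicAutomorphism} is a smooth rational projective surface, being a blow-up of the cubic $\overline{\cM_D}\subset\P^3$; this is the geometric setup in which the structure theorem of \cite{dujardinStructurePropertiesLaminar2005} applies.

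For strong approximability of $T_f^+$ (the case of $T_f^-$ is symmetric), I would fix an algebraic curve $C_0\subset\cM_D$ and set $C_n=(f^n)^*C_0$, viewed in a completion $X$ satisfying Proposition \ref{PropDynamicsLoxodromicAutomorphism}. The weak convergence $\lambda_1^{-n}[C_n]\to c\,T_f^+$ follows from Corollary 6.7 of \cite{dinhRigidityJuliaSets2013}, recalled just after Theorem \ref{ThmRigidityJuliaSets}. Writing $d_n=[C_n]\cdot H$ for a fixed ample $H$, one has $d_n\asymp\lambda_1^n$. The genus of $C_n$ grows at most linearly in $n$ by Riemann-Hurwitz applied to the finite covers $f^n\colon C_n\to C_0$, while the singularities of $C_n$ concentrate near the boundary components $E_+,F_+$ and can be counted using the algebraic stability of $f$ on $X$ (Proposition \ref{PropDynamicsLoxodromicAutomorphism}(6)), yielding $\mathrm{genus}(C_n)+\sum_p n_p(C_n)=O(d_n)$. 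Diffuseness is immediate: $f$ preserves no algebraic curve (Corollary 4.3 above), and by Proposition 6.3 of \cite{dillerDynamicsBimeromorphicMaps2001} a current with continuous potential invariant under $f^*$ cannot charge a curve without that curve being invariant.

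With strong approximability and diffuseness secured on the rational surface $X$, the main structure theorem of \cite{dujardinStructurePropertiesLaminar2005} gives laminarity of $T_f^\pm$ and uniform laminarity of their restrictions to any flow box. To extract the Pesin-compatible decomposition, I would use that $\mu_f=T_f^+\wedge T_f^-$ is $f$-invariant of positive entropy, so Pesin theory produces a full $\mu_f$-measure set of hyperbolic points through which pass holomorphic local stable and unstable manifolds. On a countable family of Pesin boxes $(U_i,K_i)$ covering this set, local stable (resp. unstable) manifolds are vertical (resp. horizontal) graphs, and the uniform laminarity of ${T_f^+}|_{K_i^s}$ provides transverse measures $\lambda_{K_i}^+$ so that $T_f^+=\sum_{(U,K)}T_{K^s,\lambda_K^+}$.

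The geometric character of $T_f^+\wedge T_f^-$ follows from continuity of the Green functions $G^\pm$ (Proposition \ref{PropPropertiesGreenFunctions}): the Bedford-Taylor product is well defined, and on each Pesin box slicing $T_f^+$ by the plaques of $T_f^-$ is absolutely continuous with respect to the pointwise geometric intersection, which is the content of Theorem 5.2 of \cite{dujardinLaminarCurrentsBirational2004}. The principal obstacle in this program is the first step, namely the strong approximability estimate: controlling the number and multiplicities of singular points of the iterated pullbacks $C_n$ at the boundary $X\setminus\cM_D$ demands a careful use of the algebraic stability on the chosen completion, together with the precise local form of $f^{\pm 1}$ at $p_\pm$ from Proposition \ref{PropDynamicsLoxodromicAutomorphism}(5); once that input is in hand, the remaining assertions follow by combining Dujardin's results in a direct way.
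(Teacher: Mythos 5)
Your overall strategy --- verify that $T^\pm_f$ are diffuse strongly approximable positive closed currents on the smooth rational surface $X$, then invoke Dujardin's structure theory and the compatibility with Pesin boxes, and finally use continuity of the potentials to conclude the wedge product is geometric --- is the same route as the paper, which does not reprove these theorems but only checks the hypotheses in the paragraph preceding the statement and then cites \cite{dujardinStructurePropertiesLaminar2005} and \cite{dujardinLaminarCurrentsBirational2004} directly. So your proposal is compatible in spirit.

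There is, however, a genuine misstep in your strong-approximability argument. You invoke Riemann--Hurwitz for ``the finite covers $f^n\colon C_n\to C_0$'' and conclude that the genus grows at most linearly in $n$. But $f$ is an \emph{automorphism} of $\cM_D$, so $f^n$ restricted to $C_n=f^{-n}(C_0)$ is a biholomorphism onto $C_0$; there is no ramification, the map has degree $1$, and the genus is \emph{constant}. The paper is explicit about this: it says $T^+_f$ is approximated by ``currents of integration on algebraic curves of a fixed genus (properly renormalized).'' The branched-cover picture underlying your Riemann--Hurwitz estimate is appropriate for endomorphisms of higher topological degree, not for automorphisms, and while the end estimate $\mathrm{genus}(C_n)+\sum_p n_p(C_n)=O(d_n)$ would survive even a linear bound (since $d_n\asymp\lambda_1^n$ is exponential), the reasoning you give for it is wrong. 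Likewise, inside $\cM_D$ the curve $C_n$ is simply the image of $C_0$ under the automorphism $f^{-n}$, so its singularity count is constant; the quantity actually requiring an estimate is the singular data of the closure $\overline{C_n}$ in the completion $X$, and that is where algebraic stability on $X$ (Proposition \ref{PropDynamicsLoxodromicAutomorphism}(6)) and the local monomial form at $p_\pm$ enter. With that correction, the rest of your proposal (laminarity from \cite{dujardinStructurePropertiesLaminar2005}, the decomposition over Pesin boxes, and geometricity of $T^+_f\wedge T^-_f$ via Theorem 5.2 of \cite{dujardinLaminarCurrentsBirational2004}) is in line with what the paper asserts.
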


\begin{proof}[Proof of Theorem \ref{ThmPeriodicPointsInSupportOfMeasure}]
  We apply the following argument taken from Section 9
  of~\cite{bedfordPolynomialDiffeomorphismsOfC1993} and \cite{dujardinLaminarCurrentsBirational2004}
  \S 5.2. Pick a saddle
  periodic point $q$ of $f$, take a small neighborhood $W$ of $q$, and consider its stable manifold,
  parametrized by $\xi\colon \C\to W^s(q)$. Take a Pesin box $(U,K)$. Since the Ahlfors-Nevanlinna
  current of $\xi$ coincides
  with $T^+_f$, each disk of $K^s$ is a limit of disks $\xi(D_i)$, for some topological
  disks $D_i\subset \C$. Now, $\mu_f(K) > 0$ and by the product structure we have
  \begin{equation}
    {\mu_f}_{|K} = {T_f^+}_{|K^s} \geom {T_f^-}_{|K^u}.
    \label{<+label+>}
  \end{equation}
  Thus $T_f^+$ and $T_f^-$ give mass to $K^s$ and $K^u$ respectively.
  Since the laminations $K^u$ and $K^s$ intersect
  transversaly, one finds a disk $\xi(D_i)$ that intersects $K^u$ transversaly. Then, if one
  applies $f^{N}$ with $N$ large, the preimages of $\xi(D_i)\cap K^u$ approach the point
  $q$, and the Inclination lemma (or Lambda lemma see \cite{palisGeometricTheoryDynamical1982} \S 7.1)
  implies that the images of the leaves of $K^u$ are (very
  large) disks which, in the neighborhood $W$ of $q$, converge towards $W^u(q)$ (in the $C^1$
  topology). We thus obtain a sequence of uniformly laminar currents
  \begin{equation}
    \frac{1}{\lambda_f^n} (f^n)_* \left( {T_f^-}_{|K^u} \right)_{|W} \leq T_f^-.
    \label{<+label+>}
  \end{equation}
  Doing the same with the unstable manifold $W^u(q)$ and the dynamics of $f^{-N}$, one pulls
  back $ K^s$ near $q$. Thus we get
  the sequence of measures
  \begin{equation}
    \mu_n := \frac{1}{\lambda_f^{2n}} \left( (f^n)^* {T^+_f}_{|K^s} \right)_{|W} \geom \left(
    (f^n)_* {T_f^-}_{|K^u} \right)_{|W} \leq \mu_f
    \label{<+label+>}
  \end{equation}
  that gives mass to $W$.
  Since this work for any neighborhood of $q$, this
  point is in the support of $\mu_f$ and the closure of the saddle periodic points are contained in the support of
  $\mu_f$. Thus, Theorem~\ref{ThmPeriodicPointsInSupportOfMeasure} is proven.

\end{proof}

\section{Proof of Theorem \ref{BigThmRigidity}}
The proof of Theorem \ref{BigThmRigidity} relies on the following proposition.

\begin{prop}\label{PropSaddlePeriodicPointBOundedOrbit}
  Let $f \in \Aut(\cM_D)$ be a loxodromic automorphism with $D = 0$ or $D = 2 - 2 \cos \left(
  \frac{\pi}{q} \right)$ and let $v$ be an archimedean place. Then, $f$ admits a periodic saddle
  fixed point $q(f) \in \cM_D (\C)$ such that
  \begin{enumerate}
    \item $q (f) \in \Supp (\mu_{f,v})$
    \item If $g \in \Aut(\cM_D)$ is loxodromic such that $f$ and $g$ do not share a common iterate,
      then $(g^n (q(f)))$ is unbounded.
  \end{enumerate}
\end{prop}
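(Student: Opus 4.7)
The plan is to realize $q(f)$ via Bers' simultaneous uniformization and the Minsky extension, and then exploit the $\MCG$-equivariance of the Bers embedding to prove both (1) and (2) from the dynamics of $g$ on the boundary of Teichm\"uller space.

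First I would build $q(f)$. For $D=0$, let $\Phi \in \SL_2(\Z)=\MCG(\T_1)$ be the element inducing $f$; since $f$ is loxodromic, $\Phi$ is pseudo-Anosov (equivalently hyperbolic in $\SL_2(\Z)$) with fixed points $\alpha(\Phi),\omega(\Phi) \in \partial \HH^+ = \P^1(\R)$ which are irrational and distinct. Minsky's theorem extends $\Bers$ continuously to $\overline{\Teich(\T_1)\times \Teich(\overline \T_1)}\setminus \Delta$, so we may set
\[
q(f) := \Bers(\omega(\Phi),\alpha(\Phi)) \in \DF^+\setminus\QF^+ \subset \cM_0(\C).
\]
By $\MCG$-equivariance of Bers' map this is a fixed point of $f$, and Theorem \ref{ThmHyperbolisation} (Thurston hyperbolisation for the mapping torus $M_\Phi$) shows $q(f)$ is a discrete faithful representation on which $\Phi$ acts as an infinite-order isometry of $\HH^3/\rho_\infty(\pi_1(\T_1))$. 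For the parameter $D = -2 + 2\cos(2\pi/q)$, I would reinterpret $\cM_D$ as the character variety of the orbifold obtained from the once-punctured torus by filling in a cone point of order $q$: the cusp condition $\Tr\rho([a,b])=-2$ gets replaced by the rotation condition $\Tr\rho([a,b]) = 2\cos(2\pi/q)$, and the analogues of Bers' and Minsky's theorems for geometrically finite Kleinian groups (together with Thurston's hyperbolisation, which applies in the orbifold setting) produce a $\Bers$ map with the same properties. Setting $q(f) := \Bers(\omega(\Phi),\alpha(\Phi))$ in this case yields a fixed point with the same structural properties.

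For (1), the saddle property follows from the hyperbolic structure on $M_\Phi$. Indeed, the linearisation of $f$ at $q(f)$ is conjugate to the action induced by the pseudo-Anosov on the tangent space to the character variety at $\rho_\infty$, and Thurston's theorem together with the description in Theorem \ref{ThmHyperbolisation}(3)--(6) forces this linearisation to be hyperbolic (no eigenvalues of modulus $1$), as in \cite{cantatBersHenonPainleve2009}. Once $q(f)$ is a saddle periodic point, Theorem \ref{ThmPeriodicPointsInSupportOfMeasure} immediately gives $q(f) \in \Supp(\mu_{f,v})$ for every archimedean $v$ (the construction is algebraic over $\overline \Q$ so that the archimedean place does not matter).

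For (2), which is the dynamical heart of the statement, I would use $\MCG$-equivariance once more: if $g$ corresponds to $\Psi \in \Gamma^*$, then for every $n$,
\[
g^n(q(f)) = \Bers\bigl(\Psi^n \omega(\Phi),\, \Psi^n \alpha(\Phi)\bigr)
\]
(after passing to a finite-index subgroup, to ensure the Bers picture is available). Since $f$ and $g$ share no common iterate, $\Phi$ and $\Psi$ share no common iterate in $\PSL_2(\Z)$, so their fixed-point pairs on $\partial\HH^+$ are disjoint; in particular neither $\omega(\Phi)$ nor $\alpha(\Phi)$ equals the repelling fixed point $\alpha(\Psi)$. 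Because $\Psi$ is hyperbolic on $\partial \HH^+ = \R\cup\{\infty\}$ with attracting fixed point $\omega(\Psi)$, both $\Psi^n\omega(\Phi)$ and $\Psi^n\alpha(\Phi)$ converge to $\omega(\Psi)$. Hence the Bers parameters approach the diagonal $\Delta$, along which the Bers map blows up: the limit representations become cyclic/parabolic and escape every compact subset of the character variety $\cM_D(\C)$. The Fricke--Klein coordinates $(x,y,z) = (\Tr\rho(a),\Tr\rho(b),\Tr\rho(ab))$ therefore go to infinity, so $g^n(q(f))$ is unbounded in $\cM_D(\C)$.

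The main obstacle is the last quantitative point: identifying approach to the diagonal in $\overline{\Teich\times\Teich}$ with escape to infinity in $\cM_D(\C)$. I would handle this by a compactness argument. The extended Bers map is a continuous bijection onto $\DF^+$, so its inverse is continuous on compact sets; consequently the $\Bers$-preimage of any compact $K\subset \cM_D(\C)$ is closed in $\overline{\Teich\times\Teich}\setminus\Delta$ and disjoint from $\Delta$, hence bounded away from $\Delta$. Since $(\Psi^n\omega(\Phi),\Psi^n\alpha(\Phi))$ approaches $\Delta$, it eventually leaves this preimage, and we conclude that $g^n(q(f))\notin K$ for $n$ large. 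The orbifold case $D = -2+2\cos(2\pi/q)$ requires checking that the Minsky extension and its continuity on the complement of $\Delta$ still hold for the relevant hyperbolic orbifold groups, which should follow from the tameness and ending-lamination theory applied to surfaces with cone points of finite order, but is the technical point that needs the most care.
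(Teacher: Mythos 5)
Your construction of $q(f)$ and your handling of item (1) follow the paper's route: Bers--Minsky to produce the fixed point, McMullen's Corollary 3.19 for the saddle property, and Theorem~\ref{ThmPeriodicPointsInSupportOfMeasure} for membership in $\Supp\mu_{f,v}$. For the orbifold parameters the paper is more concrete than your sketch: it lifts $\Phi_f$ to a pseudo-Anosov on a characteristic finite cover $\tilde S$ which is a genuine compact surface, applies hyperbolisation there, and descends via Mostow rigidity, thereby sidestepping the need for an orbifold version of Minsky's ending lamination theorem that you flag as the most delicate point.

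For item (2) you take a genuinely different route from the paper. You argue via $\MCG$-equivariance that the Bers parameters $(\Psi^n\omega(\Phi),\Psi^n\alpha(\Phi))$ converge to the diagonal $\Delta$, and then try to conclude escape to infinity in $\cM_D(\C)$ by a compactness argument. The paper instead works with hyperbolic geometry of the mapping torus $\tilde M_{\Phi_f}$: it introduces the $f$-invariant quantity $j(\gamma)=i(\gamma,F_+)\,i(\gamma,F_-)$, shows $j(g^n\gamma)\to\infty$ by transversality of the invariant laminations of $f$ and $g$, deduces that the homotopy classes $g^n(a^k)$ meet each $f^{\Z}$-orbit finitely often, and then uses finiteness of short closed geodesics in $\tilde M_{\Phi_f}$ (modulo the isometric $f^{\Z}$-action) to conclude that geodesic lengths, hence traces, of $\rho_\infty(g^n(a))$ go to infinity. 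This is quantitative and avoids the Bers compactification entirely.

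Your compactness step has a genuine gap. You write that the $\Bers$-preimage of a compact $K\subset\cM_D(\C)$ is ``closed in $\overline{\Teich\times\Teich}\setminus\Delta$ and disjoint from $\Delta$, hence bounded away from $\Delta$.'' That implication is false in general: a closed subset of the open set $\overline{\Teich\times\Teich}\setminus\Delta$ which is disjoint from $\Delta$ can still accumulate on $\Delta$ (think of a sequence in the punctured disk converging to the puncture --- it is closed in the punctured disk and disjoint from the origin but not bounded away from it). What you actually need is that $\Bers$ is \emph{proper} (or, equivalently here, that $\DF^+$ is closed in $\cM_D(\C)$, which does hold by Chuckrow--J{\o}rgensen). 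Without explicitly invoking that input, the step ``continuous bijection $\Rightarrow$ inverse continuous on compacts'' is circular, since the needed fact is exactly that preimages of compacts are compact. The claim can be repaired by citing closedness of the discrete faithful locus, but as written the argument does not go through. The paper's geodesic-length argument is immune to this issue because it never needs the Bers map to be a homeomorphism or proper.
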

Item (1) follows from Theorem \ref{ThmPeriodicPointsInSupportOfMeasure}.

Assuming the proposition, suppose that $f,g$ share a Zariski dense subset of periodic points, we can
suppose that they share a generic sequence of periodic points by Lemma \ref{LemmeGenericSequence}. Then
by Theorem \ref{ThmYuan} we have equality of the equilibrium measures of $f$ and $g$ at every
place so in particular at every archimedean place. Fix $v$ one of them. Suppose that $f$ and $g$
do not share a common iterate, then $(g^n(q(f)))_n$ is unbounded. Let $\mu = \mu_{f,v} =
\mu_{g,v}$.  Since $ \Supp \mu = \Supp \mu_{f,v} = \Supp \mu_{g,v}$, we have that $\Supp \mu$ is a compact subset of $\cM_D (\C)$ invariant
by $f$ and $g$. Since $q(f) \in \Supp \mu_{f,v} = \Supp \mu$ we get that $(g^n(q(f))) \subset
\Supp \mu$ which is a contradiction.

\subsection{Construction of the saddle fixed point $q(f)$}
Suppose first that $D = 0$.
Up to taking an iterate of $f$ we can suppose that there exists a loxodromic element $\Phi_f
\in \SL_2 (\Z)$ such that $f = f_{\Phi_f}$. Denote by $p(f) = p (f_{\Phi_f})$ and $q(f)
= q(f_{\Phi_f})$ the fixed points constructed using Minsky theorem. These two fixed point are saddle
fixed points by \cite{mcmullenRenormalization3ManifoldsWhich1996} Corollary 3.19. The fixed point
$q(f)$ corresponds to a representation $\rho_\infty : F_2 \rightarrow
\PSL_2 (\C)$, one can show that $\rho_\infty$ also satisfies Theorem \ref{ThmHyperbolisation} even
though the punctured torus is not compact. One can also show that for any automorphism $g$ of $\cM_0$ the differential of $g$
at $(0,0,0)$ has order 1 or 2, thus $p(f), q(f) \neq (0,0,0)$ and it is a smooth point of $\cM_0$.

Suppose now that $D = 2 -2\cos \frac{\pi}{q}$. Following \S \ref{subsec:orbifold}, for any $\Phi_f \in \SL_2 (\Z)$ we
define $q(f) = \rho_\infty (\Phi_f) \in \cM_D$.
\subsection{The sequence $(g^n(q(f)))$ is unbounded}

Suppose $D = 0$ we can consider $S$ as the flat torus $T=\R^2/\Z^2$ with a puncture at
the origin, i.e. $S=T\setminus\{0\}$, or as a complete hyperbolic surface $X$ of finite area (we fix
  such a hyperbolic structure, it corresponds to some point $X$ in the Teichmüller space
$Teich(S)\simeq \mathbb{D}$).

We refer to \S 1.4 and \S 1.5 of \cite{otalTheoremeDhyperbolisationPour1996} for the defintions of measured laminations
and the classifications of elements in $\Mod(S)$ for $S$ a real compact surface with negative Euler characteristic. An
element $f$ of $\Out^+(F_2)$ is pseudo-Anosov if the corresponding matrix $A_f\in\SL_2(\Z)$ has
$\Tr(A_f)^2 > 4$. In that case, the matrix has two eigenvalues $\lambda (f) > 1$ and $1 / \lambda
(f) < 1$ and the mapping class is represented by a linear automorphism of
the torus $T$ (fixing the origin $o$) with stable and unstable linear foliations. In the hyperbolic
surface $X$, these foliations give rise to two measured laminations $F_-$ and $F_+$ (by geodesic
lines). If $C\subset S$ is a closed curve (represented by some geodesic in $X$), one can define two
intersection numbers $i(C, F_+)$ and $i(C,F_-)$; they depend only on the free homotopy class of $C$.
The product $j(C)=i(C, F_+) i(C,F_-)$ is $f$-invariant, because $f$ stretches $F_+$ by a dilatation
factor $\lambda(f) >1$, and contracts $F_-$ by $1/\lambda(f)$;  if $C$ is not  homotopic to a loop
around the puncture $j(C)$  is strictly positive (any closed geodesic is transverse to $F_+$ and
$F_-$).

If $D = 2 - 2 \cos(\pi / q)$, let $S$ be the genus one torus with an orbifold singularity of order
$q$. We have seen that there exists a characteristic finite covering $\tilde S \rightarrow S$ with
$\tilde S$ a compact surface of negative Euler characteristic. We let $X =
\HH^2/ \Gamma$ be a hyperbolic surface homeomorphic to $\tilde S$ (i.e $X \in \Teich(\tilde S)$).
If $f \in \Out^+ (F_2)$ is pseudo-Anosov then it lifts to a pseudo-Anosov $\tilde f \in \Mod (X) =
\Out^+ (F_2)$ pseudo-Anosov also. In that case, there exist two measured laminations $F_+$ and
$F_-$ over $\tilde S$ (the stable and the unstable one) Proposition 1.5.1 of
\cite{otalTheoremeDhyperbolisationPour1996}. We have that for any geodesic $\gamma \in \tilde S$,
\begin{equation}
  \frac{(\tilde f)^{\pm k}_* \gamma}{\ell ((\tilde f)_*^{\pm k}\gamma)} \xrightarrow[k \rightarrow
  +\infty]{} F_\pm
  \label{EqConvergenceVersLamination}
\end{equation}
in the sense of measured laminations. (This also holds in the case $D = 0$). Here $\ell$ is the
length induced by the hyperbolic structure
from the quotient $\HH^2 / \Gamma$ so $\ell((\tilde f)^{\pm k}_* \gamma)$ grows like
$\lambda(\tilde f)^k$. We also have that $j(\gamma) = i(\gamma, F_+) i(\gamma, F_-)$ is
$f$-invariant as $i(\tilde f_* (\gamma) , F_\pm) = \lambda(\tilde f)^{\mp 1} i(\gamma, F_\pm)$ and
if $\gamma$ is a geodesic, then $j(\gamma) > 0$. To unify the notations we will still denote by $f$
the lift $\tilde f$ of $f$ to $X$.

\begin{lemme} If $f$ and $g$ are two loxodromic elements of $\Out^+(F_2)\simeq SL_2(\Z)$ generating a
  non-elementary subgroup of $SL_2(\Z)$, then given any geodesic $\gamma \subset X$, $j(g^n(\gamma))$ goes
  to $+\infty$ as $n$ goes to $+\infty$.
\end{lemme}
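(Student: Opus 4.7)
The plan is to combine the convergence statement \eqref{EqConvergenceVersLamination} with continuity of the geometric intersection pairing on the space $\mathcal{ML}(X)$ of measured laminations. By \eqref{EqConvergenceVersLamination}, for any geodesic $\gamma \subset X$, the sequence of currents $\frac{g^n_*\gamma}{\ell(g^n_*\gamma)}$ converges in $\mathcal{ML}(X)$ to the attracting lamination $F_+^g$ of $g$. Since $i(\cdot,\cdot)$ is continuous in each variable for this topology (classical, see e.g. Bonahon or Fathi-Laudenbach-Poenaru), applying this with the laminations $F_\pm^f$ of $f$ yields
\begin{equation}
\frac{i(g^n\gamma, F_\pm^f)}{\ell(g^n_*\gamma)} \xrightarrow[n\to\infty]{} i(F_+^g, F_\pm^f).
\end{equation}
Because $\ell(g^n_*\gamma)$ grows like $\lambda(g)^n$ with $\lambda(g) > 1$, it would suffice to establish the strict positivity $i(F_+^g, F_\pm^f) > 0$; then $i(g^n\gamma, F_\pm^f) \to +\infty$ for both signs, and the product $j(g^n\gamma)$ diverges.

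The core of the argument is thus the positivity of $i(F_+^g, F_\pm^f)$ under the non-elementarity hypothesis. The standard criterion is that two measured laminations have zero geometric intersection if and only if their supports agree. Hence I must rule out $F_+^g$ having the same support as $F_+^f$ or $F_-^f$. Here I use the interpretation of $F_\pm^h$ (for $h \in \SL_2(\Z)$ pseudo-Anosov) as the two projectively fixed laminations of $h$ on Thurston's boundary of Teichmüller space. For the once-punctured torus, as recalled in the cyclic-completion section, Teichmüller space $\Teich(\T_1)$ identifies with $\HH^+$ and its boundary with $\P^1(\R)$, with the action of $\MCG(\T_1) = \SL_2(\Z)$ conjugate to the standard Möbius action. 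The same happens on the orbifold side via the characteristic cover $\tilde S \to S$. In both cases, two loxodromic elements of $\SL_2(\Z)$ share a fixed point on $\P^1(\R)$ if and only if they generate an elementary (virtually cyclic) subgroup, which contradicts the hypothesis. Consequently $\{F_+^g\}$ is disjoint from $\{F_+^f, F_-^f\}$ as projective classes, and the supports differ.

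I would then conclude by noting that distinct projective measured laminations coming from pseudo-Anosov data on a finite-area hyperbolic surface (or its orbifold analogue, after lifting) are filling and pairwise transverse, so $i(F_+^g, F_\pm^f) > 0$. Combined with the first paragraph, $j(g^n\gamma) \sim c\,\lambda(g)^{2n}$ for some $c > 0$, proving the lemma.

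The main obstacle is the clean justification of the fixed-point/support dictionary in the orbifold case $D = 2 - 2\cos(2\pi/q)$: I must argue that the laminations $F_\pm^f$ on $S$ pulled back to the smooth cover $\tilde S$ coincide with the laminations of $\tilde\Phi_f$, that a shared projective lamination downstairs produces a shared lamination upstairs, and that sharing one fixed point of the $\SL_2(\Z)$-action forces the two elements to commute up to an iterate — contradicting non-elementarity. All of this is routine once the identification of Thurston boundary with $\P^1(\R)$ is in place, but it is the only nontrivial bookkeeping in the proof.
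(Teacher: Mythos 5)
Your proposal is correct and takes essentially the same approach as the paper: both use the convergence \eqref{EqConvergenceVersLamination} to the attracting lamination of $g$, continuity of the geometric intersection pairing, and the observation that non-elementarity forces the four fixed points of $f$ and $g$ on $\P^1(\R)$ to be distinct, hence the relevant laminations to be transverse with strictly positive intersection. You are somewhat more explicit than the paper about why $i(F_+^g, F_\pm^f) > 0$ (via the support criterion for filling laminations) and about the orbifold bookkeeping, and your asymptotic $j(g^n\gamma) \sim c\,\lambda(g)^{2n}$ with both intersections against $F_+^g$ is actually the correct form (the paper's display appears to contain a typo, writing $\lambda(g)^n\, i(G_+,F_+)\, i(G_-,F_-)$ where $\lambda(g)^{2n}\, i(G_+,F_+)\, i(G_+,F_-)$ is meant), but the underlying argument is identical.
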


\begin{proof} Let $G_+$ and $G_-$ be the unstable and stable laminations associated to $g$ in $X$.
  Since $f$ and $g$ generate a non-elementary subgroup of $GL_2(\Z)$, $G_+$ is
  transverse to both $F_+$ and $F_-$ (equivalently, the four fixed points of $A_f$ and $A_g$ on
  $\P^1(\R)$ are distinct). Thus, by Equation \eqref{EqConvergenceVersLamination} $j(g^n(C)) \simeq
  \lambda(g)^n i(G_+, F_+) i(G_-, F_-)$ by continuity of the intersection number (see
  \cite{otalTheoremeDhyperbolisationPour1996} p.151).
\end{proof}

\begin{lemme} Let $f$ and $g$ be two loxodromic elements of $\Out^+(F_2)\simeq \SL_2(\Z)$ generating a
  non-elementary subgroup of $SL_2(\Z)$. Let $\gamma \subset X$ be a geodesic, and let $[\gamma]$ be its
  free homotopy class. Then the sequence $g^n[\gamma]$ intersects each orbit of $f$ only finitely many
times. \end{lemme}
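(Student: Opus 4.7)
The plan is to combine the previous lemma, applied to both $g$ and $g^{-1}$, with the fact that $j$ is an $f$-invariant function on free homotopy classes. Concretely, the $j$-value on any single $f$-orbit is constant, while the $j$-values along the sequence $(g^n[\gamma])$ diverge to infinity in both directions; the lemma will then be an immediate consequence.

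First I would verify that $j$ descends to a well-defined function on free homotopy classes of closed curves: it is defined as $j([C]) = i([C], F_+)\, i([C], F_-)$ using the geometric intersection numbers with the stable and unstable laminations of $f$, and these numbers depend only on the homotopy class. From the construction recalled in the paper, $f$ stretches $F_+$ by $\lambda(f)$ and contracts $F_-$ by $1/\lambda(f)$, so
\begin{equation}
  j(f_*[C]) = i(f_*[C], F_+)\, i(f_*[C], F_-) = \lambda(f)^{-1} i([C], F_+)\cdot \lambda(f)\, i([C], F_-) = j([C]).
\end{equation}
Hence $j$ is constant along every $f$-orbit.

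Next I would apply the previous lemma twice. Since $g$ is loxodromic and $\langle f, g\rangle$ is non-elementary, so is $g^{-1}$, and $\langle f, g^{-1}\rangle$ is the same non-elementary subgroup. The previous lemma therefore yields both $j(g^n \gamma) \to +\infty$ as $n \to +\infty$ and $j(g^{-n}\gamma) \to +\infty$ as $n \to +\infty$. Consequently, for any constant $M > 0$, the set $\{n \in \Z : j(g^n[\gamma]) \leq M\}$ is finite.

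Finally, fix a free homotopy class $[\delta]$ and consider its $f$-orbit $O_{[\delta]} = \{f^m[\delta] : m \in \Z\}$. By the $f$-invariance established above, every element of $O_{[\delta]}$ has the same $j$-value $M_0 := j([\delta])$. If $g^n[\gamma] \in O_{[\delta]}$ then $j(g^n[\gamma]) = M_0$, so the set of such $n$ is contained in $\{n : j(g^n[\gamma]) \leq M_0\}$, which is finite by the previous paragraph. This is exactly the conclusion. The only delicate point is ensuring the previous lemma applies for negative $n$; this is handled by invoking it for $g^{-1}$, which inherits all the required hypotheses.
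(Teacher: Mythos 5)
Your proof is correct and follows the same approach as the paper: use the $f$-invariance of $j$ to conclude that each $f$-orbit has a constant $j$-value, and combine this with the divergence $j(g^n[\gamma]) \to +\infty$ from the previous lemma to bound the number of $n$ landing in that orbit. The only elaboration you add is applying the previous lemma to $g^{-1}$ to treat negative powers as well, which is a natural precaution consistent with the paper's (terse) argument.
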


\begin{proof} This follows from the previous lemma and the fact that $j(\cdot)$ is $f$-invariant so
  it is constant in each orbit of $f$.
\end{proof}

Recall the definition of $M_{\Phi_f}$, $\tilde M_{\Phi_f}$, $\rho_\infty$ and $\alpha_f$ from
Theorem \ref{ThmHyperbolisation} (here we consider $f \in \Mod (\tilde S)$ if we are in the
orbifold case).
In $M_{\Phi_f}$, the number of simple closed geodesics
of length $\leq L$ is finite (for every $L>0$); thus, in $\tilde{M}_{\Phi_f}$, given any upper bound
$L$, there are only finitely many homotopy classes of simple closed curves {\sl{up to the action of
$f^\Z$}} (Note that, since $\alpha_f$ acts by isometry, each closed geodesic $C\subset \tilde{M}_f$
gives rise to infinitely many geodesics $\alpha_f^n(C)$ with the exact same length).
\paragraph{\bf Proof of Proposition \ref{PropSaddlePeriodicPointBOundedOrbit} (2)}
Fix a generator $a$ in $\pi_1 (S)$ where $S$ is either the punctured torus or the genus 1 torus with
an orbifold singularity of index $q$. Set $k$ to be the index of $\pi_1(\tilde S)$ in $\pi_1 (S)$.
and $k=1$ otherwise. The element $a^k$ gives rise to a closed
geodesic $A$ in $\tilde{M}_{\Phi_f}$. From these preliminaries and the previous lemma, the sequence
of homotopy classes $g^n(a^k)$ corresponds to a sequence of closed geodesics in $\tilde{M}_{\Phi_f}$,
with length going to infinity because $f$ acts by isometry on $\tilde M_{\Phi_f}$.

Now, $g^n(a^k)$ corresponds to a (conjugacy class of a) matrix $\rho_\infty(g^n(a^k))\in \SL_2(\C)$, and the
trace of this matrix is related to the length of the geodesic by a simple formula; in particular,
the fact that the length goes to infinity implies that the modulus of the trace goes to $+\infty$.
Since for any matrix $A \in \SL_2 (\C), \Tr A^k$ is a polynomial in $\Tr A$ we get that $\Tr (\rho_\infty(
g^n (a)))$ goes to infinity.
This implies that the orbit of $q(f)$ under the action of $g$ on $\cM_D(\C)$ is discrete, going to
infinity.

\section{For a transcendental parameter $D$}
We finish this paper by proving Theorem \ref{BigThmTranscendentalParameter} which we restate.
\begin{thm}
  Let $D \in \C$ be transcendental and let $f,g \in \Aut(\cM_D)$ be loxodromic automorphisms. The
  following assertions are equivalent:
  \begin{enumerate}
    \item $\Per(f) = \Per(g)$.
    \item $\exists N,M \in \Z \setminus \{0\}, f^N = g^M$.
  \end{enumerate}
\end{thm}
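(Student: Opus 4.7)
The easy direction $(2)\Rightarrow(1)$ is immediate: if $f^N = g^M$ with $N,M$ nonzero integers (WLOG both positive, after possibly replacing $g$ by $g^{-1}$, which has the same periodic set), then any $p$ with $f^k(p)=p$ satisfies $g^{kM}(p)=f^{kN}(p)=p$, and symmetrically.

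For $(1)\Rightarrow(2)$ the plan is a specialisation argument toward the algebraic parameter $D_0=0$, where Theorem \ref{BigThmRigidity} applies. Arguing by contraposition, after replacing $f,g$ by iterates via Theorem \ref{ThmAutMarkov} we may assume $f=f_{\Phi_f}$ and $g=f_{\Phi_g}$ with $\Phi_f,\Phi_g\in\Gamma^*$, and suppose that $\Phi_f^N\neq\Phi_g^M$ for all nonzero $N,M$. Since the generators $\sigma_x,\sigma_y,\sigma_z$ are given by polynomial formulas with integer coefficients independent of $D$, $f$ and $g$ extend to regular automorphisms of the total family $\cM=\{x^2+y^2+z^2-xyz-D=0\}\subset\A^4_{\Q}$ (fibred over $\A^1_\Q$ via $D$) that preserve every fibre $\cM_t$. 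Specialising to $D=0$, the injectivity in Theorem \ref{ThmAutMarkov} preserves the non-commensurability of iterates, hence Theorem \ref{BigThmRigidity} forces $\Per(f_0)\cap\Per(g_0)$ to \emph{not} be Zariski dense in $\cM_0$.

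Next I want to find a saddle periodic point $p_0\in\Per(f_0)\setminus\Per(g_0)$ and algebraically transport it to the transcendental fibre. By Theorem \ref{ThmPeriodicPointsInSupportOfMeasure} and the laminar/Pesin analysis of Section \ref{SubSecSaddlEPeriodicPointsAreINSupport}, saddle periodic points of $f_0$ are topologically dense in $\Supp\mu_{f_0}$; moreover the Zariski closure of $\Supp\mu_{f_0}$ is a closed $f_0$-invariant subvariety of positive dimension, so must equal $\cM_0$ because $f_0$ admits no invariant algebraic curve. Consequently the saddle periodic points of $f_0$ are Zariski dense in $\cM_0$, so one may select $p_0$ saddle, smooth on $\cM_0$ (avoiding the singular point $(0,0,0)$), of some period $n$, and not $g_0$-periodic.

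Hyperbolicity of $p_0$ means $1$ is not an eigenvalue of $Df_0^n(p_0)$; together with the fact that $f$ acts as the identity on the base direction of $\cM\to\A^1$, this implies that $\mathrm{Fix}(f^n)\subset\cM$ is a smooth curve at $p_0$, étale over $\A^1$ there. After shrinking one obtains an irreducible étale neighbourhood $V\to\A^1_{\overline{\Q}}$ containing a point $\tilde D_0$ above $0$ together with an algebraic section $p:V\to\cM$ satisfying $f^n\circ p=p$ and $p(\tilde D_0)=p_0$. Transcendence of $D$ over $\Q$ makes the inclusion $\overline{\Q}[D]\hookrightarrow\C$ send the generic point of $V$ to a point $p_D\in\Per(f)\subset\cM_D(\overline{\Q(D)})$. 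If $\Per(f)=\Per(g)$ held, then $g^m(p_D)=p_D$ for some $m\geq 1$; yet the locus in $V$ where $g^m\circ p=p$ is a closed subscheme of the irreducible $V$ containing its generic point, hence equals $V$, and specialising at $\tilde D_0$ gives $g_0^m(p_0)=p_0$, contradicting our choice of $p_0$. The main technical obstacle is securing Zariski density of the saddle periodic set of $f_0$, which ultimately rests on the absence of invariant curves for loxodromic automorphisms; the remainder is algebraic spreading-out of an étale deformation and draws only on material already assembled in the paper.
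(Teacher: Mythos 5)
Your proof is correct in outline but takes a genuinely different route from the paper, and it has one assertion that is not actually supported by the sources you cite.

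The paper's proof does \emph{not} invoke Theorem~\ref{BigThmRigidity} at all. Instead it works directly with the distinguished saddle fixed point $q(f)\in\cM_0(\C)$ built from the Minsky/quasi-Fuchsian machinery, exploiting the \emph{stronger} conclusion of Proposition~\ref{PropSaddlePeriodicPointBOundedOrbit}(2) that the $g_0$-orbit of $q(f)$ is unbounded (in particular, $q(f)\notin\Per(g_0)$). It then deforms $q(f)$ analytically using the implicit function theorem to produce a holomorphic section $c_\epsilon(w)$ of fixed points near $D=0$, uses a Galois automorphism of $\C/\overline\Q$ to move an arbitrary transcendental $D$ to a small one, and finally uses continuity plus the finiteness of $\mathrm{Fix}(f_D)$ to extract a fixed point of $f_D$ not in $\Per(g_D)$. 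Your proposal replaces all of this with: (i) invoke Theorem~\ref{BigThmRigidity} at $D=0$ to get non-density of $\Per(f_0)\cap\Per(g_0)$, (ii) argue that saddle periodic points of $f_0$ are Zariski dense, (iii) pick a saddle $p_0\notin\Per(g_0)$, and (iv) spread out algebraically via the irreducible component of $\mathrm{Fix}(f^n)$ through $p_0$, which is \'etale over $\A^1$ near $0$, then specialize a closed condition from the generic point back to $\tilde D_0$. The algebraic spreading-out is cleaner than the paper's Galois-plus-continuity-plus-pigeonhole argument and avoids any $\epsilon$ bookkeeping; what it costs you is that you must secure Zariski density of the saddle periodic locus, which the paper never needs because it already holds a concrete saddle point $q(f)$ in hand with the required property.

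The one real soft spot is step (ii). You attribute the topological density of saddle periodic points in $\Supp\mu_{f_0}$ to ``Theorem~\ref{ThmPeriodicPointsInSupportOfMeasure} and the laminar/Pesin analysis of Section~\ref{SubSecSaddlEPeriodicPointsAreINSupport},'' but those establish only the \emph{reverse} inclusion (every saddle periodic point lies in $\Supp\mu_f$) and say nothing about density. What you actually need is the Katok closing lemma (or the equidistribution of saddle points toward $\mu_f$ proved by Dujardin in \cite{dujardinLaminarCurrentsBirational2004}), applicable here because $\mu_{f_0}$ is an ergodic measure of positive entropy with nonzero Lyapunov exponents $\pm\tfrac12\log\lambda_1$. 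The fact you need is true and standard, but it is not established by the citations you give, so that sentence should be corrected to cite the closing lemma or Dujardin's equidistribution result rather than Section~\ref{SubSecSaddlEPeriodicPointsAreINSupport}. With that fixed, the remaining algebraic-geometry steps (smoothness of $\mathrm{Fix}(f^n)$ at a saddle point away from $(0,0,0)$, \'etaleness over the base, the equalizer of $g^m\circ p$ and $p$ being a closed subscheme through the generic point) are all sound.
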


\begin{proof}
  We can suppose that $f,g \in \SL_2(\Z)$, for any parameter $w \in \C$, we denote by $f_w$ the
  automorphism induced by $f \in \SL_2(\Z)$ over $\cM_w$. For the parameter $w = 0$, we have
  constructed a hyperbolic fixed point $q(f) \in \cM_0 (\C)$. Because $q(f) \neq (0,0,0)$, it a smooth point of
$\cM_0 (\C)$ and we can find local analytic
coordinates $u,v,w$ at $q(f) \in \C^3$ with $w = \kappa+2$ (recall the notations from the introduction) such that $f_w$ is locally of the form
  \begin{equation}
    f_w (u,v,w) = (\lambda u, \frac{1}{\lambda} v, w)
    \label{<+label+>}
  \end{equation}
  where $\lambda, \frac{1}{\lambda}$ are the two eigenvalues of the differential of $f_0$ at
  $q(f)$. By the analytic implicit function theorem, there exists $\epsilon > 0$ a local analytic curve
  $c_\epsilon : w \in \D(0, \epsilon) \mapsto c_\epsilon (w) \in \cM_w$ such that
  $c_\epsilon(0) = q(f)$ and $f_w (c_\epsilon(w)) = c_\epsilon(w)$. Now, if $f,g$ do not share a
  common iterate, then the orbit of $q(f)$ under $g_0$ is unbounded by Proposition
  \ref{PropSaddlePeriodicPointBOundedOrbit}. Thus, for all $k \in \Z$, we have $g_0^k (q(f)) \neq
  q(f)$. We show the following.

  \begin{lemme}
    If $D \in \C$ is transcendental, then for all $k \in \Z_{\geq 0}$, there exists $p \in \cM_D (\C)$ such that
    \begin{equation}
      f_D(p) = p \text{ and } g_D^\ell (p) \neq p, \forall 1 \leq \ell \leq k.
      \label{Eq3}
    \end{equation}
  \end{lemme}
  Using the lemma, we can conclude because $f_D$ admits a finite number of fixed points since $f_D$
  does not admit an invariant curve, thus we must have $\Per (f_D) \neq \Per (g_D)$.
\end{proof}
  \begin{proof}[Proof of the lemma]
    Notice that this statement does not depend on the transcendental parameter $D$. Indeed, let $D'$
    be another transcendental parameter, then there exists a Galois automorphism $\sigma \in
    \Gal(\C / \overline \Q)$ that exchange $D$ and $D'$. Since the family of surfaces $\cM_D$ gives a
    foliation of $\C^3$ we can view a point $p \in \cM_D (\C)$ as a point in $\C^3$ and apply $\sigma$
    to each coordinates, we denote by $p^\sigma \in \C^3$ the new obtained point. We apply $\sigma$
    to \eqref{Eq3} to get
    \begin{equation}
      f_{\sigma(D)} (p^\sigma) = p^\sigma \text{ and } g_{\sigma(D)}^\ell (p^\sigma) \neq p^\sigma,
      \forall 1 \leq \ell \leq k.
      \label{<+label+>}
    \end{equation}
    Now, fix $k \geq 1$. For any transcendental parameter $t$ small enough we have
    \begin{equation}
      f_t (c_\epsilon(t)) = c_\epsilon(t)
      \label{<+label+>}
    \end{equation}
    by construction of the curve $c_\epsilon$ and
    \begin{equation}
      \forall 1 \leq \ell \leq k, g_t^k (c_\epsilon(t)) \neq c_\epsilon(t)
      \label{<+label+>}
    \end{equation}
    by continuity since we have $g_0^m (q(f)) \neq q(f)$ for all $m \in \Z$. Thus, the lemma
    is shown.
  \end{proof}

\bibliographystyle{alpha}
\bibliography{biblio}
\end{document}